\theoremstyle{Theorem}
\newtheorem{theorem}{Theorem} [section]
\newtheorem{alttheorem}{Theorem} 
\newtheorem{proposition}[theorem]{Proposition} 
\newtheorem{claim}[theorem]{Claim} 
\newtheorem{lemma}[theorem]{Lemma}
\newtheorem{conjecture}[theorem]{Conjecture}
\theoremstyle{definition}
\newtheorem{definition}[theorem]{Definition}
\newtheorem{example}[theorem]{Example}
\theoremstyle{remark}
\newlist{hyp}{enumerate}{1} 
\setlist[hyp]{label*=\emph{(H\arabic*)},ref= {(H\arabic*)},  resume}
\newlist{enumlemma}{enumerate}{3} 
\setlist[enumlemma]{label*=\emph{(\alph*)}, ref= {(\alph*)} }
\newcommand{\Sl}{\mathrm{SL}}
\newcommand{\SL}{\mathrm{SL}}
\newcommand{\restrict}[2]{{#1}{\restriction_{{ #2}}}}
\newcommand{\td}{\tilde}
\newcommand{\wtd}{\widetilde}
\newcommand{\diff}{\mathrm{Diff}}
\newcommand{\Diff}{\diff}
\newcommand{\Fol}{\mathcal{F}}
\newcommand{\Gol}{\mathcal{G}}
\def\calL{\mathcal L}
\def\scrW{\mathcal W}
\def \RP{\R P}
\newcommand{\sm}{\smallsetminus}
\newcommand{\R}{\mathbb {R}}
\newcommand{\Z}{\mathbb {Z}}
\newcommand{\N}{\mathbb {N}}
\newcommand{\inv}{^{-1}}
\def\ae{a.e.\ }
\def\as{a.s.\ }
\def\bs{\backslash}
\newcommand{\hol}[1][\beta]{ \, \mathrm{H\ddot{o}l}^{{#1}}}
\def\Hol{\hol}
\def\Lip{\mathrm{Lip}}
\def\lip{\Lip}
\newcommand{\lieg}{\mathfrak g}
\newcommand{\lieh}{\mathfrak h}
\newcommand{\liek}{\mathfrak k}
\newcommand{\liem}{\mathfrak m}
\newcommand{\lien}{\mathfrak n}
\newcommand{\liea}{\mathfrak a}
\newcommand{\lie}{\mathrm{Lie}}
\def\Lie{\lie}
\newcommand{\liep}{\mathfrak p}
\newcommand{\lieq}{\mathfrak q}
\renewcommand{\bar}{\overline}
\title
[Invariant measures and measurable projective factors]{Invariant measures and measurable projective factors for actions of higher-rank lattices on manifolds}
\author[A.~Brown]{Aaron Brown}
\address{University of Chicago, Chicago, IL 60637, USA}
\email{awb@uchicago.edu}
\author[F.~Rodriguez Hertz]{Federico Rodriguez Hertz}
\address{Pennsylvania State University, State College, PA 16802, USA}
\email{hertz@math.psu.edu}
\author[Z.~Wang]{Zhiren Wang}
\address{Pennsylvania State University, State College, PA 16802, USA}
\email{zhirenw@psu.edu}
\long\def\symbolfootnote[#1]#2{\begingroup\def\thefootnote{\fnsymbol{footnote}}\footnote[#1]{#2}\endgroup}
\newcommand\Ts{\rule{0pt}{2.6ex}}       
\newcommand\Bs{\rule[-1.2ex]{0pt}{0pt}} 
\def\DynkinNodeSize{1.8mm}
\def\DynkinArrowLength{1.7mm}
\tikzset{
  dnode/.style={
    circle,
    inner sep=0pt,
    minimum size=\DynkinNodeSize,
    fill=white,
    draw},
  middlearrow/.style={
    decoration={markings,
      mark=at position 0.6 with
      {\draw (0:0mm) -- +(+135:\DynkinArrowLength); \draw (0:0mm) -- +(-135:\DynkinArrowLength); \draw (0:0mm) -- +(+315:.2pt); },
    },
    postaction={decorate}
  },
  leftrightarrow/.style={
    decoration={markings,
      mark=at position 0.999 with
      {
      \draw (0:0mm) -- +(+135:\DynkinArrowLength); \draw (0:0mm) -- +(-135:\DynkinArrowLength);
      },
      mark=at position 0.001 with
      {
      \draw (0:0mm) -- +(+45:\DynkinArrowLength); \draw (0:0mm) -- +(-45:\DynkinArrowLength);
      },
    },
    postaction={decorate}
  },
  sedge/.style={
  },
  dedge/.style={
    middlearrow,
    double distance=0.5mm,
  },
  tedge/.style={
    middlearrow,
    double distance=1.0mm+\pgflinewidth,
    postaction={draw}, 
  },
  infedge/.style={
    leftrightarrow,
    double distance=0.5mm,
  },
}
\def\ADYNK{
\begin{tikzpicture}[scale=\scales]
\useasboundingbox (0,-.7) rectangle (3.5,.5);
    \node[dnode,label=below:$\alpha_1$] (1) at (0,0) {};
    \node[dnode,label=below:$\alpha_2$] (2) at (1,0) {};
    \node[dnode,label=below:$\alpha_{\ell-1}$] (3) at (2.5,0) {};
    \node[dnode,label=below:$\alpha_\ell$] (4) at (3.5,0) {};
    \path (1) edge[sedge] (2)
          (2) edge[sedge,dashed, dash phase=1.4pt] (3)
          (3) edge[sedge] (4);
 \end{tikzpicture}
}
\def\BDYNK{
 \begin{tikzpicture}[scale=\scales]
\useasboundingbox (0,-.7) rectangle (3.5,.5);
\path           (2.45,0) edge[dedge] (3.55,0);
    \node[dnode,label=below:$\alpha_1$] (1) at (0,0) {};
    \node[dnode,label=below:$\alpha_2$] (2) at (1,0) {};
    \node[dnode,label=below:$\alpha_{\ell-1}$] (3) at (2.5,0) {};
    \node[dnode,label=below:$\alpha_\ell$] (4) at (3.5,0) {};

    \path (1) edge[sedge] (2)
          (2) edge[sedge,dashed, dash phase=1.4pt] (3)
          ;
\end{tikzpicture}
}
\def\CDYNK{
\begin{tikzpicture}[scale=\scales]
\useasboundingbox (0,-.7) rectangle (3.5,.5);
\path        (3.55,0) edge[dedge]    (2.45,0);

    \node[dnode,label=below:$\alpha_1$] (1) at (0,0) {};
    \node[dnode,label=below:$\alpha_2$] (2) at (1,0) {};
    \node[dnode,label=below:$\alpha_{\ell-1}$] (3) at (2.5,0) {};
    \node[dnode,label=below:$\alpha_\ell$] (4) at (3.5,0) {};

    \path (1) edge[sedge] (2)
          (2) edge[sedge,dashed, dash phase=1.4pt] (3)
          ;
\end{tikzpicture}}
\def\BCDYNK{
\begin{tikzpicture}[scale=\scales]
\useasboundingbox (0,-.7) rectangle (3.5,.5);
\path        (2.45,0) edge[dedge]    (3.55,0);
    \node[dnode,label=below:$\alpha_1$] (1) at (0,0) {};
    \node[dnode,label=below:$\alpha_2$] (2) at (1,0) {};
    \node[dnode,label=below:$\alpha_{\ell-1}$] (3) at (2.5,0) {};
    \node[dnode,label=below:$\alpha_\ell$] (4) at (3.5,0) {};

    \path (1) edge[sedge] (2)
          (2) edge[sedge,dashed, dash phase=1.4pt] (3)
          ;
\end{tikzpicture}
}
\def\DDYNKup{
\begin{tikzpicture}[scale=\scales]

\useasboundingbox (0,-1.0) rectangle (3.5,1.25);

    \node[dnode,label=below:$\alpha_1$] (1) at (0,0) {};
    \node[dnode,label=below:$\alpha_2$] (2) at (1,0) {};
    \node[dnode,label=below:$\alpha_{\ell-2}$] (4) at (2.5,0) {};
    \node[dnode,label=above:$\alpha_{\ell-1}$] (5) at (3.5,0.5) {};
    \node[dnode,label=below:$\alpha_\ell$] (6) at (3.5,-0.5) {};

    \path (1) edge[sedge] (2)
          (2) edge[sedge,dashed, dash phase=1.4pt] (4)
          (4) edge[sedge] (5)
              edge[sedge] (6);
\end{tikzpicture}
}
\def\ssizl{14em}
\def\DynkTablePosRoots{
\begin{table}[h]
\footnotesize
\def\scales{.7}
\caption{Roots systems and positive roots for classical root systems}\label{table:1}
\begin{center}
\begin{tabular}{|c|m{83pt} |l|}
\hline
& \begin{tabular} {c}Simple roots and \\Dynkin diagram \end{tabular} &   \begin{tabular} {c}Positive roots \end{tabular} 
\\    \hline

$A_\ell$&

\begin{tabular}{c}
\ADYNK\\
 $\alpha_i = e_i - e_{i+1}$  \\      {\footnotesize $  1\le i \le \ell$}\Bs
\end{tabular}
&
\begin{tabular}{m{\ssizl} l  }
$\alpha_i +   \dots + \alpha_{k} = e_i - e_{k+1} $  & {\footnotesize $ 1\le i< k\le \ell $} 
%
 \end{tabular}
 \\		\hline
$B_\ell$&
 \begin{tabular}{c}
\BDYNK\\
     $\alpha_i = e_i - e_{i+1},$    \\
         {\footnotesize $  1\le i \le \ell-1$};\\     $ \alpha_\ell = e_\ell$\Bs
     \end{tabular}   
&
\begin{tabular}{m{\ssizl} l  }\Ts 
 $\alpha_i +  \dots + \alpha _{k} = e_i - e_ {k+1} $ &  {\footnotesize $1\le i \le  k \le \ell-1$}\\
$\alpha_i +  \dots + \alpha _{\ell} = e_i   $&  {\footnotesize$1\le i \le \ell $}\\ 
$\alpha_i +\dots  +\alpha _{k} +    2 \alpha _{k+1} + \dots  $&  {\footnotesize $1\le i \le k < \ell$}\\
\hfill $   + 2 \alpha _\ell   = e_i + e_ {k+1}$\Bs
\end{tabular}
\\      \hline
$C_\ell$&
  \begin{tabular}{c}
\CDYNK
\\
          $\alpha_i = e_i - e_{i+1},$   \\           {\footnotesize $  1\le i \le \ell-1$};\\
      $ \alpha_\ell = 2e_\ell$\Bs
\end{tabular}
&
\begin{tabular}{m{\ssizl} l  }$\alpha_i +  \dots + \alpha _{k} = e_i - e_ {k+1}$ & {\footnotesize $1\le i \le  k \le \ell-1$}\\
$\alpha_i +\dots  +\alpha _{k} +    2 \alpha _{k+1} + \dots 
$& {\footnotesize $1\le i \le  k < \ell$}\\
\hfill $   + 2 \alpha _{\ell-1} + \alpha _\ell= e_i + e_ {k+1}$ &\\
$2\alpha_i +\dots   + 2 \alpha _{\ell-1} + \alpha _\ell= 2e_i $ & {\footnotesize $1\le i   \le \ell$}
\end{tabular}
\\		\hline
$BC_\ell$&
  \begin{tabular}{c}
\BCDYNK\\
          $\alpha_i = e_i - e_{i+1},$   \\
      {\footnotesize $  1\le i \le \ell-1$};\\
     $ \alpha_\ell = e_\ell$ \Bs
\end{tabular}
&
\begin{tabular}{m{\ssizl} l  }$\alpha_i +   \dots + \alpha _{k} = e_i - e_ {k+1}$ & {\footnotesize $1\le i \le k \le \ell-1$}\\
$\alpha_i +  \dots + \alpha _{\ell} = e_i   $& {\footnotesize $1\le i \le \ell $}\\ 
$\alpha_i +\dots  +\alpha _{k} +    2 \alpha _{k+1} + 
 \dots $ & {\footnotesize $1\le i \le k < \ell$}\\
\hfill  $  + 2 \alpha _\ell = e_i + e_ {k+1}$\\
$2\alpha_i +\dots   + 2 \alpha _{\ell-1} +2 \alpha _\ell= 2e_i $ & {\footnotesize $1\le i   \le \ell$}
\\
\end{tabular}
\\ 		\hline
$D_\ell$&
 \begin{tabular}{c}
\DDYNKup \\
          $\alpha_i = e_i - e_{i+1},$  
    \\
      {\footnotesize $  1\le i \le \ell-1$};\\
        $ \alpha_\ell = e_{\ell-1} + e_\ell$\Bs 
\end{tabular}
&
\begin{tabular}{m{\ssizl} l  }
$\alpha_i$ &$1\le i\le \ell$\Ts \\
$\alpha_i  + \dots + \alpha _{k}= e_i - e_ {k+1}$ & {\footnotesize $1\le i < k \le \ell-2$}\\
$\alpha_i +   \dots + \alpha _{\ell-2}+ \alpha _{\ell-1} = e_i - e_ {\ell}$ & {\footnotesize $1\le i  \le \ell-2$}\\
$\alpha_i +   \dots + \alpha _{\ell-2}+ \alpha _\ell = e_i + e_ {\ell}$ & {\footnotesize $1\le i  \le \ell-2$}\\
$\alpha_i +   \dots + \alpha _{\ell-1}+ \alpha _\ell = e_i + e_ {\ell-1}$ & {\footnotesize $1\le i  \le \ell-2$}\\
$\alpha_i +\dots  +\alpha _{k} +    2 \alpha _{k+1} + 
 \dots $  & {\footnotesize  $1\le i \le  k <\ell-2$}\\
\quad \quad\quad $ + 2 \alpha _{\ell-2}  + \alpha _{\ell-1} + \alpha _\ell $\\ \quad \quad\quad $ = e_i + e_ {k+1}$\Bs 
\end{tabular}
%
%
\\		 \hline
\end{tabular}
\end{center}
\label{default}
\end{table}
}
\begin{document}
 
\begin{abstract}
We consider smooth actions of lattices in higher-rank semisimple Lie groups on manifolds.  We define two numbers  $r(G)$ and $m(G)$ associated with the roots system of the Lie algebra of a Lie group $G$.   If the dimension of the manifold is smaller than $r(G)$, then we show the action preserves a Borel probability measure.   If the dimension of the manifold is at most $m(G)$, we  show there is a quasi-invariant measure on the manifold such that the action is measurable isomorphic to a relatively measure preserving action over a standard boundary action.  
\end{abstract}
\maketitle

\section{Introduction and statement of results}\label{sec:1} 
In this paper we consider lattices  $\Gamma$ in  higher-rank Lie groups $G$ acting by $C^{1+\text{H\"older}}$ diffeomorphisms on   compact manifolds.  The \emph{Zimmer program} refers to a number of questions and conjectures related to such actions.  It is expected that all such actions   are constructed from algebraic examples.  In particular, if the dimension of $M$ is smaller than the dimension of all possible algebraic actions, \emph{Zimmer's conjecture} asserts that all actions factor through the action of a finite group.  See   \cite{1608.04995} for recent solution to  (non-volume-preserving case of) Zimmer's conjecture for cocompact lattices in split, simple Lie groups.  

The main results of this paper concern actions of lattices in low dimensions. 
Most rigidity results in the literature concerning actions of lattices in low dimensions require additional hypotheses such as the  preservation of a Borel probability measure (see \cite{MR2219247,MR1946555}), strong regularity assumptions of the action (see \cite{MR1666834}), or extremely low dimensions  (see \cite{MR1198459,MR1911660, MR1703323} for actions on the circle and  \cite{MR2219247,MR1946555} for actions on surfaces.) 
Our focus in this paper is to establish the existence of an invariant measure for   actions   in moderately low dimensions and with low differentiability.
In particular, 
in  Theorem \ref{thm:invmeas} we show that if the dimension of $M$ is sufficiently small relative to  algebraic data associated to a simple Lie group $G$, then for any lattice $\Gamma\subset G$, any  $C^{1+\beta}$-action $\alpha \colon \Gamma\to \diff^{1+\beta}(M) $ preserves a Borel probability measure.  The critical dimension below which we are guaranteed an invariant probability is precisely the critical dimension in the non-volume-preserving case of  Zimmer's conjecture for split, simple Lie groups.  In the case that $\Gamma$ is cocompact, Theorem \ref{thm:invmeas} follows immediately from the main result of  \cite{1608.04995}; on the other hand,  the proof of the main result of \cite{1608.04995} uses many of the ideas used to prove  Theorem \ref{thm:invmeas}, particularly our Proposition \ref{thm:nonresonantimpliesinvariant} below.    Theorem \ref{thm:invmeas} moreover holds for actions of nonuniform lattices whereas  Zimmer's conjecture has yet to be verified for nonuniform lattices.  

The second main result, Theorem \ref{thm:measconj}, concerns actions $\alpha \colon \Gamma\to \diff^{1+\beta}(M)$ on manifolds $M$ of certain intermediate dimensions.  This range of dimensions includes examples where there   exist non-trivial (volume-preserving) actions as well as examples of actions that do not preserve any Borel probability measure.  In this case, we show that there exists a quasi-invariant measure $\mu$  on $M$ such that the action on $(M,\mu)$ is measurably isomorphic to a relatively measure-preserving extension over a standard projective action.

Given an action $\alpha \colon \Gamma\to \diff^{1+\beta}(M)$,  the key idea in both theorems is to consider the $G$-action induced by $\alpha$ on an auxiliary space which we denote by  $M^\alpha.$  We take $P\subset G$ to be a minimal parabolic subgroup and consider $P$-invariant measures $M^\alpha$.  
  This  approach should be compared with a number of papers by Nevo and Zimmer, particularly \cite{MR1720183,MR1933077}.  Nevo and Zimmer consider a  manifold with a $G$-action and $G$-stationary measure $\nu$.  $\nu$ decomposes as $\nu_0 \ast \lambda$ where $\lambda$ is  a $P$-invariant measure (See \cite[Theorem 1.4]{MR1720183} for discussion of this decomposition).  Assuming that $\lambda$ satisfies   certain technical  conditions---namely that the measure  $\lambda$ is  either \emph{$P$-mixing} in \cite{MR1720183} or that every non-trivial element of the maximal split Cartan subgroup $S\subset P$ acts ergodically in \cite{MR1933077}---it is shown that the $G$-action on  $(M,\nu)$ is a  relatively measure-preserving extension  over a  standard projective  action. 
These  technical  conditions are typically difficult to verify.  
In our argument, we exploit the constraints on the dimension of $M$ and  verify certain conditions similar to those introduced by  Nevo and Zimmer.  
For instance, the technical condition in \cite[Theorem 3]{MR1933077} that all elements of the maximal split Cartan subgroup  $S\subset P$  act ergodically  implies our  Claim \ref{pi-part} below and hence all arguments in Section \ref{sec:thisone} apply.  
In practice, it is difficult to verify such ergodicity hypotheses.

%
%

\subsection{Introduction}
Throughout we assume that $G$ is a real, connected, semisimple  Lie group with finite center and  
  $\R$-rank     at least $2$.   
By a standard construction, there is a finite cover $\td G\to G$ such that $\td G$ is the direct product of connected, almost-simple Lie groups: $$\td G = \prod G_i.$$
We take $\Gamma\subset G$ to be lattice and, writing $\td \Gamma$ for the lift of $\Gamma$ to $\td G$, we assume that for every almost-simple factor  $G_i\subset \td G$ with $\R$-rank 1,  the projection of $\td \Gamma$ to $G_i$ is  dense in $G_i$. Such a lattice will be called a \emph{higher-rank lattice}.   
This in particular includes the cases that 
\begin{enumerate}
\item $G$ has no compact factors and $\Gamma\subset G$ is irreducible, or 
\item every non-compact, almost-simple factor of $G$ 
has $\R$-rank at least $2$.  
\end{enumerate}
Below, we will study smooth actions of such groups $\Gamma$.      
As we may lift an action of $\Gamma$ to an action of $\td \Gamma$, without loss of generality we will assume for the remainder that  $G$ is 
 a direct product  $G= \prod G_i$ of almost-simple Lie groups.  

Note that $G= C\times G'$ where $C$ is the maximal connected compact normal subgroup of  $G$ and $G'$ is the maximal connected normal subgroup without compact factors.  
We remark that our main results---Theorems \ref{thm:invmeas} and \ref{thm:measconj}---are  sharpest when  
$G'$ is assumed to be simple.

Let $M$ be a compact, connected, boundaryless $C^\infty$ manifold and let $\alpha \colon \Gamma\to \diff^{1+\beta}(M)$ be an action of $\Gamma$ on $M$ by $C^{1+\beta}$ diffeomorphisms.  For notational convenience later, we assume $\alpha$ is a \emph{right} action; that is $\alpha(gh) (x)= \alpha(h) ( \alpha (g)(x))$.   
Conjecturally, 
all such actions are obtained  from families of model algebraic actions via standard constructions.
  In particular, if $\dim( M)$ is sufficiently small   so that no model algebraic actions exists, Zimmer's conjecture states that all such actions should factor through   actions of finite quotients of $\Gamma$; that is, the image $\alpha(\Gamma)$ of $\Gamma$ in $ \diff^{1+\beta}(M)$ should be finite.  Such an action 
  is said to be \emph{trivial}.  See \cite[Conjectures I, II]{MR1666834},  \cite[Conjectures 4.12,  4.14]{F11}, or \cite[Conjecture 2.4]{1608.04995} for more precise formulations.   See also \cite{1608.04995} for recent solution to  (the non-volume-preserving case of) Zimmer's conjecture for cocompact lattices in split, simple Lie groups.  
 

We recall that in dimension 1, any lattice in a higher-rank, simple Lie group with finite center acts trivially on the circle \cite{MR1703323,MR1911660}.
For certain lattices acting on surfaces, we obtain in conjunction with the main results of \cite{MR2219247} the following  complete results. 
\begin{alttheorem}[{\cite[Corollary 1.8]{MR2219247} + Theorem \ref{thm:invmeas}}]\label{thm:easycor1}
Let $S$ be a closed oriented  surface and for $n\ge 4$, let $\Gamma\subset \Sl(n, \Z)$ be a finite index subgroup.    Then every $C^{1+\beta}$ action of $\Gamma$ on $S$ is trivial.  
\end{alttheorem}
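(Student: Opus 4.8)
The plan is to obtain this as a formal consequence of Theorem~\ref{thm:invmeas} together with \cite[Corollary~1.8]{MR2219247}. First I would take $G=\SL(n,\R)$: this is a connected simple Lie group with finite center and $\R$-rank $n-1\ge 3$, and any finite-index subgroup $\Gamma\subset\SL(n,\Z)$ is a lattice in $G$, so $\Gamma$ is a higher-rank lattice in the sense of this paper (falling under case (2), since the only noncompact almost-simple factor of $G$ is $\SL(n,\R)$, of $\R$-rank $\ge 2$). For $G$ of type $A_{n-1}$ one has $r(\SL(n,\R))=n-1$: this is the non-volume-preserving critical dimension for $\SL(n,\R)$, and (as recorded among the resonant codimensions of maximal parabolics) it equals the minimum over $1\le j\le n-1$ of $\bar r(\lieq_j)=\tfrac12\bigl(n^2-j^2-(n-j)^2\bigr)$, attained at $j=1$. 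Since $n\ge 4$ we have $\dim S=2<n-1=r(G)$, so Theorem~\ref{thm:invmeas} applies and yields an $\alpha(\Gamma)$-invariant Borel probability measure $\mu$ on $S$.

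Having produced the invariant measure, I would then invoke \cite[Corollary~1.8]{MR2219247}. The group $\Gamma$ is a finite-index subgroup of $\SL(n,\Z)$ with $n\ge 4\ge 3$; it has Kazhdan's property~(T) and its unipotent elements are exponentially distorted, which are exactly the features the Franks--Handel machinery exploits to show that a $C^{1+\beta}$ action of $\Gamma$ on a closed oriented surface preserving a Borel probability measure cannot be faithful and, more precisely, must have finite image. Applying this to $\alpha$ and $\mu$, the image $\alpha(\Gamma)$ is finite, i.e.\ $\alpha$ is trivial. (If the cited corollary is stated only for $\SL(n,\Z)$ and not for arbitrary finite-index subgroups, one can instead apply it to a finite-index normal subgroup $\Gamma'\trianglelefteq\SL(n,\Z)$ contained in $\Gamma$, concluding that $\ker\alpha\supseteq\Gamma'$ is of finite index, and then use the Margulis normal subgroup theorem; either way $\alpha(\Gamma)$ is finite.)

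Essentially all of the substance here lives in the two cited inputs, so the only genuinely new check is the numerical inequality $\dim S<r(\SL(n,\R))$, which is nothing but $2<n-1$, i.e.\ the hypothesis $n\ge 4$ (note that $r(\SL(3,\R))=2$, which is exactly why $n=3$ is excluded from this statement). The one point I would scrutinize is the precise formulation of \cite[Corollary~1.8]{MR2219247}: for the argument above one needs the version valid for an \emph{arbitrary} invariant Borel probability measure (not merely for a smooth area form) and in the $C^{1+\beta}$ differentiability class, since that is precisely the regularity and generality in which Theorem~\ref{thm:invmeas} furnishes the measure $\mu$. This is in fact the form in which the Franks--Handel result is available, so no extra hypotheses on $\alpha$ are needed and the theorem follows with no further work.
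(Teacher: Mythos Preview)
Your proposal is correct and follows exactly the approach the paper takes: the paper simply states that Theorems~\ref{thm:easycor1} and~\ref{thm:easycor2} ``follow directly from the main results in \cite{MR2219247} and Theorem~\ref{thm:invmeas},'' and your write-up spells out precisely this combination, including the numerical check $\dim S = 2 < n-1 = r(\SL(n,\R))$ that makes Theorem~\ref{thm:invmeas} applicable.
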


\begin{alttheorem}[{\cite[Corollary 1.7]{MR2219247} + Theorem \ref{thm:invmeas}}]\label{thm:easycor2}
Let $S$ be a closed oriented surface of genus at least 1 and for $n\ge 4$,  let $\Gamma\subset \Sl(n, \R)$ be a {nonuniform} lattice.  Then every $C^{1+\beta}$ action of $\Gamma$ on $S$ is trivial. 
\end{alttheorem}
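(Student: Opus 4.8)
The plan is to obtain Theorem~\ref{thm:easycor2} as a formal consequence of Theorem~\ref{thm:invmeas} combined with \cite[Corollary~1.7]{MR2219247}; the only real work is to check that the hypotheses of the two cited results match up and that the required dimension bound holds. So first I would record the setup. For $n\ge 4$ the group $G=\SL(n,\R)$ is connected, simple, has finite center, and has $\R$-rank $n-1\ge 3$; in particular every non-compact almost-simple factor of $G$ has $\R$-rank at least $2$, so $G$ is already (almost) simple with finite center and no passage to a cover is needed, and any lattice in $G$---in particular a nonuniform lattice $\Gamma\subset G$---is a higher-rank lattice in the sense used here. The surface $S$ is a closed, connected, boundaryless $C^\infty$ $2$-manifold and $\alpha\colon\Gamma\to\diff^{1+\beta}(S)$ is a $C^{1+\beta}$ action, so all standing hypotheses of Theorem~\ref{thm:invmeas} are in place except for the dimension bound, which I verify next.

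Second, I would compute $r(G)$ from the root system $A_{\ell}$ of $\SL(\ell+1,\R)$. There the resonant codimension of the maximal parabolic $\lieq_j$ is $\bar r(\lieq_j)=\tfrac12\big((\ell+1)^2-j^2-(\ell+1-j)^2\big)$, and since $j^2+(\ell+1-j)^2$ is maximized at the extreme values $j=1$ and $j=\ell$, the quantity $\bar r(\lieq_j)$ is minimized there, with value $\ell$. Hence $r(\SL(n,\R))=n-1$, so for every $n\ge 4$ we have $\dim S=2<3\le n-1=r(G)$. Theorem~\ref{thm:invmeas} then applies and produces an $\alpha(\Gamma)$-invariant Borel probability measure $\mu$ on $S$.

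Third, I would feed this into the surface theorem. A nonuniform lattice in $\SL(n,\R)$ with $n\ge 3$ falls within the scope of \cite[Corollary~1.7]{MR2219247}: any action of such a group on a closed oriented surface of genus at least $1$ by $C^1$ (hence in particular $C^{1+\beta}$) diffeomorphisms which preserves a Borel probability measure has finite image. Applying this to $\alpha$ together with the invariant measure $\mu$ from the previous step shows that $\alpha(\Gamma)\subset\diff^{1+\beta}(S)$ is finite, i.e.\ $\alpha$ is trivial, which is the assertion of Theorem~\ref{thm:easycor2}.

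There is no serious obstacle internal to this argument: all the depth sits in Theorem~\ref{thm:invmeas} (and in \cite{MR2219247}), which may be assumed. The two points that actually require attention are: (i) that the measure produced by Theorem~\ref{thm:invmeas} is merely Borel and not a priori smooth, so one must use that \cite[Corollary~1.7]{MR2219247} is genuinely a statement about invariant Borel probability measures rather than about area-preserving actions; and (ii) that the inequality $\dim S<r(G)$ is strict, which is exactly why the case $n=3$ must be excluded---there $r(\SL(3,\R))=2=\dim S$ and Theorem~\ref{thm:invmeas} yields no information.
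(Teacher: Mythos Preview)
Your proposal is correct and follows exactly the approach the paper takes: the paper simply states that Theorems~\ref{thm:easycor1} and \ref{thm:easycor2} ``follow directly from the main results in \cite{MR2219247} and Theorem~\ref{thm:invmeas},'' and your argument spells out precisely this derivation, including the verification that $r(\SL(n,\R))=n-1$ (cf.\ Example~\ref{ex:1}) so that $\dim S=2<r(G)$ for $n\ge 4$.
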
 
More generally, Theorem \ref{thm:easycor2} holds  when $\Gamma\subset G$ is a nonuniform lattice and $G$ is a connected, semisimple Lie group with finite center, no compact factors, and $r(G)\ge 3$ for the integer $r(G)$ defined below (\cite[Corollary 1.7]{MR2219247}).  In particular, the conclusion of Theorem \ref{thm:easycor2} hold for any nonuniform lattice in a higher-rank, simple Lie group $G$ with finite center  such that the restricted root system of the Lie algebra of $G$ is not of type $A_2$.  By the main results of \cite{1608.04995},    triviality of all actions on surfaces also holds for cocompact lattices in all such groups.  

Note that if  $\Gamma\subset \Sl(3, \R)$ is any lattice then there is model  real-analytic action of $\Gamma$ on a surface $S$ that  admits no invariant probability measure---namely, the right projective action of $\Gamma\subset \Sl(3,\R)$ on $\R P^2$ (or $S^2$.) Note  that any  volume form on $\R P^2$ is quasi-invariant but non invariant under this action. 
More generally, consider $G$   a semi-simple Lie group with finite center.  Let $Q\subset G$ be a parabolic subgroup and let $\Gamma\subset G$ be a lattice.  Then there is a natural right-action of  $\Gamma$  on the quotient  $Q\bs G$ preserving no  Borel probability measure   but preserving the Lebesgue measure class.  

Given the model action discussed above, we have the following conjecture, motivated by Theorems \ref{thm:easycor1} and \ref{thm:easycor2}, attributed to Polterovich in  \cite[Question 4.8]{F11}.  
\begin{conjecture}\label{conj:sphere}
Let $\Gamma\subset \Sl(3, \R)$ be a lattice.  Let $S$ be closed, connected a surface and let $\Gamma$ act on $S$ by $C^{1+\beta}$ diffeomorphisms.  Suppose there is no $\Gamma$-invariant Borel probability measure on $S$. 
 Then $S$ is either  $\RP^2$ or $S^2$; furthermore any such action is smoothly conjugate to the standard projective action. 
\end{conjecture}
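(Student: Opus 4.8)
The plan is to derive Conjecture \ref{conj:sphere} from Theorems \ref{thm:invmeas} and \ref{thm:measconj} applied to $G=\SL(3,\R)$, whose restricted root system is of type $A_2$, and then to upgrade the resulting measurable structure to a smooth conjugacy. A direct computation from the definitions of $r(G)$ and $m(G)$ in the $A_2$ case gives $r(\SL(3,\R))=2$ (so that the hypothesis of a non-invariant measure is consistent only when $\dim S=2$) and $m(\SL(3,\R))\ge 2$; in particular $\dim S=2\le m(G)$, so Theorem \ref{thm:measconj} applies to the given action. It produces a quasi-invariant Borel probability measure $\mu$ on $S$, a parabolic subgroup $Q\subsetneq G$, and a measurable $\Gamma$-equivariant map $\pi\colon(S,\mu)\to(Q\backslash G,\mathrm{Leb})$ such that the $\Gamma$-action on $(S,\mu)$ is a relatively measure-preserving extension of the standard projective action on $Q\backslash G$. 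The case $Q=G$ is excluded: it would make $\mu$ itself $\Gamma$-invariant, contrary to hypothesis. The case that $Q$ is a minimal parabolic (so $Q\backslash G$ is the three-dimensional flag variety) is excluded because the boundary factor has dimension at most $\dim S=2$: the $A$-Lyapunov spectrum of the $\Gl(2,\R)$-valued derivative cocycle over the induced space $S^\alpha$ has exactly $\dim S=2$ entries, and the $\dim(Q\backslash G)$ nonzero root exponents of the base factor must occur among them, forcing $\dim(Q\backslash G)\le 2$. Hence, up to the outer automorphism of $\SL(3,\R)$ exchanging the two classes of maximal parabolics, $Q\backslash G\cong\RP^2$ with its standard projective $\Gamma$-action and $\pi\colon(S,\mu)\to\RP^2$ is a relatively measure-preserving semiconjugacy.

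The heart of the proof is then to show that $\pi$ coincides almost everywhere with a smooth covering map. First I would prove that the conditional measures of $\mu$ along the fibers of $\pi$ are finitely supported. Disintegrating $\mu$ over $S^\alpha$, relative measure-preservation gives a $Q$-invariant probability measure on the fiber $F$ over the base point; the unipotent radical of $Q$ and a copy of $\SL(2,\R)$ inside its Levi factor then act on $(F,\mathrm{prob})$ preserving this measure, while the exponent count above forces the fiber direction to carry no nonzero Lyapunov exponent. Combining the proximality of a regular one-parameter subgroup of the Levi factor with this absence of fiberwise expansion and with surface-dynamical control of individual $C^{1+\beta}$ diffeomorphisms of $S$ (e.g.\ Franks--Handel theory, or the non-resonance mechanism of Proposition \ref{thm:nonresonantimpliesinvariant}), one should conclude that $F$ is finite. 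Once the fibers of $\pi$ are finite, $\pi$ is essentially $n$-to-one for some $n$; since the standard projective action of $\SL(3,\R)$ on $\RP^2$ is minimal and its only connected surface covers are $\RP^2$ itself and $S^2$, this identifies $S$, up to homeomorphism, with $\RP^2$ or $S^2$, and the $\Gamma$-action, up to measurable conjugacy, with the standard projective action (respectively with its lift to $S^2$).

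Finally I would promote the measurable conjugacy $h$ to a $C^\infty$ diffeomorphism. Zimmer cocycle superrigidity, applied to the $\Gl(2,\R)$-valued derivative cocycle of $\alpha$ over $(S,\mu)$ extended to the $G$-action on $S^\alpha$, identifies it up to a measurable coboundary and a compact-valued correction with the derivative cocycle of the standard projective model, so $h$ intertwines the two derivative cocycles measurably. One then bootstraps the regularity of $h$: choosing $\gamma_0\in\Gamma$ that projects to a regular element of $\SL(3,\R)$, its action on $\RP^2$ is a Morse--Smale-type map with hyperbolic fixed points (one sink, one source, one saddle), and a measurable conjugacy intertwining derivative cocycles between such a smooth map and a $C^{1+\beta}$ map is smooth near the fixed points by Sternberg-type linearization and then spreads over all of $S$ by $\Gamma$-equivariance, exactly as in the regularity arguments of \cite{1608.04995}. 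This would give the smooth conjugacy asserted in Conjecture \ref{conj:sphere}.

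I expect the genuine obstacle to be the middle step: ruling out exotic relatively measure-preserving extensions of the projective action (with Cantor-like or countably infinite fibers) and promoting the measurable factor $\pi$ to a continuous map. Nothing in Theorems \ref{thm:invmeas} and \ref{thm:measconj} formally forbids such extensions, and excluding them seems to require new surface-dynamical or entropy-rigidity input specific to $\dim S=2$; this is presumably why the statement remains conjectural. A secondary difficulty, which must also be resolved before the conjugacy can be made smooth, is to show a priori that the quasi-invariant measure $\mu$ lies in the Lebesgue class rather than being singular.
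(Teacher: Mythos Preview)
The statement you are attempting to prove is Conjecture~\ref{conj:sphere}, which the paper explicitly presents as an open problem attributed to Polterovich; there is no proof in the paper to compare against. You yourself acknowledge this at the end of your proposal, correctly identifying the step of ruling out exotic relatively measure-preserving extensions and promoting the measurable factor map to a continuous (let alone smooth) map as the genuine obstacle.

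A few remarks on your outline nonetheless. Since $\dim S = 2 = r(\SL(3,\R))$, you should invoke Theorem~\ref{thm:measconjfin} rather than Theorem~\ref{thm:measconj}: the former already yields that, in the absence of an invariant probability measure, $(S,\mu)$ is measurably conjugate to a \emph{finite} extension of the standard action on $Q\backslash G$ for a \emph{maximal} parabolic $Q$. Thus your entire middle paragraph---ruling out the minimal parabolic by an exponent count, and arguing that the fiber $F$ is finite via proximality and surface dynamics---is subsumed by results already established in the paper. (Incidentally, your exponent-counting argument for excluding the minimal parabolic is not quite the mechanism the paper uses; the paper's proof of Theorem~\ref{thm:measconjfin} instead shows that all fiberwise exponents are resonant with negative roots, and then invokes Proposition~\ref{thm:resonant}.) What remains genuinely open, and what the paper makes no claim to address, is exactly what you flag at the end: passing from a finite \emph{measurable} extension of the projective action to a topological identification of $S$ with $\R P^2$ or $S^2$, showing that $\mu$ lies in the Lebesgue class, and upgrading the measurable conjugacy to a smooth one. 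Your sketch of a regularity bootstrap via cocycle superrigidity and Sternberg linearization near hyperbolic fixed points is plausible in spirit, but it is not a proof, and nothing in the paper or the cited literature carries this out in the present setting.
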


\subsection{Facts from the structure   of Lie groups} To state our main results we recall some facts and definitions from the structure theory of real Lie groups.  A standard reference is \cite{MR1920389}.  
Let $G$ be a connected,  semisimple Lie group with finite center.  
As usual, write $\lieg$ for the Lie algebra of $G$.  Fix a Cartan involution  $\theta$  of $\lieg$ and write $\liek$ and $\liep$, respectively,  for the $+1$ and $-1$ eigenspaces of $\theta$.  Denote by $\liea$  the maximal abelian subalgebra of $\liep$ and by $\liem$ the centralizer of $\liea$ in $\liek$.  We let $\Sigma$ denote the set of 
restricted roots of $\lieg$ with respect to $\liea$. Note that the elements of $\Sigma$ are real linear functionals on $\liea$.  
Recall that $\dim_\R(\liea)$ is the $\R$-rank of $G$.

We choose a family of positive roots $\Sigma_+\subset \Sigma$  
and write $\Sigma_-$ for the corresponding set of negative roots.  For $\beta\in \Sigma$ write $\lieg^\beta$ for the associated root space.  Then $\lien =\bigoplus_{\beta\in \Sigma_+} \lieg^\beta$ is a nilpotent subalgebra.  
A \emph{standard parabolic subalgebra} (relative to the choice of positive roots $\Sigma_+$) is any subalgebra  of $\lieg$ containing $ \liem\oplus\liea\oplus \lien$.  
Recall $\beta\in \Sigma_+$ is a \emph{simple} (positive) \emph{root} if it is not a linear combination of other elements in $\Sigma_+$.  We denote by 
$\Pi\subset \Sigma_+$  the set of simple roots in $\Sigma_+$.  
We have that the standard parabolic subalgebras of $\lieg$ are parametrized by exclusion of simple (negative) roots: for any sub-collection $\Pi'\subset\Pi$ let 
\begin{equation}\label{eq:parabolic}\lieq_{\Pi'}= \liem\oplus \liea \oplus \bigoplus_{\beta\in \Sigma_+ \cup \mathrm{Span}(\Pi')} \lieg^\beta.\end{equation}
Then $\lieq_{\Pi'}$ is a Lie subalgebra  of $\lieg$ and all standard parabolic subalgebras of $\lieg$ are of the form $\lieq_{\Pi'}$ for some $\Pi'\subset \Pi$.  (See \cite[Proposition 7.76]{MR1920389} and, in particular, the analysis of  corresponding $\mathfrak{sl}(2, \R)$-triples,   \cite[Lemma 7.73]{MR1920389}).

Let $A,N,$ and $ K$ be the {analytic subgroups} of $G$ corresponding to $\liea, \lien$ and $\liek$.  These are closed subgroups of $G$ and $G = KAN$ is the corresponding Iwasawa decomposition of $G$.  As $G$ has finite center, $K$ is compact.      Note that the Lie   exponential $\exp\colon \lieg\to G$ restricts to  diffeomorphisms between $\liea$ and $A$ and $\lien$ and $N$.  Fixing a basis for $\liea$, we identify $A= \exp (\liea) = \R^d$.  Via this identification we often  extend linear functionals on $\liea$ to $A$.  
We write $M= C_K(\liea)$ for the centralizer of $\liea$ in $K$.  Then $P = MAN$ is the \emph{standard minimal parabolic subgroup}. 
 Since $M$ is compact, it follows that $P$ is amenable.  
 A \emph{standard parabolic subgroup} (relative to the choice of $\theta$ and $\Sigma_+$ above) is any closed subgroup  $Q\subset G$ 
 containing $P$. 
 The Lie algebra of any standard parabolic subgroup $Q$ is a standard parabolic subalgebra and the correspondence between standard parabolic subgroups and subalgebras is 1-1.

We say two restricted roots $\beta,\hat \beta\in \Sigma$ are \emph{coarsely equivalent} if there is some $c>0$ with 
$$\hat\beta = c\beta.$$
Note that $c$ takes values only  in $\{\frac 1 2, 1, 2\}$ and this occurs only if the root system $\Sigma$ has a factor of type $BC_\ell$.    
Let $\hat\Sigma$ denote the set of \emph{coarse restricted roots}; that is, the set of coarse equivalence classes of $\Sigma$.
Note that for $\xi \in \hat \Sigma$, $\lieg^\xi := \oplus _{\beta \in \xi} \lieg ^\beta$ is a nilpotent subalgebra      
 and the Lie exponential    restricts to a  diffeomorphism between $\lieg^\xi$ and  the corresponding analytic subgroup which we denote by $G^\xi$.

Let $\lieq$ denote a standard parabolic subalgebra of $\lieg$.  
Observe that if $\lieg^\beta\subset \lieq$ for some $\beta\in \Sigma$ then, from the structure of parabolic subalgebras, $\lieg^\xi\subset \lieq$ where $\xi\in \hat \Sigma$ is the coarse restricted root containing $\beta$.
A standard parabolic (proper) subalgebra $\lieq$ is \emph{maximal} if there is no subalgebra $\lieq'$ with $\lieq\subsetneq\lieq'\subsetneq\lieg$.  Note that maximal standard parabolic subalgebras are of the form $\lieq_{\Pi\sm \beta}$ for some $\beta\in \Pi$.  

\subsection{Resonant codimension and related combinatorial numbers} 

Given a standard parabolic subalgebra $\lieq$   define the \emph{resonant codimension} of $\lieq$ to be the cardinality of the set $$\{\xi \in \hat \Sigma \mid \lieg^\xi\not\subset \lieq\}.$$
 Given $G$ as above we define a combinatorial number $r(G)$ as follows. 
\begin{definition}The \emph{minimal resonant codimension} of $\lieg$, denoted $r(\lieg)$, is defined to be the minimal value of the resonant codimension of $\lieq$ as $\lieq$ varies over all (maximal)   proper  parabolic subalgebras of $\lieg$.  
\end{definition}

\begin{example}\label{ex:1}We compute $r(\lieg)$ for a number of classical real simple Lie algebras as well as simple real Lie algebras with restricted root systems of exceptional type.  
Given a simple real Lie algebra $\lieg$ the number $r(\lieg)$ is determined purely by the  restricted root system.
In particular,  we have the following.  
\begin{description}
\item [Type $A_n$] $r(\lieg)= n$.  This includes $\mathfrak{sl}(n+1,\R),$ $ \mathfrak{sl}(n+1,\mathbb{C}), $ $\mathfrak{sl}(n+1,\mathbb H)$.
\item [Type $B_n$, $C_n$, and $(BC)_n$] $r(\lieg)= 2n-1$. This includes  $\mathfrak{sp}(n,\R)$, $\mathfrak{so}(n,m)$ for $n<m$, and 
$\mathfrak{su}(n,m)$ and
$\mathfrak{sp}(n,m)$ for $n\le m$. 
\item [Type $D_n$] $r(\lieg)= 2n-2$ for $n\ge 4.$  This includes $\mathfrak{so}(n,n)$ for $n\ge 4$.  
\item [Type $E_6$] $r(\lieg)= 16$.
\item [Type $E_7$] $r(\lieg)= 27$.
\item [Type $E_8$] $r(\lieg)=57 $.
\item [Type $F_4$] $r(\lieg)= 15$.
\item [Type $G_2$] $r(\lieg)= 5$.
\end{description}
In all classical root systems $A_n,B_n, C_n,(BC)_n$ and $D_n$ the number $r(\lieg)$ corresponds to the parabolic obtained by omitting the left-most root in the standard Dynkin diagrams.  Exceptional root systems are checked by hand.  
\end{example}

Note that if $\lieg$ is non-simple then $r(\lieg)$ is $\min\{ r(\lieg_i): 1\le i \le n\}$ where $\lieg_i$ are the simple  {non-compact} factors of $\lieg$. We write $r(G) = r(\lieg)$.   Note that our number $r(G)$ grows with the rank of $G$ but not with  the dimension of the minimal algebraic actions.  In particular, we only obtain the optimal expected results in the case that $G$ is split.  

We define a second number $m(\lieg)$ associated to the Lie algebra $\lieg$ of $G$. 
\begin{definition}
Given a simple Lie algebra $\lieg$ of $\R$-rank at least $2$, define $m(\lieg)$  to be the minimal value of the resonant codimension of $\lieq$ as $\lieq$ varies over all   proper  parabolic subalgebras $\lieq$ of the form $\lieq_{\Pi\sm \{\alpha_i, \alpha_j\}}$ where $\alpha_i\neq \alpha_j$ are simple roots in $\Pi$.  If $\lieg$ has rank $1$, let $m(\lieg) = 1$.   If $\lieg= \oplus \lieg_i$ is semisimple, take $m(\lieg)$ to be the minimum of $m(\lieg_i)$ over all non-compact, simple factors $\lieg_i$ of $\lieg$.    
\end{definition}
As before, write $m(G)=m(\lieg)$.  
\begin{example}\label{ex:2}
Again, we compute the number $m(\lieg)$ for a number of classical, simple   real Lie algebras as well as simple real Lie algebras with restricted root systems of exceptional type.  
As before, given a simple real Lie algebra $\lieg$ the number $m(\lieg)$ is determined only by the  restricted root system.
\begin{description}
\item [Type $A_n$] $m(\lieg)= 2n-1$.  
\item [Type $B_n$, $C_n$, and $(BC)_n$] $m(\lieg)= 4n-4$. 
\item [Type $D_n$] $m(\lieg) = 9$ for $ n=4$; $m(\lieg)= 4n-6$ for $n\ge 5$.  
\item [Type $E_6$] $m(\lieg)= 24$.
\item [Type $E_7$] $m(\lieg)= 43$.
\item [Type $E_8$] $m(\lieg)=84 $.
\item [Type $F_4$] $m(\lieg)= 20$.
\item [Type $G_2$] $m(\lieg)= 6$.
\end{description}
In all classical root systems except $D_4$, the number $m(\lieg)$ corresponds to the parabolic subalgebra obtained by omitting the two left-most roots in the standard Dynkin diagrams.  In $D_4$, the number $m(\lieg)$ corresponds to omitting two commuting roots.  Exceptional root systems are checked by hand.  

\end{example}

As before, write $m(G) = m(\lieg)$.

\subsection{Statement of  results}
Let $G$ be as introduced above and let $\Gamma\subset G$ be a higher-rank lattice.  Recall that $\alpha$ denotes a right action of  $\Gamma$ on a compact, boundaryless manifold $M$ by $C^{1+\beta}$ diffeomorphisms.
\subsubsection{Existence of invariant measures in  low dimensions}
Our first  main result establishes the existence of an $\alpha$-invariant measure  if the dimension $M$ is sufficiently small relative to $r(G)$.
\begin{theorem}\label{thm:invmeas}
Let $M$ be a manifold with $\dim(M)< r(G)$.  Then for any $C^{1+\beta}$ action $\alpha$ of $\Gamma$ on $M$ there exists an $\alpha$-invariant Borel probability measure.  
\end{theorem}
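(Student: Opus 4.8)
The plan is to reduce the existence of a $\Gamma$-invariant probability measure on $M$ to a statement about $P$-invariant measures on the induced space $M^\alpha = (G \times M)/\Gamma$, and then use the dimension hypothesis $\dim(M) < r(G)$ to rule out all the ``non-invariant'' possibilities. More precisely, first I would form the induced $G$-space $M^\alpha$ together with the fiber bundle projection $M^\alpha \to G/\Gamma$ whose fibers are copies of $M$. Pushing forward a smooth measure on $M$ and averaging over $K$ (or over $G/\Gamma$), one obtains a $P$-invariant — indeed one can arrange a $G$-stationary — probability measure $\mu$ on $M^\alpha$ projecting to the homogeneous measure on $G/\Gamma$. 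The task is to show $\mu$ (or some other invariant object built from it) disintegrates into fiberwise measures giving a $\Gamma$-invariant probability on $M$; equivalently, to show that the $A$-action (or $S = MA$-action) on the fiberwise measures is trivial in an appropriate sense.

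The key tool I would invoke is Proposition \ref{thm:nonresonantimpliesinvariant} (the ``nonresonance implies invariance'' statement referenced in the introduction). The strategy: consider the normal form / Lyapunov structure of the induced $G$-action along the fibers of $M^\alpha \to G/\Gamma$ with respect to the $A$-direction, using the fact that along almost every $P$-orbit the fiber dynamics is a $C^{1+\beta}$ action. The fiberwise Lyapunov exponents of $A$ are linear functionals on $\liea$; since $\dim(M) < r(G)$, there are strictly fewer than $r(\lieg)$ of them (counted with multiplicity, and grouped into coarse classes), so they cannot ``fill up'' the complement of any maximal parabolic $\lieq_{\Pi \setminus \{\alpha\}}$ — there must be some coarse root $\xi \in \hat\Sigma$ with $\lieg^\xi \not\subset \lieq$ that is nonetheless ``nonresonant'' with the fiberwise exponents (no fiberwise exponent is a nonnegative combination obstructing the relevant parabolic). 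This is exactly the combinatorial meaning of the resonant codimension $\bar r(\lieq)$ and of $r(\lieg) = \min_\lieq \bar r(\lieq)$: if the number of fiberwise exponents is below this minimum, then for the parabolic achieving the minimum there is a root-subgroup $G^\xi$ acting by a unipotent (nonresonant) normal form on the fibers. Applying Proposition \ref{thm:nonresonantimpliesinvariant} to this $G^\xi$ then produces invariance of the fiberwise measures under a larger subgroup, and bootstrapping (using that $G$ is generated by such root subgroups together with $P$, plus the higher-rank hypothesis on rank-one factors) upgrades $P$-invariance to $G$-invariance of $\mu$, which descends to a $\Gamma$-invariant probability measure on $M$.

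Concretely the steps are: (1) construct $M^\alpha$ and a $P$-invariant (stationary) probability $\mu$ projecting correctly to $G/\Gamma$; (2) set up the fiberwise tangent cocycle over the $G$-action and extract the fiberwise Lyapunov exponents for a generic element of $A$, observing they are at most $\dim(M) < r(\lieg)$ in number; (3) invoke the combinatorial definition of $r(\lieg)$ to find a maximal parabolic $\lieq = \lieq_{\Pi\setminus\{\alpha\}}$ whose resonant codimension exceeds the number of fiberwise exponents, hence a coarse root $\xi$ with $\lieg^\xi \not\subset \lieq$ such that the $G^\xi$-action on fibers has a nonresonant (unipotent) normal form relative to the fiberwise exponents; (4) apply Proposition \ref{thm:nonresonantimpliesinvariant} to conclude $\mu$ is $\langle P, G^\xi\rangle$-invariant, then iterate/parabolically induct to get $G$-invariance, handling rank-one factors via the density hypothesis on $\td\Gamma$; (5) disintegrate $\mu$ over $G/\Gamma$ and push the fiberwise measure at the identity coset back to $M$ to obtain the $\Gamma$-invariant probability measure.

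The main obstacle, I expect, is step (4): turning a single nonresonant root subgroup $G^\xi$ fixing (in the measure sense) the fibers into full $G$-invariance. One has to be careful that the ``nonresonance'' condition used to invoke Proposition \ref{thm:nonresonantimpliesinvariant} genuinely holds — i.e., that the Lyapunov data along $\liea$ really does leave enough room, which is a statement not just about cardinality but about the precise arrangement of exponents relative to the walls of the Weyl chamber — and then to run the parabolic induction so that each successive root subgroup one adds is again nonresonant for the (now larger) invariant measure. The rank-one factors require separate treatment since there $P$ is already ``large'' and one leans on the density of the $\td\Gamma$-projection together with a Furstenberg-type stationarity argument rather than on the root-subgroup induction. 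Getting the bookkeeping of coarse roots versus roots right (the factor of $c \in \{1/2, 1, 2\}$ in type $BC_\ell$) is a further technical point but not a conceptual one.
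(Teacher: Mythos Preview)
Your overall strategy is correct and matches the paper's: form the suspension $M^\alpha$, take a $P$-invariant measure $\mu$, count fiberwise Lyapunov exponents against $r(G)$, and invoke Proposition~\ref{thm:nonresonantimpliesinvariant}. However, you over-engineer step~(4) and worry about obstacles that are not there.

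First, the $P$-invariant measure: no averaging or stationarity is needed. Since $P=MAN$ is amenable, the $P$-action on the compact space of probability measures on $M^\alpha$ (or directly on the fibers) has a fixed point; any such $\mu$ automatically projects to Haar on $G/\Gamma$. Take $\mu$ to be $P$-ergodic.

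Second, and this is the main point: there is no iteration or ``parabolic induction'' in step~(4). You apply Proposition~\ref{thm:nonresonantimpliesinvariant} once, to \emph{every} non-resonant coarse root simultaneously. Let $Q\supset P$ be the full stabilizer of $\mu$ in $G$; it is a standard parabolic subgroup. By Proposition~\ref{thm:nonresonantimpliesinvariant}, $G^\xi\subset Q$ for every coarse root $\xi$ that is non-resonant with $\calL^F(\mu)$. Hence every $\xi$ with $G^\xi\not\subset Q$ is resonant, and there are at most $\#\hat\calL^F(\mu)\le \dim M < r(G)$ such $\xi$. Thus the resonant codimension of $Q$ is strictly less than $r(G)$, and by the very definition of $r(G)$ this forces $Q=G$. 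Done. There is no bootstrapping, no concern about ``arrangement of exponents relative to Weyl chamber walls'' (resonance is just positive proportionality of linear functionals, a pure counting statement), and no separate treatment of rank-$1$ factors: if $G$ has a non-compact rank-$1$ factor then $r(G)=1$ and the hypothesis $\dim M < r(G)$ is vacuous.

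Your step~(3) is also slightly backwards: you do not first choose a maximal parabolic and then hunt for a non-resonant root outside it. Rather, you let the measure determine its own stabilizer $Q$ and observe that its resonant codimension is bounded by the number of coarse fiberwise exponents.
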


We remark in the case that $\Gamma$ is cocompact,  Theorem \ref{thm:invmeas} in an immediate corollary of the main result of \cite{1608.04995} where Zimmer's conjecture is verified for actions of compact lattices on manifolds of dimension less than $r(G)$.  The proof of the main result of \cite{1608.04995} uses the proof of Theorem \ref{thm:invmeas}, namely the key observation in Proposition \ref{thm:nonresonantimpliesinvariant} below.    We note also that Theorem \ref{thm:invmeas} applies to nonuniform lattices whereas Zimmer's conjecture has yet to be verified for nonuniform lattices.  
We do not assert any regularity of the measure in Theorem \ref {thm:invmeas}.  In particular, the ergodic components of the measure are expected  to be supported on finite sets as such actions are expected to be trivial.    
Theorems \ref{thm:easycor1} and \ref{thm:easycor2} follow directly from   the main results in \cite{MR2219247} and Theorem \ref{thm:invmeas}.

\subsubsection{Finite extensions  of  projective factors in critical dimension}
In the  case where $\dim M = r(G)$, we recall as a model the standard right action of $\Gamma\subset \Sl(n+1, \R)$ on $\RP^{n}$.  Note that  $\RP^{n}$ has the structure of $Q\bs \SL(n+1, \R)$ for a (maximal) parabolic subgroup $Q \subset \Sl(n+1, \R)$.


\begin{theorem}\label{thm:measconjfin}
Let $M$ be a manifold with $\dim(M)= r(G) $.  Then given any $C^{1+\beta}$ action $\alpha$  of $\Gamma$ on $M$ either
\begin{enumlemma}
\item  there exists an $\alpha$-invariant Borel probability measure on $M$; or
\item there exists an $\alpha$-quasi-invariant Borel probability measure $\mu$ on $M$ and a maximal parabolic subgroup $Q\subset G$ such that the action $\alpha$ of $\Gamma$ on $ (M, \mu)$ is measurably conjugate to a finite extension of the standard right action of $\Gamma$ on $(Q\bs G, m)$ where $m$ is of Lebesgue class.   
\end{enumlemma}
\end{theorem}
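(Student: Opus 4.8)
The plan is to run the suspension-and-minimal-parabolic argument that is standard in this circle of ideas, and to exploit the critical value $\dim M = r(G)$ at two points: to force the parabolic attached to an invariant measure to be maximal, and to force the resulting extension to be finite.

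\emph{Step 1: Suspension and the parabolic attached to $\nu$.} I would work with the suspension $M^\alpha=(G\times M)/\Gamma$, a bundle over $G/\Gamma$ with compact fiber $M$ carrying the induced left $G$-action, and recall that $\Gamma$-invariant (resp.\ quasi-invariant) Borel probability measures on $M$ correspond to $G$-invariant (resp.\ suitably fibered) Borel probability measures on $M^\alpha$ projecting to the Haar probability $m_{G/\Gamma}$. Since $P=MAN$ is amenable and the fibers are compact, there is a $P$-invariant Borel probability measure $\nu$ on $M^\alpha$ projecting to $m_{G/\Gamma}$; replacing $\nu$ by an ergodic component, I may assume $\nu$ is ergodic so that the fiberwise Lyapunov exponents of the derivative cocycle of $\alpha$ over $(M^\alpha,\nu)$ are well defined. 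The stabilizer $\mathrm{Stab}_G(\nu)$ is a closed subgroup containing $P$, hence a standard parabolic subgroup $Q$, and tracking root groups gives $\mathrm{Lie}(Q)=\lieq(\nu):=\liem\oplus\liea\oplus\bigoplus\{\lieg^{\beta}:\,G^{\xi}\text{ preserves }\nu,\ \beta\in\xi\in\hat\Sigma\}$; every coarse root $\xi$ with $\lieg^{\xi}\not\subset\lieq(\nu)$ is necessarily negative, as $N\subset P$.

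\emph{Step 2: The dichotomy from Proposition~\ref{thm:nonresonantimpliesinvariant}.} Proposition~\ref{thm:nonresonantimpliesinvariant} --- through its proof, which attaches to each coarse root $\xi$ with $\lieg^{\xi}\not\subset\lieq(\nu)$ a fiberwise Lyapunov exponent positively proportional to $\xi$, distinct such roots forcing distinct exponents --- yields one of two possibilities. Either $\lieq(\nu)=\lieg$, in which case $\nu$ is invariant under $P$ and under every root group $G^{\xi}$, hence under the group they generate, which is $G$; pushing $\nu$ down to $M$ produces an $\alpha$-invariant Borel probability measure, which is conclusion (a). Or $\lieq(\nu)\subsetneq\lieg$ and $\bar r(\lieq(\nu))\le\dim M$; then $\lieq(\nu)$, being a proper parabolic, lies in a maximal proper parabolic $\lieq_{0}$, and since $\bar r$ is nonincreasing under enlargement of parabolics while $\bar r(\lieq_{0})\ge r(G)=\dim M$, the chain $\dim M\le\bar r(\lieq_{0})\le\bar r(\lieq(\nu))\le\dim M$ collapses. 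Hence $\lieq(\nu)=\lieq_{0}$ is a maximal proper parabolic with $\bar r(\lieq(\nu))=r(G)=\dim M$, the counting in Proposition~\ref{thm:nonresonantimpliesinvariant} is forced to equality, and the fiberwise exponents of $\alpha$ are exactly the functionals $c_{\xi}\xi$ ($c_{\xi}>0$) as $\xi$ ranges over the coarse roots outside $\lieq(\nu)$, each of Oseledets multiplicity one; in particular there is no zero fiberwise exponent and the derivative cocycle is maximally degenerate. Let $Q$ be the maximal parabolic with $\mathrm{Lie}(Q)=\lieq(\nu)$.

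\emph{Step 3: The measurable projective factor.} It remains to promote the latter case to conclusion (b). The maximal degeneracy found in Step~2 is precisely the input needed to verify the Nevo--Zimmer-type ergodicity condition, Claim~\ref{pi-part}: at critical dimension the fiberwise tangent dynamics of $\alpha$ is completely accounted for by the roots outside $\lieq(\nu)$, with no directions left over. Granted Claim~\ref{pi-part}, the analysis of Section~\ref{sec:thisone} (paralleling \cite{MR1720183,MR1933077}) shows that the $G$-action on $(M^\alpha,\nu)$ is a relatively measure-preserving extension of the $G$-action on $(G/Q,m)$, with $m$ of Lebesgue class, via a $G$-equivariant measurable factor map. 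Unwinding the suspension: $\nu$ corresponds to a $\Gamma$-equivariant measurable field $F\colon G\to\mathrm{Prob}(M)$ which, since $Q=\mathrm{Stab}_G(\nu)$, descends to $\bar F\colon Q\backslash G\to\mathrm{Prob}(M)$; the degeneracy forces the measures $\bar F(\xi)$ to be pairwise singular, so $\mu:=\int_{Q\backslash G}\bar F(\xi)\,dm(\xi)$ disintegrates over $Q\backslash G$ and produces a $\Gamma$-equivariant measurable factor map $\phi\colon(M,\mu)\to(Q\backslash G,m)$, with $\mu$ being $\alpha$-quasi-invariant, $\phi_{*}\mu$ of Lebesgue class, and the extension relatively measure preserving.

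\emph{Step 4: Finiteness of the extension, and the hard steps.} Finally, a Lyapunov-exponent comparison along $\phi$ finishes the proof: $D\alpha$ over $(M,\mu)$ contains the pullback of the derivative cocycle of the $\Gamma$-action on $Q\backslash G$, whose exponents are the roots outside $\lieq(\nu)$ taken with their multiplicities in $\lieg$; since by Step~2 the exponents of $\alpha$ are these same coarse roots each with multiplicity one, all of them must have multiplicity one (so $\dim M=\dim(Q\backslash G)$), the complementary cocycle along the fibers of $\phi$ carries no exponents, and hence the fibers of $\phi$ are finite $\mu$-a.e.\ --- so the extension is a finite extension, which is conclusion (b). I expect the genuine difficulties to be: Proposition~\ref{thm:nonresonantimpliesinvariant} together with the identification of fiberwise exponents with roots (taken as given here); verifying Claim~\ref{pi-part} from the critical-dimension degeneracy --- the paper emphasizes that such ergodicity conditions are normally hard to check, and it is exactly the constraint $\dim M=r(G)$ that makes it tractable --- and the passage, in Step~4, from a relatively measure-preserving extension to a genuinely finite one.
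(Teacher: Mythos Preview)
Your Steps 1--2 match the paper's argument closely, and the conclusion of Step~2---that every fiberwise Lyapunov exponent is positively proportional to a negative coarse root, with no zero exponent---is exactly what is needed. The divergence, and the gap, is in how you deduce finiteness.

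In Step~4 you attempt to compare Lyapunov spectra along the measurable factor $\phi\colon (M,\mu)\to (Q\backslash G,m)$. This is not justified: $\phi$ is only measurable, so there is no derivative cocycle to pull back, and Lyapunov exponents are not in general inherited through measurable factor maps. Worse, your conclusion that the root-space multiplicities $\dim\lieg^{\beta}$ are all $1$ (equivalently $\dim M=\dim(Q\backslash G)$) is simply false when $G$ is not split; for non-split $G$ one has $\dim(Q\backslash G)>r(G)=\dim M$, yet the theorem still holds.

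The paper's route is both simpler and earlier in the logic. From Step~2 you already know every fiberwise exponent is positively proportional to a \emph{negative} root; choosing $s$ in the positive Weyl chamber of $A$ makes every fiberwise exponent negative at $s$. A standard Pesin-theory argument then shows that the fiberwise conditional measures $\mu_{g}$ on $M$ are supported on finite sets (of constant cardinality by ergodicity). This finite-support fact is not a consequence of Claim~\ref{pi-part} but rather its \emph{hypothesis}: look at the proof of Claim~\ref{pi-part}, which explicitly uses that $\mu'_{g'}$ has finitely many atoms to conclude that the partition into $A'$-ergodic components is finite on each $A$-ergodic component. So finite support must be established \emph{before} your Step~3, and once it is, Proposition~\ref{thm:resonant} yields $Q_{\mathrm{In}}(\mu)=Q_{\mathrm{Out}}(\mu)$ and Claim~\ref{prop:msblisom} gives the isomorphism $(M,\hat\mu)\cong (Q\backslash G,m)\times(Y,\eta)$ with $(Y,\eta)$ the finite fiber---your Step~4 becomes entirely unnecessary.
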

Motivated by the above theorem, we   extend Conjecture \ref{conj:sphere}.
\begin{conjecture}
Let $M$ be a manifold with $\dim(M)= r(G) $.  Given any sufficiently smooth action $\alpha$  of $\Gamma$ on $M$ either
\begin{enumlemma}
\item  there exists an $\alpha$-invariant Borel probability measure on $M$; or
\item there is  a maximal parabolic subgroup $Q\subset G$ such that 
$M$ is diffeomorphic to a  finite cover of $Q\bs G$; moreover, the action $\alpha$ is smoothly conjugate to a lift of   the standard right-action of $\Gamma$ on $Q\bs G$.  
\end{enumlemma}

\end{conjecture}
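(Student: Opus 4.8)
The plan is to derive the conjecture from Theorem~\ref{thm:measconjfin} by upgrading measurable rigidity to smooth rigidity. If $\alpha$ preserves no Borel probability measure, Theorem~\ref{thm:measconjfin} places us in alternative~(b): there are a quasi-invariant probability $\mu$ of Lebesgue class, a maximal parabolic $Q\subset G$, and a measurable conjugacy between $\alpha\colon\Gamma\curvearrowright(M,\mu)$ and a finite-to-one extension of $\Gamma\curvearrowright(Q\bs G,m)$. The first step is to identify this finite extension algebraically: using superrigidity for the finite ``monodromy'' data together with Zariski density of $\Gamma$ in $G$, one expects the extension to be $\Gamma$-equivariantly a homogeneous space $\widehat Q\bs G$ with $\widehat Q^{\,0}=Q^{0}$, so that $\widehat Q\bs G\to Q\bs G$ is a $G$-equivariant finite covering of flag manifolds. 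Replacing $Q$ by $\widehat Q$, we are left with a single $\Gamma$-equivariant measure isomorphism $\Phi\colon(M,\mu)\to(\widehat Q\bs G,m)$, and everything reduces to proving that $\Phi$ is smooth.

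For the regularity upgrade I would follow the standard route for promoting a measurable conjugacy between a smooth action and an algebraic one to a smooth conjugacy, carried out on the suspension. The $G$-action on the auxiliary space $M^\alpha$, along its orbits, sees the $C^{1+\beta}$ structure of $M$ as a collection of coarse-Lyapunov (normally hyperbolic) foliations, one for each coarse restricted root $\xi\in\hat\Sigma$; on the algebraic side these match the smooth foliations by orbits of the horospherical subgroups $G^\xi$, and together with the $\liem\oplus\liea$-directions they span the tangent bundle. Non-stationary normal-form theory linearizes the dynamics along each $G^\xi$-leaf, and along a uniformly contracted coarse-root direction the H\"older regularity of the conjugacy bootstraps to $C^\infty$; $G$-equivariance spreads this from a positive-measure set to a.e.\ point. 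Since the conjugacy is then smooth along a generating family of pairwise transverse foliations, an iterated application of the Journ\'e lemma (running over the coarse-root foliations one at a time) shows it agrees a.e.\ with a $C^\infty$ map, and running the same argument on the inverse---whose target is by now known to be a manifold of the correct dimension---shows $\Phi\colon M\to\widehat Q\bs G$ is a $\Gamma$-equivariant diffeomorphism.

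Consequently $M$ is diffeomorphic to the finite cover $\widehat Q\bs G$ of $Q\bs G$, and under this identification $\alpha$ becomes the lift to $\widehat Q\bs G$ of the standard right-action of $\Gamma$ on $Q\bs G$, which is exactly alternative~(b) of the conjecture.

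The hard part will be the regularity bootstrap. Before any of the invariant foliations on $M$ can be glued one needs $\mu$ to have full support and the action on $M$ to be minimal; deducing this from the measurable picture (together with the hypothesis that there is no invariant probability measure) is already nontrivial. The normal-form and Journ\'e arguments must then be pushed through at the low regularity $C^{1+\beta}$, with uniform control of H\"older exponents across the horospherical filtration. Most seriously, promoting the merely measurable, Lyapunov-exponent hyperbolicity furnished by the suspension to genuine uniform partial hyperbolicity for $\alpha$ along the $G^\xi$-directions appears to require tools beyond those used for Theorems~\ref{thm:invmeas} and~\ref{thm:measconjfin}---either the averaging-and-superrigidity machinery of \cite{1608.04995} in the cocompact case, or substantially higher differentiability than $C^{1+\beta}$---which is why the statement is recorded here only as a conjecture.
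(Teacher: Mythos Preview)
The statement you are addressing is a \emph{conjecture} in the paper, not a theorem; the paper offers no proof and records it precisely because the authors do not know how to close the gap between the measurable conclusion of Theorem~\ref{thm:measconjfin} and the smooth conclusion asserted here. So there is no ``paper's own proof'' to compare against.

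Your proposal is honest about this: you correctly identify Theorem~\ref{thm:measconjfin} as the starting point, you sketch the natural regularity-bootstrap strategy (normal forms along coarse-Lyapunov foliations, Journ\'e-type gluing), and in your final paragraph you name exactly the obstructions---full support and minimality of $\mu$, the passage from nonuniform to uniform hyperbolicity along the $G^\xi$-directions, and the low $C^{1+\beta}$ regularity---that prevent this from being a proof. That diagnosis is accurate and matches why the authors left the statement as a conjecture. What you have written is a reasonable research program, not a proof; the genuine gap is the one you yourself flag, namely promoting the measurable Oseledets picture on $M^\alpha$ to uniform partial hyperbolicity for $\alpha$ on $M$, and nothing in the paper (or in your sketch) supplies that step.
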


\subsubsection{Projective factors in intermediate  dimensions}

\def\aut{\mathrm{Aut}}
Let $(X,\nu)$ and $(Z,\mu)$ be  standard measure spaces and suppose $\Gamma$ acts measurably on both $X$ and $Z$ (on the right) and preserves  the measure classes of $\nu$ and $\mu$ respectively.  Let $(Y, \eta)$ be a standard measure space and write $\aut(Y,\eta)$ for the group of invertible, measure-preserving transformations of $(Y,\eta)$.  Let $\alpha$ and $\rho$ denote, respectively, the actions of $\Gamma$ on $(Z,\mu)$ and $(X,\nu)$.
\begin{definition}
We say $\alpha$ is a \emph{relatively measure-preserving extension} (modeled on $(Y,\eta)$) of $\rho$ if there  are
\begin{enumerate}
	\item a measurable cocycle $\psi\colon \Gamma\times (X,\nu)\to \aut (Y, \eta)$ over $ \rho$, and 
	\item an isomorphism of measure spaces $\Phi\colon (Z,\mu) \to (X\times Y, \nu\times \eta)$
\end{enumerate}
such that $\Phi$ intertwines $\alpha$ and the skew action defined by $\psi$: if $\Phi(z) = (x,y)$ then 
$$\Phi( \alpha(\gamma) (z))= \left(\rho(\gamma)(x), \psi(\gamma,x)(y)\right).$$
\end{definition}

\begin{theorem}\label{thm:measconj}
Let $M$ be a manifold with $\dim(M)\le m(G) $.  Then given any $C^{1+\beta}$ action $\alpha$  of $\Gamma$ on $M$ there is an $\alpha$-quasi-invariant Borel probability measure $\mu$ on $M$, a standard parabolic subgroup $Q$, and a Lebesgue space $(Y,\eta)$ such that 
the action $\alpha$ on $(M, \mu)$ is a relatively measure-preserving extension (modeled on $(Y, \eta)$) of the standard right action of $\Gamma$ on $(Q\bs G, m)$.  
\end{theorem}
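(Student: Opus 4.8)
\emph{Setup.} The plan is to analyze $\alpha$ through the induced $G$-action. Form the suspension space $M^\alpha=(G\times M)/\Gamma$, on which $G$ acts on the left and which fibers $G$-equivariantly over $G/\Gamma$ with compact fibers diffeomorphic to $M$. Fix an admissible probability measure on $G$; since the fibers are compact, a Kakutani--Markov argument produces a stationary probability measure $\hat\nu$ on $M^\alpha$ projecting to the Haar probability on $G/\Gamma$. As in the Nevo--Zimmer framework, $\hat\nu$ is associated with a $P$-invariant probability measure on $M^\alpha$ (again projecting to Haar on $G/\Gamma$) and with a measurable $G$-equivariant map from the Furstenberg boundary $G/P$ to $\mathrm{Prob}(M^\alpha)$; restricting to a fiber over the identity coset yields an $\alpha$-quasi-invariant probability $\mu$ on $M$. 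Because $A\subset P$, the $P$-invariant measure is invariant under the Weyl chamber flow of $A$ on $M^\alpha$, so we may consider the fiberwise derivative cocycle over this $A$-action, its fiberwise Lyapunov functionals $\liea\to\R$, and the resulting coarse Lyapunov decomposition of the fiber tangent bundle.

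\emph{The resonance dichotomy.} Call a nonzero fiberwise Lyapunov functional \emph{resonant} with a coarse restricted root $\xi\in\hat\Sigma$ if it is a positive multiple of the restriction of $\xi$ to $\liea$, and let $F\subseteq\hat\Sigma$ be the set of coarse roots resonant with some fiberwise Lyapunov functional. If $F=\emptyset$, then by Proposition \ref{thm:nonresonantimpliesinvariant} there is an $\alpha$-invariant Borel probability measure on $M$, and the theorem holds with $Q=G$. If $F\neq\emptyset$, then, using the bracket relations $[\lieg^\xi,\lieg^\eta]\subseteq\lieg^{\xi+\eta}$ and the $N$- and $\bar N$-quasi-invariance of $\hat\nu$, one shows that the complement $\hat\Sigma\sm F$ is exactly the set of coarse roots whose root spaces lie in a proper standard parabolic subalgebra $\lieq$; thus $F=\{\xi\in\hat\Sigma : \lieg^\xi\not\subset\lieq\}$. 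Since distinct coarse roots have non-proportional restrictions to $\liea$ and each $\xi\in F$ has a resonant fiberwise Lyapunov subspace of positive dimension,
\[ \bar r(\lieq)=\#F\le \dim M\le m(G). \]

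\emph{The projective factor and its refinement.} From the boundary map $b\mapsto\hat\nu_b$ one extracts a maximal measurable projective factor of the $G$-action on $(M^\alpha,\hat\nu)$, and passing to a fiber over $G/\Gamma$ gives a standard parabolic $Q\supseteq P$ together with a $\Gamma$-equivariant measurable factor map $(M,\mu)\to(Q\bs G,m)$ with $m$ of Lebesgue class. It remains to arrange that this extension is relatively measure-preserving. This is where the dimension hypothesis is decisive: were the extension over $Q\bs G$ not relatively measure-preserving, the Nevo--Zimmer analysis would produce a strictly larger projective factor, hence a strictly smaller standard parabolic $Q'\subsetneq Q$ with $\bar r(Q')>\bar r(Q)$; but any standard parabolic of corank at least two has resonant codimension at least $m(G)$, whereas $\bar r(Q')\le\dim M\le m(G)$ throughout, so this refinement cannot pass corank two and must terminate. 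For the resulting $Q$, the extension $(M,\mu)\to(Q\bs G,m)$ is relatively measure-preserving; its fiber is the required Lebesgue space $(Y,\eta)$ and the associated cocycle $\psi\colon\Gamma\times(Q\bs G)\to\mathrm{Aut}(Y,\eta)$ the required measure-preserving cocycle. One also checks that in this dimension range the ergodicity-type hypothesis needed to run the Nevo--Zimmer machinery --- our Claim \ref{pi-part}, which plays the role of the $S$-ergodicity condition of \cite{MR1933077} --- holds for the relevant $P$-invariant measure, again by a resonance count forced by $\dim M\le m(G)$.

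\emph{Main obstacle.} The core difficulty lies in the last two steps: obtaining the \emph{relatively measure-preserving} conclusion, not merely exhibiting $Q\bs G$ as a measurable factor. This rests on (i) Proposition \ref{thm:nonresonantimpliesinvariant}, proved by a nonstationary normal form / invariance-principle argument exploiting that a non-resonant fiberwise Lyapunov functional can be made expanding while every base root is contracting, forcing the $N$-invariant measure to be fiberwise atomic in that direction; (ii) controlling the full coarse-Lyapunov structure of $\hat\nu$ along unstable foliations of generic elements of $A$, so as to show that once the resonant roots have been used to build the map to $Q\bs G$ the residual fiber cocycle carries no net exponential behavior and hence preserves a probability measure $\eta$; and (iii) verifying Claim \ref{pi-part}. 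A further technical burden, present throughout, is handling the interaction of the homogeneous dynamics on $G/\Gamma$ with the fiber dynamics when $\Gamma$ is nonuniform and $M^\alpha$ is noncompact.
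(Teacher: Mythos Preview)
Your outline has the right large-scale architecture---pass to a $P$-invariant measure on the suspension, use Proposition~\ref{thm:nonresonantimpliesinvariant} to get invariance along non-resonant root groups, and bound the corank of the resulting parabolic via $\dim M\le m(G)$---but the step where you establish that the extension over $Q\bs G$ is \emph{relatively measure-preserving} is not a proof. You write that ``were the extension not relatively measure-preserving, the Nevo--Zimmer analysis would produce a strictly larger projective factor,'' and that Claim~\ref{pi-part} ``holds by a resonance count.'' Neither of these is justified. The Nevo--Zimmer results require precisely the kind of ergodicity hypothesis ($P$-mixing, or ergodicity of every nontrivial element of $A$) that is generally impossible to verify; the whole point of the paper is to replace that hypothesis with the dimension constraint, and this replacement is where the work lies.

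Concretely, the paper does not iterate a Nevo--Zimmer refinement. Instead it introduces two parabolics $\Qin(\mu)\subset\Qout(\mu)$ (the stabilizer of $\mu$ and the support of $[\mu^G_p]$) and proves $\Qin(\mu)=\Qout(\mu)$ by splitting into two cases. If every fiberwise exponent is resonant with a \emph{negative} root (Proposition~\ref{thm:resonant}), the fiber conditional measures are finitely supported; this finiteness is what makes Claim~\ref{pi-part} go through, and then an $A'$-recurrence / measure-rigidity argument (Claim~\ref{clm:43}) forces invariance along the offending $G^\xi$. If some fiberwise exponent is not negative-resonant, then there are at most $m(G)-1$ resonant negative coarse roots, so $\Qin(\mu)$ is maximal, and one invokes the Einsiedler--Katok High Entropy Theorem (Proposition~\ref{thm:High entropy}) to produce a nontrivial commutator in $H_p$ and again force $\Qin(\mu)=\Qout(\mu)$. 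Your sketch contains neither the finite-fiber argument nor the high-entropy argument, and your assertion that $\hat\Sigma\smallsetminus F$ coincides with the roots of a parabolic is imprecise: positive roots are always in $\lieq$, whether or not they lie in $F$, so the relevant count is of resonant \emph{negative} coarse roots only. Finally, your description of the proof of Proposition~\ref{thm:nonresonantimpliesinvariant} as a ``normal form / invariance-principle'' argument is off---the paper proves it via the coarse Abramov--Rohlin formula and Ledrappier's entropy rigidity (Lemma~\ref{lem:fullentropyinvariant}).
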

Note in the above  theorem, if $Q= G$ it follows that $\mu$ is $\alpha$-invariant.  
As discussed above, the result in Theorem \ref{thm:measconj} should be compared to results of Nevo and Zimmer, particularly \cite{MR1720183,MR1933077}.  

\section{Suspension construction and its properties} 
\label{sec:2}
We construct an auxiliary   space  on which the  action $\alpha$ of $\Gamma$ on $M$ embeds as a Poincar\' e section for an associated $G$-action.  
On the product $G\times M$ consider the right $\Gamma$-action
	$$ (g,x)\cdot \gamma= (g\gamma , \alpha(\gamma)(x))$$ and the left $G$-action $$a\cdot (g,x) = (ag, x).$$
Define  the quotient manifold $M^\alpha:= G\times M/\Gamma $.  As the  $G$-action on $G\times M$ commutes with the $\Gamma$-action, we have an induced left $G$-action  on $M^\alpha$.  We denote this action by $\td \alpha$.  
We write $\pi\colon M^\alpha\to G/\Gamma$ for the natural projection map.  Note that $M^\alpha$ has the structure of a fiber bundle over $G/\Gamma$ induced by the map $\pi$ with fibers diffeomorphic to $M$.  As the action of $\alpha$ is by $C^{1+\beta}$ diffeomorphism, $M^\alpha$ is naturally a  $C^{1+\beta}$ manifold.  Equip $M^{\alpha}$ with a $C^\infty$ structure compatible with the $C^{1+\beta}$-structure.  

Note that the action $\td \alpha$ of $G$ on $M^\alpha$ preserves two transverse distributions $E^F$ and $E^G$ where $E^F= \mathrm{ker}(D\pi)$ and $E^G$ is   tangent   to the local $G$-orbits on  $M^\alpha$.  Furthermore, these distributions integrate to $C^{1+\beta}$ foliations of $M^\alpha$.  

We first observe
\begin{claim}\label{claim:measureupstairs} There exists an $\alpha$-invariant Borel probability measure on $M$ if and only if there exists an  $\td \alpha$-invariant  Borel probability measure on $M^\alpha$.  
\end{claim}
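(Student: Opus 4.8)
The plan is to compare invariant measures on $M^\alpha$ with invariant measures on $M$ by passing through the $\Gamma$-cover $G\times M$.

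\emph{From $M$ upward.} Suppose $\mu$ is an $\alpha$-invariant Borel probability measure on $M$, and fix a Haar measure $\mathrm{vol}_G$ on $G$ (bi-invariant, since $G$ is semisimple and hence unimodular). The product $\mathrm{vol}_G\times\mu$ on $G\times M$ is invariant under the left $G$-action $a\cdot(g,x)=(ag,x)$; moreover, because $\mathrm{vol}_G$ is right-invariant and $\mu$ is $\alpha(\Gamma)$-invariant, it is also invariant under the right $\Gamma$-action $(g,x)\cdot\gamma=(g\gamma,\alpha(\gamma)(x))$. This $\Gamma$-action is free and properly discontinuous, so $\mathrm{vol}_G\times\mu$ descends to a Borel measure $\hat\mu$ on $M^\alpha=(G\times M)/\Gamma$, and $\hat\mu$ is $\td\alpha$-invariant since its lift is left $G$-invariant. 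Its total mass equals $\mathrm{vol}_G(D)\cdot\mu(M)=\mathrm{vol}(G/\Gamma)$ for any Borel fundamental domain $D\subset G$ for the $\Gamma$-action, and this is finite precisely because $\Gamma$ is a lattice; normalizing gives a $\td\alpha$-invariant Borel probability measure on $M^\alpha$.

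\emph{From $M^\alpha$ downward.} Conversely, let $\hat\mu$ be a $\td\alpha$-invariant Borel probability measure on $M^\alpha$ and let $p\colon G\times M\to M^\alpha$ be the covering map. Since $p$ is the quotient by a free, properly discontinuous $\Gamma$-action, there is a unique locally finite $\Gamma$-invariant Borel measure $\nu$ on $G\times M$ with $\nu(B)=\hat\mu(p(B))$ whenever $p$ is injective on $B$; as the left $G$-action commutes with the right $\Gamma$-action and $p$ intertwines it with $\td\alpha$, the measure $\nu$ is also left $G$-invariant. Its pushforward under the first projection $G\times M\to G$ is a left-invariant measure on $G$ that is finite on fundamental domains, hence a positive multiple of $\mathrm{vol}_G$; disintegrating over $G$ we may write $\nu=\int_G(\delta_g\times\mu_g)\,d\mathrm{vol}_G(g)$ with $\mu_g$ a finite measure on $M$ for $\mathrm{vol}_G$-a.e.\ $g$ (the $\mathrm{vol}_G$-integral of $\mu_g(M)$ over a fundamental domain is $\hat\mu(M^\alpha)=1$). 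Left $G$-invariance of $\nu$ together with uniqueness of disintegrations gives $\mu_{ag}=\mu_g$ for $\mathrm{vol}_G$-a.e.\ $g$, for every $a\in G$; since left translation acts transitively, hence ergodically, on $(G,\mathrm{vol}_G)$, the family $(\mu_g)$ is $\mathrm{vol}_G$-essentially constant, say $\mu_g=\mu_0$ a.e., for a nonzero finite measure $\mu_0$ on $M$. Finally, right $\Gamma$-invariance of $\nu$, under which $\{g\}\times M$ is carried to $\{g\gamma\}\times M$ by $\alpha(\gamma)$, forces $\alpha(\gamma)_*\mu_g=\mu_{g\gamma}$ for a.e.\ $g$; combined with $\mu_g\equiv\mu_0$ this yields $\alpha(\gamma)_*\mu_0=\mu_0$ for all $\gamma\in\Gamma$, so $\mu_0/\mu_0(M)$ is the desired $\alpha$-invariant Borel probability measure on $M$.

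\emph{Main point.} This is a bookkeeping statement and there is no serious obstacle; the only care needed is measure-theoretic, in the second part: checking that the $\Gamma$-invariant lift $\nu$ exists and inherits left $G$-invariance, and upgrading the a.e.\ identities $\mu_{ag}=\mu_g$ to genuine essential constancy of $(\mu_g)$ via ergodicity of left translation on $(G,\mathrm{vol}_G)$. The only place the hypotheses enter is the finiteness $\mathrm{vol}(G/\Gamma)<\infty$ used in the first part, which is exactly the lattice condition.
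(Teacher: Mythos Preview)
Your proof is correct and follows the standard approach; the paper itself does not give a detailed argument but merely declares the forward direction ``standard'' and cites \cite[Lemma 6.1]{MR1720183} for the reverse, so your write-up is in fact more complete than the paper's own treatment.
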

\noindent That an $\alpha$-invariant    measure on $M$ induces an $\td \alpha$-invariant      measure on $M^\alpha$ is standard.  
For the reverse implication, see, for instance, \cite[Lemma 6.1]{MR1720183}.  
Note that any $\td\alpha$-invariant measure on $M^\alpha$ projects under $\pi$  to the Haar measure on $G/\Gamma$.   

As the suspension space $M^\alpha$ is   non-compact in the case that $\Gamma$ is non-uniform, some care is needed when applying tools from smooth ergodic theory to the $G$-action on $M^\alpha$.  Indeed, although  the non-compactness comes from  the homogeneous factor, care is needed in order to control the fiber-wise dynamics as the corresponding $C^1$- and $C^{1+\beta}$-norms of the fiberwise dynamics need not be bounded.

Below, we use the quasi-isometry between the Riemannian and word metrics on $\Gamma$ established in   \cite{MR1828742} to  control the degeneration of the fiber-wise dynamics.  
We follow the approach of \cite{AWB-GLY-P1} and construct dynamical charts relative to which the tools of classical smooth ergodic theory   
 may be applied.  The remainder of this section is devoted to constructing a Riemannian metric on $TM^\alpha$, corresponding distance function $d$, and a family of dynamical charts.   

The reader interested only in actions of cocompact lattices may skip the remainder of this section.  

\subsection{Construction of a fundamental domain and family of fiber metrics} \label{sec:FD} 
Recall our standing assumptions on the Lie group $G$ and the lattice $\Gamma$.  
A set $D\subset G$ is a \emph{fundamental domain} for $\Gamma$ if $\bigcup_{\gamma\in \Gamma} D\gamma = G$ and if the natural map $G\to G/\Gamma$ is one-to-one on $D$.  
A Borel set $D\subset G$ is \emph{almost-open} if the interior of $D$ has full measure in the closure of $D$.  
$S\subset G$ is a \emph{fundamental set} if  $\bigcup_{\gamma\in \Gamma} S\gamma = G$ and  the set $\{ \gamma : S\gamma \cap S\neq \emptyset\}$ is finite.
The injectivity radius $r^\Gamma(g)$ of $\Gamma$ at a point $g\in G$ is the largest $0<r\le 1$ such that the map $\lieg\to G/\Gamma$ given by  $X\mapsto \exp_{\lieg}(X) g\Gamma$ is injective on $$\{ X\in \lieg : \| X\|<r\}.$$  
We write $$V_r(g) := \{ \exp_{\lieg}(X) g\Gamma : \|X\|\le r\}$$ for the remainder.  

Our goal below is to build on $TM$ a 
 family of continuous Riemannian metrics $\langle \cdot , \cdot \rangle_g$, parameterized by $g\in G$, and an almost-open, Borel fundamental domain $D\subset G$  for $\Gamma$ such that  
\begin{enumerate}
\item the family of metrics $\langle \cdot , \cdot \rangle_g$ depends continuously 
on $g\in G$;
\item
the family   $\langle \cdot , \cdot \rangle_g$  is \emph{$\Gamma$-equivariant}: given $\gamma\in \Gamma$ and $v,w \in T_x M$
$$\langle v , w \rangle_g = \langle D_x \alpha (\gamma) v  , D_x \alpha (\gamma) w \rangle_{g\gamma};$$
\item writing $$V= \bigcup _{g\in D} V_{r^\Gamma(g)} (g),$$   the family $\langle \cdot , \cdot \rangle_g$ is uniformly comparable on $V$: there is a $C>0$ so that for all $g, \bar g\in V, x\in M,$ and $v,w\in T_xM$ 
$$ \langle v , w \rangle_g \le C \langle v , w \rangle_{\bar g};$$
\item for every $p\ge 1$ the function $g\mapsto d_G(e,g)$ is $L^p$ on $D$ with respect to the Haar measure where $d_G(\cdot,\cdot)$  is the right-invariant metric on $G$.  
\end{enumerate}

Note that given a finite-index subgroup $\Gamma '\subset \Gamma$, a fundamental domain $D'$ for $\Gamma'$  and a $\Gamma'$-equivariant family of metrics which satisfy (1)--(4) above for $\Gamma'$, then we can choose a fundamental domain $D\subset D'$ for $\Gamma$ and construct a $\Gamma$-equivariant family of metrics satisfying (1)--(4) for $\Gamma$.  Below, we will pass to a finite-index subgroup $\Gamma '\subset \Gamma$ and  construct such a domain and family of metrics for $\Gamma'$.

To build such a family, first note that by quotienting by any  compact factors and the center of $G$, we obtain a surjective homomorphism with compact kernel $\Psi\colon G\to \bar G$ where $\bar G$ is semisimple, without any compact factors, and has  trivial center.  Moreover, the image $\bar \Gamma:= \Psi(\Gamma)$ is a lattice in $\bar G$. 
From the Margulis Arithmeticity Theorem \cite{MR1090825}, it follows that there is a semisimple, linear algebraic group ${\bf H}$ such that setting 
 $H= {\bf H}(\R)^\circ$ there is a surjective homomorphism $\Phi \colon H^\circ \to \bar G$ with compact kernel  such that $$\bar \Gamma\cap \Phi ({\bf H} (\Z))$$
has finite index in $\bar \Gamma$.  
Let $\hat \Gamma = \Phi\inv (\bar \Gamma)\cap {\bf H}(\Z)\cap H$.  Then $\hat \Gamma$ has finite index in ${\bf H}(\Z)$ and is hence arithmetic.   Replacing $\Gamma$ and $\hat \Gamma$ with  finite index subgroups,   we may assume that $\hat \Gamma$ is torsion-free and {neat} (see \cite{MR2189882} for definition), and that $\Gamma$ and $\hat \Gamma$ map surjectively onto $\bar \Gamma$.  Let $N$ denote the kernel of $\Psi\colon \Gamma\to \bar \Gamma$.  Note that $N$ is finite. 

We will use $\bar \Gamma$ to  build  a family of metrics and domain $D$.  Note that while $\bar \Gamma$ may not act on $M$, $\bar \Gamma$ acts on the space of $N$-invariant Riemannian metrics on $TM$.

\subsubsection{Compactification of  $H/\hat \Gamma$}
Let $\hat K\subset H$ be a maximal compact subgroup.  We may assume that $\hat K$ contains the kernel of the map $\Phi\colon H\to \bar G$.  Let $X$ denote the symmetric space $\hat K \bs H$.  
Following \cite[III, Chapter 9]{MR2189882}, write  $\overline X^{BS}$ for  the \emph{Borel-Serre partial compactification} of $X$.   $\overline X^{BS}$ has the structure of a real-analytic manifold with corners.  The action of $\hat \Gamma $ on $X$ extends to a continuous, proper action on $\overline X^{BS}$.  Moreover the quotient $\overline X^{BS}/\hat \Gamma $ is a compact, Hausdorff space.  
Furthermore (having taken  $\hat \Gamma $ to be neat) the quotient $\overline X^{BS}/\hat \Gamma $ has the structure of a real-analytic manifold with corners.  

\subsubsection{Parameterized families of metrics} As $\hat \Gamma$ maps surjectively onto $\bar \Gamma$, it follows that $\hat \Gamma$ acts on the space of $N$-invariant Riemannian metrics on $M$.  Consider $\overline X^{BS} \times TM$.  
As the manifold with corners $\overline X^{BS}/\hat \Gamma $ admits partitions of unity, by selecting a $\hat \Gamma$-equivariant partition of unity on $\overline X^{BS} $ subordinate to a cover by sets of the form $V_{r^{\hat \Gamma}(g)}(g)$, given any fixed $N$-invariant metric on $TM$ we may construct a $\hat \Gamma$-equivariant, continuous family of $N$-invariant, H\"older continuous Riemannian metrics $\langle \cdot, \cdot\rangle_x$ on $TM$ parametrized by points of $x\in \overline X^{BS}$.  
As $\hat K$  contains   the kernel of $\Phi$, given $\bar g\in \bar G$ we associate a Riemannian metric on $TM$ by $$\langle \cdot, \cdot\rangle_{\bar g} =   \langle \cdot, \cdot\rangle_{\Phi\inv (\bar g)}. $$ 
The family $\{\langle \cdot, \cdot\rangle_{\bar g}: \bar g\in \bar G\}$ is continuous in $\bar g$ and is $\bar \Gamma$-equivariant.  
Finally, to  $g\in G$ we associate a Riemannian metric on $TM$ by $$\langle \cdot, \cdot\rangle_{g} =   \langle \cdot, \cdot\rangle_{\Psi (g)}. $$
The family $\{ \langle \cdot, \cdot\rangle_{g} : g\in G\}$ is continuous in $g$ and $\Gamma$-equivariant.  

\subsubsection{Construction of fundamental domain and verification of its properties}
Let $S\subset H$ be a Siegel  fundamental set for $\hat \Gamma $ in $H$ containing $e$; that is (see for instance \cite[VIII.1]{MR1090825})
\begin{enumerate}
\item $\bigcup_{\gamma\in \hat \Gamma} S\gamma = H$;
\item the set $\{ \gamma : S\gamma \cap S\neq \emptyset\}$ is finite;
\item the function $g\mapsto d_H(e,g)$ is $L^p$ on $S$ with respect to the Haar measure for every $1\le p<\infty$. 
\item  
$S$ is left $K$-invariant.
\end{enumerate}
 We say an element of $h\in S$ is \emph{well-positioned in $S$} if, denoting injectivity radius of $H/\hat \Gamma$ at $h\in H$     by  $r^{\hat \Gamma}(h) $,   we have that $V_{r^{\hat \Gamma}(h)}( h) \subset S$.
 Enlarging the cusp parameters of $S$, the set $S$ can be chosen so that 
  \begin{enumerate}[resume]
\item   the well-positioned elements of $S$ form a fundamental set for $\hat \Gamma$.
\end{enumerate}
We will moreover assume $e\in S$ is well-positioned.

An additional property of  $S$ that follows from the construction of the Borel-Serre compactifications is that  $\hat K\bs S$ has compact closure    in $\overline X^{BS}$.  It follows that the set $\{  \langle \cdot, \cdot\rangle_{h} : h\in S\}$ is a uniformly comparable  family of metrics. 

Let $ S' \subset S$ denote the set of well-positioned element of $S$.  Write  $\bar S\subset \bar G$ for  $\bar S = \Phi(  S')$.  Then $\bar S $ is    fundamental set for $\bar \Gamma$ in $\bar G$.  Moreover, as $S$ is $\hat K$-saturated we have that  $\Phi\inv (\bar S) \subset S$.  
It follows that  $\{ \langle \cdot, \cdot\rangle_{\bar g} : \bar g\in \bar S\}$ is a uniformly comparable family of metrics.  Finally let $\td S= \Psi\inv \bar S$.  
Then 
\begin{enumerate}
\item $\td S$ is a fundamental set for $\Gamma$ in $G$;
\item the family $\{ \langle \cdot, \cdot\rangle_{g} : g\in \td S\}$ is a uniformly bounded family of metrics;
\item the family  $\{ \langle \cdot, \cdot\rangle_{g} : g\in G\}$ is $\Gamma$-equivariant and continuous;
\item as all quotients and extensions are by compact kernels, it follows that $g\mapsto d_G(e, g)$ is $L^p$ on $\td S$ with respect to Haar measure;
\item for $g\in \td S$, the image $\Psi(V_{r^\Gamma(g)}(g))$ is contained in $\Phi(S)$
\end{enumerate}
Let $D\subset \td S$ be an almost-open, Borel, fundamental domain for $\Gamma$ in $G$ containing $e$.  Then the desired properties of the family $ \langle \cdot, \cdot\rangle_{g} $ and the fundamental domain $D$ hold.  

\subsubsection{Induced distance on $M^\alpha$}\label{sec:metric}
Using the $\Gamma$-equivariant family of metrics $\{\langle \cdot,\cdot \rangle_g: g\in G\}$ constructed above  and using the right invariant metric on $G$, we endow $G\times M$ with a continuous Riemannian metric such that $\Gamma$ acts by isometries.  This   induces a Riemannian metric on $TM^\alpha$ and corresponding distance function $d(\cdot,\cdot)$ on $M^\alpha$.


\subsection{Some  estimates}
Equip $M$ with any $C^\infty$ Riemannian metric; by compactness, all estimates are independent of the choice of metric.  
Let $\exp_x\colon T_x M\to M$ be the Riemannian exponential map at $x$ and fix  $r_0\le 1$ to be smaller than the injectivity radius of $M$.


Write $B_x(r) \subset T_xM$ for the norm ball $B_x(r) = \{ v\in T_xM : \| v\| < r\}$.
Given a diffeomorphism $f\colon M\to M$ let $$\td f_x \colon U_{x,f}\subset  B_x(r_0)\subset T_xM \to B_{f(x)}(r_0)\subset T_{f(x)}M$$ be the diffeomorphism defined by $$\td f_x:= \exp_{f(x)} \inv \circ f \circ \exp_x $$  on the maximal domain $U_{x,f}$ on which it is defined.  
Given $U\subset U_{x,f}$ 
define the local $C^1$ and  H\"older norms of $\restrict{\td f_x}U\colon U\to B_x(r_0) \subset T_{f(x)}M $ to be 
   $$\|D \td f_x \|_U = \sup_{ v\in U } \|D_v\td f_x\|, \quad \quad   \Hol_U (D\td f_x):= \sup_{v\neq w\in U} \dfrac{\|D_v\td f_x-  D_w\td f_x\|}{\|v-w\|^\beta}.$$

If $f\colon M\to M$ is $C^{1+\beta}$, define
\begin{enumerate}
\item $\|Df\|  := \sup_{x\in M} \|D \td f_x \|_{U_{x,f}}$, 
and 
\item $\hol (Df) := \sup_{x\in M} \Hol_{U_{x,f}} (D\td f_x)$.
\end{enumerate}
Compactness of $M$ ensures $\|Df\|$ and $\hol (Df)$ are finite.  

We have the following elementary estimate. 
\begin{claim} 
Let $f,g\in \Diff^{1+\beta}(M)$.  Given $x\in M$ and $U\subset U_{x,g}\subset T_xM$ such that $$\td g_x(U)\subset U_{g(x),f}$$ 
we have 
$$\Hol _U(D\wtd{(f\circ g)}_x) \le \|Df\|   \Hol ( D  g ) +  \|Dg\|  ^{1+\beta} \Hol ( D  f ).$$
\end{claim}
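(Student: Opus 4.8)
The plan is to unravel the definition of $\wtd{(f\circ g)}_x$ via the chain rule and then apply the triangle inequality in a way that isolates two terms, one controlled by the H\"older constant of $Dg$ and one by the H\"older constant of $Df$. By definition, $\wtd{(f\circ g)}_x = \exp_{f(g(x))}^{-1}\circ f\circ g\circ \exp_x$, while $\td g_x = \exp_{g(x)}^{-1}\circ g\circ \exp_x$ and $\td f_{g(x)} = \exp_{f(g(x))}^{-1}\circ f\circ \exp_{g(x)}$. Hence on the relevant domain we have the clean composition identity $\wtd{(f\circ g)}_x = \td f_{g(x)}\circ \td g_x$, so that $D_v\wtd{(f\circ g)}_x = D_{\td g_x(v)}\td f_{g(x)}\circ D_v\td g_x$. (I am suppressing the fact that in general the Riemannian exponential introduces a genuine distortion, but the intent of the paper's normalization is precisely that these charts compose; if one wants to be careful one should either note that the estimate is an elementary consequence once one works with the charts as given, or absorb the distortion of $\exp$ into constants — I expect the paper treats it as the former, comparing to a fixed background metric on the compact $M$.)

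The main computation is then the standard estimate for the H\"older seminorm of a composition of two $C^{1+\beta}$ maps between normed balls. First I would write, for $v\neq w\in U$,
\begin{align*}
D_v\wtd{(f\circ g)}_x - D_w\wtd{(f\circ g)}_x
&= D_{\td g_x(v)}\td f_{g(x)}\circ D_v\td g_x - D_{\td g_x(w)}\td f_{g(x)}\circ D_w\td g_x\\
&= D_{\td g_x(v)}\td f_{g(x)}\circ\bigl(D_v\td g_x - D_w\td g_x\bigr)\\
&\quad + \bigl(D_{\td g_x(v)}\td f_{g(x)} - D_{\td g_x(w)}\td f_{g(x)}\bigr)\circ D_w\td g_x.
\end{align*}
Taking operator norms and using submultiplicativity gives two terms. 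The first is bounded by $\|D_{\td g_x(v)}\td f_{g(x)}\|\cdot\|D_v\td g_x - D_w\td g_x\| \le \|Df\|\cdot\Hol(Dg)\cdot\|v-w\|^\beta$. The second is bounded by $\|D_{\td g_x(v)}\td f_{g(x)} - D_{\td g_x(w)}\td f_{g(x)}\|\cdot\|D_w\td g_x\| \le \Hol(Df)\cdot\|\td g_x(v)-\td g_x(w)\|^\beta\cdot\|Dg\|$. Now $\|\td g_x(v)-\td g_x(w)\| \le \|Dg\|\cdot\|v-w\|$ by the mean value inequality (the segment from $v$ to $w$ lies in the convex set $U$, or one shrinks to a ball), so $\|\td g_x(v)-\td g_x(w)\|^\beta \le \|Dg\|^\beta\|v-w\|^\beta$, giving the second term the bound $\|Dg\|^{1+\beta}\Hol(Df)\|v-w\|^\beta$. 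Dividing by $\|v-w\|^\beta$ and taking the supremum over $v\neq w\in U$ yields exactly $\Hol_U(D\wtd{(f\circ g)}_x)\le \|Df\|\,\Hol(Dg) + \|Dg\|^{1+\beta}\,\Hol(Df)$.

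The one point requiring a little care — and the place I expect the only genuine obstacle — is justifying the composition identity $\wtd{(f\circ g)}_x = \td f_{g(x)}\circ\td g_x$ on the stated domain, namely checking that the hypothesis $\td g_x(U)\subset U_{g(x),f}$ is exactly what makes both sides defined and equal on $U$, and more subtly that the Riemannian exponential charts are compatible enough for this to hold verbatim (it does hold as an identity of maps, since $\exp_{g(x)}\circ\exp_{g(x)}^{-1}$ is the identity where defined). Once that identity is in hand, everything else is the routine product-rule/mean-value estimate above, with all quantities finite by compactness of $M$. I would also note explicitly that $U\subset U_{x,g}$ guarantees $\td g_x$ is defined and $C^{1+\beta}$ on $U$, and that the codomain condition places $\td g_x(U)$ inside the domain where $\td f_{g(x)}$ is defined and $C^{1+\beta}$, so the seminorms $\Hol(Df)$, $\Hol(Dg)$ and the norms $\|Df\|$, $\|Dg\|$ (which are global suprema) legitimately dominate the local quantities appearing in the argument.
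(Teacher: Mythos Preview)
Your proof is correct and follows essentially the same route as the paper's: establish the chart identity $\wtd{(f\circ g)}_x=\td f_{g(x)}\circ\td g_x$, apply the chain rule, split the difference by inserting a cross term, and bound the two pieces using $\|Df\|$, $\Hol(Dg)$, $\Hol(Df)$, and the mean-value estimate $\|\td g_x(v)-\td g_x(w)\|\le\|Dg\|\,\|v-w\|$. The only cosmetic difference is that the paper evaluates on a unit vector $\xi$ rather than working directly with operator norms.
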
\begin{proof}

For $v,u\in U$ and $\xi$ with $\|\xi\|=1$  
\begin{align*}\| D_v & \wtd{(f\circ g)}_x)\xi -  D_u \wtd{(f\circ g)}_x)\xi  \|  = 
\|D_{\td g_x (v) } \td f_{g(x)} D_v \td g_x  \xi - D_{\td g_x (u) }  \td f_{g(x)} D_u \td g_x  \xi \| \\&
\le \|D_{\td g_x (v) } \td f_{g(x)} D_v \td g_x  \xi  -D_{\td g_x (v) }  \td f_{g(x)} D_u \td g_x  \xi \| 
\\&
\quad \quad +\|D_{\td g_x (v) }  \td f_{g(x)} D_u \td g_x  \xi  - D_{\td g_x (u) }  \td f_{g(x)} D_u \td g_x  \xi \| 
\\&
\le \|D\td f_{g(x)}\| 
\|D_v \td g_x     - D_u \td g_x   \| 
+ 
 \|D\td g_x\| 
 \|D_{\td g_x (v) }  \td f_{g(x)}    - D_{\td g_x (u) }  \td f_{g(x)}   \| 
   \\&
\le \|Df\| \Hol_U ( D \td  g_x )d(u,v)^\beta+ \|Dg\|   \Hol _{\td g_x (U)} ( D \td f_{g(x)}  ) d\left (\td g_x (v),\td g_x (u)\right)^\beta \\&
\le \|Df\| \Hol_U ( D \td  g_x )d(u,v)^\beta+ \|Dg\| ^{1+\beta}   \Hol _{\td g_x (U)} ( D \td f_{g(x)}  ) d(u,v)^\beta.   \qedhere
\end{align*}

\end{proof}

In particular, we have the following.  
\begin{claim}\label{claim:2.5}
Let $g_i\in \Diff^{1+\beta}(M)$, $i= \{1, 2, \dots, \ell\}$ and fix $C$ with $\|  D g_i\| \le C $ and $\Hol ( D  g_i)\le C $.  Given $n\ge 0$ and    $$U\subset B_x (C^{-n} r_0) \subset T_xM$$ with $h = g_{i_1} \circ \dots\circ g_{i_n}$ 
we have 
 \begin{enumerate}
\item $\|D \td h_x\|_U \le C^n$ and 
\item $\Hol _U(D\td{h}_x)\le n C^{n(1+\beta)}$ for every $x$.  
\end{enumerate}
\end{claim}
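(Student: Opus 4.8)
The plan is to prove Claim~\ref{claim:2.5} by induction on $n$, repeatedly applying the previous claim. First I would record the base case: for $n=0$ we have $h=\mathrm{id}$, so $\|D\td h_x\|_U\le 1 = C^0$ (assuming, as we may, $C\ge 1$) and $\Hol_U(D\td h_x)=0$, so both bounds hold trivially.

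For the inductive step, suppose the bounds hold for compositions of $n-1$ of the $g_i$. Write $h = g_{i_1}\circ\dots\circ g_{i_n} = f\circ g$ where $f = g_{i_1}$ and $g = g_{i_2}\circ\dots\circ g_{i_n}$ is a composition of $n-1$ maps. The key point is a domain bookkeeping argument: if $U\subset B_x(C^{-n}r_0)$, then by the $C^1$-bound from the inductive hypothesis applied to $g$ on the ball $B_x(C^{-(n-1)}r_0)\supset U$, we get $\td g_x(U)\subset B_{g(x)}(C^{n-1}\cdot C^{-n}r_0) = B_{g(x)}(C^{-1}r_0)\subset U_{g(x),f}$, so the composition formula of the previous claim applies. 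The $C^1$-bound for $h$ then follows from the chain rule: $\|D\td h_x\|_U\le \|Df\|\cdot\|D\td g_x\|_U\le C\cdot C^{n-1} = C^n$, using $\|Df\|\le C$ and the inductive $C^1$-bound. For the H\"older bound, the previous claim gives
\begin{align*}
\Hol_U(D\td h_x) &\le \|Df\|\,\Hol_{\td g_x(U)}(D\td f_{g(x)}) \cdot(\text{domain factor}) + \|D\td g_x\|_U^{1+\beta}\,\Hol_U(D\td g_x)\\
&\le C\cdot C + C^{(n-1)(1+\beta)}\cdot (n-1)C^{(n-1)(1+\beta)}.
\end{align*}
Here one must be slightly careful: the previous claim is stated for the global norms $\|Df\|,\|Dg\|,\Hol(Dg),\Hol(Df)$, but its proof in fact shows the local inequality $\Hol_U(D\wtd{(f\circ g)}_x)\le \|Df\|\,\Hol_U(D\td g_x) + \|D\td g_x\|_U^{1+\beta}\,\Hol_{\td g_x(U)}(D\td f_{g(x)})$, and I would invoke it in that local form. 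Plugging in the inductive bounds $\Hol_U(D\td g_x)\le (n-1)C^{(n-1)(1+\beta)}$, $\|D\td g_x\|_U\le C^{n-1}$, and the crude bounds $\|Df\|\le C$, $\Hol_{\td g_x(U)}(D\td f_{g(x)})\le C$, this yields
\[
\Hol_U(D\td h_x)\le C^2 + (n-1)C^{2(n-1)(1+\beta)} \le C^{n(1+\beta)} + (n-1)C^{n(1+\beta)} = nC^{n(1+\beta)},
\]
where the last inequality uses $C\ge 1$, $\beta>0$, $n\ge 1$ to absorb $C^2\le C^{n(1+\beta)}$ and $2(n-1)(1+\beta)\le n(1+\beta)$ when $n\ge 2$ (the $n=1$ case being the previous claim's single-map bound, i.e.\ $\Hol_U(D\td g_{i_1,x})\le C = 1\cdot C^{1\cdot(1+\beta)}$ up to the same $C\ge1$ convention).

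The main obstacle, such as it is, is the domain tracking: one has to verify at each stage of the induction that the shrunken ball $B_x(C^{-n}r_0)$ really is pushed by $\td g_x$ into the domain where the next map's local exponential chart is defined, so that the composition formula of the preceding claim is legitimately applicable. This is exactly why the statement quantifies over $U\subset B_x(C^{-n}r_0)$ rather than all of $B_x(r_0)$; the geometric-series contraction of radii by powers of $C$ is what makes the bookkeeping close up. Everything else is a routine application of the chain rule and the submultiplicative/additive H\"older estimate already established, together with the convention $C\ge 1$ (which we may assume, enlarging $C$ if necessary) to make the various $C$-power comparisons go through.
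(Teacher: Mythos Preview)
Your approach by induction on $n$ is exactly the intended one; the paper offers no proof of Claim~\ref{claim:2.5} beyond the words ``In particular, we have the following,'' so a routine induction from the preceding claim is all that is expected. Your domain bookkeeping is correct.

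However, your H\"older calculation contains a genuine arithmetic error. In your final display you assert
\[
\Hol_U(D\td h_x)\le C^2 + (n-1)C^{2(n-1)(1+\beta)},
\]
and then try to bound the second term using $2(n-1)(1+\beta)\le n(1+\beta)$. That inequality is \emph{false} for every $n\ge 3$ (for $n=3$ it reads $4(1+\beta)\le 3(1+\beta)$). The mistake comes from swapping the roles of $f$ and $g$ in the two H\"older terms: you have effectively computed $\|Df\|\,\Hol(Df) + \|Dg\|^{1+\beta}\Hol(Dg)$ instead of $\|Df\|\,\Hol(Dg) + \|Dg\|^{1+\beta}\Hol(Df)$.

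The correct substitution into the local form of the preceding claim, with $f=g_{i_1}$ a single map and $g$ the composition of $n-1$ maps, gives
\begin{align*}
\Hol_U(D\td h_x) &\le \|Df\|\,\Hol_U(D\td g_x) + \|D\td g_x\|_U^{1+\beta}\,\Hol_{\td g_x(U)}(D\td f_{g(x)})\\
&\le C\cdot (n-1)C^{(n-1)(1+\beta)} + \big(C^{n-1}\big)^{1+\beta}\cdot C\\
&= n\,C^{(n-1)(1+\beta)+1} \le n\,C^{n(1+\beta)},
\end{align*}
the last step using $(n-1)(1+\beta)+1\le n(1+\beta)$, i.e.\ $1\le 1+\beta$, together with $C\ge 1$. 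With this correction your proof is complete.
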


\subsection{Construction of dynamical charts}\label{sec:charts}

 Let $D\subset G$ be the almost open,  fundamental domain for $\Gamma$ constructed in Section \ref{sec:FD}.   
In the sequel, we often  use the measurable parameterization  $ D\times M$  of $M^\alpha = (G\times M)/\Gamma$.

Fix a globally defined, Borel family of isometric identifications $\tau_x\colon T_xM \to \R^n$.   
With respect to any fixed background $C^\infty$ Riemannian metric on $M$, let  $\exp_x\colon T_xM\to M$ denotes the Riemannian exponential map at $x$ and  $r_0$ denote the injectivity radius of $M$. 
	Let $\R^k = \lieg\oplus \R^n$  be equipped with the product Euclidean metric where  $k= \dim G + \dim M$.  

 Given $p=(g,x)\in D\times M$ let $\rho(g) = \frac 1 2  \min\{r^\Gamma(g), r_0\}$ and let 
$$\phi_{p} \colon \R^k (\rho (g))   \to M^\alpha$$ be the natural embedding
	$$\phi_p \colon (X, v) \mapsto \left(\exp(X) g, \exp_x(\tau_x\inv v)\right)/\Gamma$$
where   we write  $\R^k(r):= \{ v\in \R^k : \|v\| <r\}.$
 We immediately verify that, relative to the induced   metric in \ref{sec:metric}, the charts $\phi_{(g,x)}$ are $C^1$ with $\|D\phi_{(g,x)}\|$   uniformly bounded; in particular relative to the distance function $d$ in  \ref{sec:metric} the charts are uniformly  bi-Lipschitz.  
As the injectivity radius $r^\Gamma(g\Gamma)$ is comparable to the distance from $g\Gamma$ to a fixed base point $x_0\in G/\Gamma$ we have
that $g\mapsto -\log(\rho(g))$ is $L^q$ with respect to the Haar measure for all $1\le q<\infty$.

Recall we let $A$ be the analytic subgroup of $G$ corresponding to $\liea$.  Fixing a basis for $\liea$, via the parameterization   $\exp\colon \liea\to A$ we identify $A$ with $\R^d$ where $d\ge 2$ is the rank of $G$.  Below, we consider an arbitrary lattice  $\Z^d\subset A$  and fix a finite, symmetric, generating set $F= \{ s_j : 1\le j\le m\}$ for $\Z^d$.

Following the notation of \cite{AWB-GLY-P1}, we let $U= U_0= \Lambda=  D\times M= M^\alpha$ for any such $\Z^d$ and $F$.    

In the sequel, we will be concerned with $A$-invariant measures $\mu$ on $M^\alpha$ that project to the Haar measure on $G/\Gamma$. 
Note  in the case that $G$ has compact factors,  the Haar measure on $G/\Gamma$ need not be $A$-ergodic.  However,   from the pointwise ergodic theorem we have the following.
\begin{claim}\label{claim:ergdecom}
Almost every $A$-ergodic component of the Haar measure on $G/\Gamma$ is $G'$-invariant.  
\end{claim}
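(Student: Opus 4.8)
The plan is to identify, on $(G/\Gamma,m)$ with $m$ the normalized Haar (i.e.\ $G$-invariant) probability measure, the $\sigma$-algebra of $A$-invariant Borel sets with that of $G'$-invariant Borel sets, and then read off the claim by comparing ergodic decompositions. First I would record the easy inclusions. Any compact ideal of $\lieg$ is contained in $\liek$, so $\liea\subset\liep\subset\lieg'$ and $\lieg^\beta\subset\lieg'$ for every $\beta\in\Sigma$; hence $A$ and all root groups lie in $G'$, the measure $m$ is $G'$-invariant, and, writing $\calI_H$ for the $\sigma$-algebra of $H$-invariant Borel subsets of $G/\Gamma$ (all equalities and inclusions of these $\sigma$-algebras being understood modulo $m$-null sets), the chain $\langle a\rangle\subset A\subset G'$ for a fixed $a\in A$ gives $\calI_{G'}\subset\calI_A\subset\calI_a$.

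The core step is a Mautner argument giving the reverse inclusion $\calI_a\subset\calI_{G'}$ for a suitable $a$. Fix a regular element $a=\exp(H_0)\in A$ with $\beta(H_0)>0$ for all $\beta\in\Sigma_+$, and set $N^+=\exp(\lien)$, $N^-=\exp(\theta\lien)$. Then $a^{-n}ua^n\to e$ for $u\in N^+$ and $a^nua^{-n}\to e$ for $u\in N^-$, so the Mautner phenomenon, applied to the unitary representation of $G'$ on $L^2(G/\Gamma,m)$, shows that every $a$-invariant function is $N^+$- and $N^-$-invariant, hence invariant under $\langle N^+,N^-\rangle$. I would then observe that the Lie subalgebra of $\lieg'$ generated by $\lien\oplus\theta\lien$ is an ideal of $\lieg'$ (it is normalized by $\liem\oplus\liea$ and by every root space, hence by all of $\lieg'$) which contains each bracket $[\lieg^\beta,\lieg^{-\beta}]$, hence all coroots, hence $\liea$; since $\lieg'$ has no compact factor, each of its simple ideals meets $\liea$ nontrivially, so this subalgebra equals $\lieg'$ and $\langle N^+,N^-\rangle=G'$. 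Thus every $a$-invariant function in $L^2(G/\Gamma,m)$ is $G'$-invariant, i.e.\ $\calI_a\subset\calI_{G'}$; combined with the previous chain, $\calI_a=\calI_A=\calI_{G'}$.

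To conclude, I would apply the pointwise ergodic theorem to the $\Z$-action generated by $a$: the ergodic decomposition of $m$ under $a$ is the disintegration $m=\int\mu_\omega\,d\bar m(\omega)$ of $m$ over the factor map $G/\Gamma\to(G/\Gamma)/\calI_a$, with $\mu_\omega$ both $a$-invariant and $a$-ergodic for $\bar m$-a.e.\ $\omega$. Since $\calI_a=\calI_{G'}$ this is also the disintegration of $m$ over $(G/\Gamma)/\calI_{G'}$; disintegrating a $G'$-invariant measure over the $\sigma$-algebra of $G'$-invariant sets produces conditional measures that are a.e.\ $G'$-invariant (uniqueness of the disintegration applied to a countable dense subset of $G'$, together with continuity of the action) and $G'$-ergodic (a $G'$-invariant set of intermediate conditional mass would itself lie in $\calI_{G'}$). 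Hence $\mu_\omega$ is $G'$-invariant for $\bar m$-a.e.\ $\omega$. Finally, because $\calI_a=\calI_A$, the same family $\{\mu_\omega\}$ is the ergodic decomposition of $m$ under $A$, so almost every $A$-ergodic component of $m$ is $G'$-invariant.

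The individual steps are all soft; the point that needs care is reconciling the ergodic decompositions of the amenable groups $\langle a\rangle$ and $A$ with that of the non-amenable group $G'$ — realizing all three as the disintegration over the common invariant $\sigma$-algebra $\calI_a=\calI_A=\calI_{G'}$, and verifying that the $G'$-version genuinely has $G'$-invariant (and $G'$-ergodic) components. The Mautner step is routine once one checks that the horospherical unipotents of a regular element of $A$ generate $G'$, which is exactly where the absence of compact factors in $G'$ enters.
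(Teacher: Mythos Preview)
Your proof is correct and is essentially the same Mautner-type argument the paper sketches: the paper simply asserts that almost every $A$-ergodic component is invariant under each coarse root group $G^\xi$ (by choosing, for each $\xi$, an $s\in A$ contracting $G^\xi$ and applying the pointwise ergodic theorem) and that the $G^\xi$ generate $G'$. You reorganize this by fixing one regular $a\in A$, obtaining $N^+$- and $N^-$-invariance simultaneously, and then proving carefully that $\langle N^+,N^-\rangle=G'$ via the ideal argument; the added $\sigma$-algebra bookkeeping $\calI_a=\calI_A=\calI_{G'}$ is a clean way to reconcile the three ergodic decompositions, which the paper leaves implicit.
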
 
Indeed, almost every $A$-ergodic component contains   $G^\xi$ for every nonzero coarse root $\xi\in \hat \Sigma$ and the $G^\xi$ generate all of $G'$.  
Similarly, every $G'$-invariant measure on $G/\Gamma$ is an $A$-ergodic component of the Haar measure on $G/\Gamma$.  

\begin{proposition}\label{charts ok}
Let $\mu$ be an $A$-invariant probability  measure  on $M^\alpha$ projecting to a $G'$-invariant measure on $G/\Gamma$.  Then for any lattice  $\Z^d\subset A\simeq \R^d$ and any finite, symmetric, generating set $F= \{ s_j : 1\le j\le \ell\}$ for $\Z^d$ the standing hypothesis of \cite[Section 3.1]{AWB-GLY-P1} hold relative to the charts $\{\phi_p:p\in M^\alpha\}$ above. 

That is, there are 
measurable functions $r\colon D \to (0, 1]$ and $ C \colon D \to [1,\infty)$ and a constant $L$ 
with 
$$\text{$r(g) \le \rho(g)$, $-\log r(g) \in L^q(D)$, and  $\log C(g) \in L^q(D)$ for all $1\le q <\infty$}$$  
such that, writing   $$r(p) = r(g), \rho(p) = \rho(g), C(p) = C(g)$$ 
for $p=(g,x) \in M^\alpha = D\times M$, we have
\begin{hyp}
\item \label{H1} $\phi_p\colon \R^k(\rho(p) )\to M^\alpha$ is a $C^1$ diffeomorphism onto its image with $\phi_p(0) = p$;
\item    \label{H2} 
$ \|D\phi_p  \|\le L$ and  $ \|D\phi_p \inv \|\le L$;  
in particular 
$\phi_p\colon \R^k(\rho(p) )\to (U, d)$ is a bi-Lipschitz embedding with $L\inv \le \lip(\phi_p)\le L$.
\end{hyp}
Moreover,  for each $m\in F$, setting  $f(\cdot)= \td \alpha(m, \cdot)$ we have for $ p\in  M^\alpha $ that 
\begin{hyp}[resume]
\item  \label{H3} the map \begin{equation} \label{eq: f hat} \td f_p:= \phi_{f(p)}\inv  \circ f \circ \phi_p \end{equation}
is well defined on $\R^k(r(p))$ with range contained in $\R^k(\rho(f(p)))$;
\item  \label{H4} $\td  f_p\colon \R^k( r(p)) \to \R^k(\rho(f(p))) $ is uniformly $C^{1+\beta}$
with $$\|\td f_p\|_{1+\beta}\le  C(p)$$

\end{hyp}

\begin{hyp}[resume]
\item  \label{H5}\label{H6}  \label{hyp:int} for every $n\in \Z^d$, $\displaystyle \left(p\mapsto  \log^+\|D_p\td \alpha (n)\| \right) \in L^q(\mu)$ for any $1\le q<\infty$; in particular $\displaystyle \left(p\mapsto  \log^+\|D_p\td \alpha (n)\| \right) \in L^{d,1}(\mu).$
\end{hyp}
\end{proposition}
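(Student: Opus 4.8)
The plan is to verify each of the hypotheses \ref{H1}--\ref{hyp:int} in turn, reducing most claims to the integrability of the function $g \mapsto -\log \rho(g)$ on $D$ (already established to lie in every $L^q(D)$, since $r^\Gamma(g\Gamma)$ is comparable to the distance in $G/\Gamma$ to a base point, and by property (4) of the fundamental domain $D$) together with the uniform control on fiber-wise derivatives coming from the quasi-isometry between word and Riemannian metrics on $\Gamma$. First I would record that \ref{H1} and \ref{H2} are essentially built into the construction of the charts in Section \ref{sec:charts}: the charts $\phi_p$ are defined via the Riemannian exponential maps on the two transverse factors $\lieg$ and $T_xM$, both of which are uniformly bi-Lipschitz onto their images on balls of radius $\rho(g) \le \frac12\min\{r^\Gamma(g), r_0\}$, and the product metric on $M^\alpha$ restricted to such a chart is uniformly comparable to the Euclidean metric on $\R^k(\rho(p))$ by properties (2)--(3) of the family $\langle \cdot,\cdot\rangle_g$ on $V$. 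So one sets $r(p) := r(g)$ to be a suitable (to be determined) fraction of $\rho(g)$, and $\log C(g)$, $-\log r(g)$ will inherit $L^q$-integrability from $-\log\rho(g)$ once the definitions are pinned down.

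The substantive work is in \ref{H3} and \ref{H4}, which concern the map $\td f_p = \phi_{f(p)}^{-1}\circ f\circ\phi_p$ for $f = \td\alpha(m,\cdot)$, $m \in F\subset A$. Here I would write $p = (g,x)$, so $\td\alpha(m)(g,x)\Gamma = (mg,x)\Gamma$; to express this in the fundamental domain one writes $mg = g'\gamma$ with $g'\in D$ and $\gamma = \gamma(m,g)\in\Gamma$, so that in the $D\times M$ parameterization $f(p) = (g', \alpha(\gamma)(x))$ and the fiber-wise part of $\td f_p$ is governed by the diffeomorphism $\alpha(\gamma)$ of $M$. The key point is to bound the word-length of $\gamma = \gamma(m,g)$ in terms of $d_G(e,g)$ (and $\|m\|$, which is bounded since $F$ is finite): by the Lubotzky--Mozes--Raghunathan quasi-isometry \cite{MR1828742} cited in the text, the word metric on $\Gamma$ is comparable to the restriction of the Riemannian metric on $G$, so $|\gamma|_{\text{word}} \lesssim d_G(e,mg) \le d_G(e,g) + C$. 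Then, writing $\gamma$ as a product of at most $|\gamma|_{\text{word}}$ generators of $\Gamma$ (each acting on $M$ with $\|D(\cdot)\|$ and $\Hol(D(\cdot))$ bounded by a fixed constant $C_0$), Claim \ref{claim:2.5} gives $\|D\td{\alpha(\gamma)}_x\|_U \le C_0^{|\gamma|}$ and $\Hol_U(D\td{\alpha(\gamma)}_x) \le |\gamma| C_0^{|\gamma|(1+\beta)}$ on balls of radius $C_0^{-|\gamma|}r_0$. Combined with the $\Gamma$-equivariance of the metrics $\langle\cdot,\cdot\rangle_g$ (property (2)) and the uniform comparability of these metrics on $V$ (property (3)), this converts into the bound $\|\td f_p\|_{1+\beta}\le C(p)$ with $\log C(p)$ controlled by a linear function of $d_G(e,g) + (-\log\rho(g))$; choosing $r(p)$ to be $\rho(g)$ shrunk by a factor like $C_0^{-|\gamma(m,g)|}$ ensures the composed map is defined on $\R^k(r(p))$ with range in $\R^k(\rho(f(p)))$, giving \ref{H3}. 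Since both $d_G(e,g)$ (by property (4) of $D$) and $-\log\rho(g)$ lie in every $L^q(D)$, we get $\log C \in L^q(D)$ and $-\log r \in L^q(D)$ as required.

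Finally, \ref{hyp:int} follows by the same mechanism applied to an arbitrary $n\in\Z^d$ rather than a generator: $\|D_p\td\alpha(n)\|$ is controlled by $C_0^{|\gamma(n,g)|}$ times bounded geometric factors, $|\gamma(n,g)| \lesssim d_G(e,ng) \lesssim d_G(e,g) + \|n\|$, and integrating $\log^+$ against $\mu$ — which projects to Haar measure on $G/\Gamma$, hence pushes forward to a measure on $D$ absolutely continuous with the right density, and here the reduction via Claim \ref{claim:ergdecom} that $\mu$ projects to a $G'$-invariant measure is what lets us treat the non-compact-factor directions — reduces to the $L^q$-integrability of $d_G(e,\cdot)$ on $D$. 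The $L^{d,1}$ conclusion is immediate since $L^q \subset L^{d,1}$ on a probability space for $q > d$. The main obstacle I anticipate is the bookkeeping in \ref{H4}: one must carefully track how the domain radius $r(p)$ must shrink (exponentially in $|\gamma(m,g)|$) so that the chart composition is everywhere defined, while simultaneously checking that this shrinkage is still only polynomial-in-$d_G(e,g)$ on the log scale, so that $-\log r$ stays in $L^q$; this is exactly the delicate point, flagged in the text, that the fiberwise $C^1$ and $C^{1+\beta}$ norms are unbounded over the non-compact base, and it is handled precisely by the LMR quasi-isometry together with the $L^p$ control on $d_G(e,\cdot)|_D$.
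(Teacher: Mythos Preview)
Your overall strategy matches the paper's: reduce \ref{H1}--\ref{H2} to the construction of the charts, use the Lubotzky--Mozes--Raghunathan quasi-isometry together with Claim~\ref{claim:2.5} to control the fiberwise $C^{1+\beta}$ norms for \ref{H3}--\ref{H4}, and set $r(g) = \hat C^{-|\gamma(m,g)|_{\text{word}}}\rho(g)$. However, there is a genuine gap in your estimate for the word length of $\gamma = \gamma(m,g)$.

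You write $|\gamma|_{\text{word}} \lesssim d_G(e,mg) \le d_G(e,g) + C$. The first inequality is not correct in the nonuniform case. Writing $mg = g'\gamma$ with $g' \in D$, one has (using right-invariance of $d_G$)
\[
d_G(e,\gamma) \le d_G(e,mg) + d_G(e,g'),
\]
and the term $d_G(e,g')$ cannot be dropped: since $D$ is unbounded when $\Gamma$ is nonuniform, the returned point $g' = mg\gamma^{-1}$ may lie arbitrarily far out in the cusp. The paper's proof keeps this term and then observes that the return map $D\to D$, $g\mapsto mg(\gamma(m,g))^{-1}$, preserves the Haar measure on $D$; hence $g\mapsto d_G(e,g')$ has the same $L^q$ norm as $g\mapsto d_G(e,g)$, and integrability of $N(g) := \max_{m\in F}|\gamma(m,g)|_{\text{word}}$ follows. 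Without this measure-preservation argument your bound on $|\gamma|$, and hence your bounds on $\log C(g)$ and $-\log r(g)$, do not go through. The same defect appears in your treatment of \ref{hyp:int} for general $n\in\Z^d$; the paper instead deduces \ref{hyp:int} from the generator case via the cocycle property of $\log\|D\td\alpha\|$. Finally, note that $\mu$ is only assumed to project to a $G'$-invariant (not Haar) measure $\nu$ on $G/\Gamma$; the paper transfers $L^q(D,\text{Haar})$ integrability to $L^q(D,\nu)$ by observing that the compact factor $C$ acts transitively on $G'$-ergodic components with bounded displacement, a step you allude to but do not make explicit.
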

Here $L^{d,1}(\mu)$ is the Lorentz integrability space (see \cite{MR0033449}.)  We have $L^{p}(\mu)\subset L^{d,1}(\mu) $ for any $p> d$.   The assertion that $\displaystyle \left(p\mapsto  \log^+\|D_p\td \alpha (n)\| \right) \in L^{d,1}(\mu)$ guarantees the cocycle satisfies the hypotheses of the higher-rank multiplicative ergodic theorem.  
 As $-\log\rho, -\log r$ and $\log C$ are $L^d$ on the domain $D$, it follows that, in the terminology of \cite{AWB-GLY-P1}, they are slowly growing functions over the action of $\Z^d$.  
\begin{proof}
Fix a finite, symmetric     generating set $S= \{ \gamma_i: 1\le i\le \ell\}$ for $\Gamma$.  For each $1\le i\le \ell$ take $g_i = \alpha (\gamma_i) \colon M\to M$ and set $\hat C>1$ with 
\begin{enumerate}
	\item $\|D g_i \| \le \hat C$, and 
	\item $\hol (Dg_i) \le \hat C$.  
\end{enumerate}
Let $S = \{\gamma_i\}$ be a fixed finite generating set for the  lattice $\Gamma \subset G$.  
Let $d_{\text{word}}$ denote the corresponding word metric on $\Gamma$.  
Let $d_G$ denote the distance on $G$ induced by the right-invariant metric on $G$.  Note that $d_G$ restricts to a metric on $\Gamma \subset G$.  
It follows from \cite{MR1828742} that if $\Gamma$ is a higher-rank lattice as introduced in Section \ref{sec:1},   the metrics $d_{\text{word}}$ and $d_{G}$ are  {quasi-isometrically equivalent}: there are $A>1$ and $B>0$ such that for all $\gamma, \hat \gamma\in \Gamma$ we have 
$$ A\inv d_G(\gamma, \hat \gamma) - B \le d_{\text{word}}(\gamma, \hat \gamma) \le A d_G(\gamma, \hat \gamma) + B.$$

Now consider any lattice $\Z^d$ in $A\simeq \R^d$ and finite symmetric generating set $F$.    
Given $g\in D$ and $s_j\in F$ let  $\gamma_j(g)$ be such that $s_j g \in D\gamma_j(g)$.  Define $$N(g)= \max_{s_j\in F} \{ d_{\text {word}} (e, \gamma_j(g))\}.$$
We have  
\begin{align*}
d_{\text {word}} (e, \gamma_j(g)) 
	&\le A\left[d(e,g) + d(e, s_j)+ d(e, s_j g ( \gamma_j(g))\inv )\right] + B.
\end{align*}

Let $\nu$ denote the image of $\mu$ in $G/\Gamma$ and naturally consider $\nu$ as a measure on $D$.  Recall that $C$ denotes the maximal compact normal subgroup of $G$.  We have that $C$ and $G'$ commute whence $C$ acts transitively on the set of  $G'$-ergodic components of the Haar measure.  
As $g\mapsto d(e,g)$ is in $L^q(D, \text{Haar})$ for all $1\le q<\infty$ and as  $C$ has bounded diameter and acts transitively on $G'$-ergodic components,  we have that $g\mapsto d(e,g)$  is in $L^q(D, \nu)$ for all $1\le q<\infty$.  
Also, as the map $D\to D$ given by $g\mapsto s_j g ( \gamma_j(g))\inv$ preserves the Haar measure,  
it follows that  $g\mapsto N(g)$ is in $L^q(D,\nu)$ for all $1\le q<\infty$.

We set $r(g,x) = r(g):= \hat C^{-N(g)} \rho(g)$.  
We have  that $0<r(g,x)\le \rho(g)$ for every $(g,x)\in D\times M$.   Moreover,
we have  that $$\int (-\log (r(g,x)) )^q \ d  \mu(g,x)= \int _D( -\log (r(g)))^q \ d  \nu(g) < \infty.$$

Given $s_j\in F$, let $f= \td \alpha (s_j)$.   
Write $\td f_{(g,x)}\colon \R^k(r(g))\to \R^k(\rho(f(g))$ for 
$$\td f_{(g,x)} :=  \phi_{f(g,x)}\inv \circ f\circ  \phi_{(g,x)}.$$ \ref{H3} then follows. 
From Claim \ref{claim:2.5} we have 
	$$\|D\td f_{(g,x)}\| \le \hat C^{N(g)}, \quad 
	\Hol (D\td f_{(g,x)}) \le N(g) \hat C^{N(g)(1+\beta)}$$
whence  \ref{H4} follows.  
Moreover, we have that the function 
\begin{equation} \label{eq:cocycle} (g,x)\mapsto \log \| D_0 \td f_{(g,x)}\|\end{equation} is $L^q(\mu)$ for every $1\le q<\infty $.  From the cocycle property, \ref{H5} follows for all elements of the action.  
\end{proof}

\section{Lyapunov exponents, coarse foliations, and conditional entropy} 
For this section we consider the restriction of the action $\td \alpha$ on $M^\alpha$ to $A$.  Take  $\mu$ to be an $A$-invariant probability measure on $M^\alpha$.  Let $\nu= \pi_*(\mu)$ be the projection of $\mu$ to $G/\Gamma$.   In the case that $\Gamma$ is not cocompact, assume the projection $\nu$ is $G'$-invariant  so that the charts in Section \ref{sec:charts} satisfy properties \ref{H1}--\ref{H6} of Proposition \ref{charts ok}.   

\subsection{Lyapunov exponent functionals}
From the $L^{d,1}$ integrability of \ref{H6} of Proposition \ref{charts ok}
it follows that the restriction to  $A$ of the derivative cocycle $D\td \alpha$ on $(M^\alpha, \mu)$  
satisfies the hypotheses of the Oseledec's multiplicative ergodic theorem in every direction $s\in \R^d$ (see \eqref{eq:rays} below.)
Moreover, we have uniform convergence along spheres  guaranteed by the stronger conclusions of the higher-rank Oseledec's multiplicative ergodic theorem.

Equip $A\simeq \R^d$ with any norm $|\cdot|$.  

\begin{theorem}[{Higher-rank multiplicative ergodic theorem; \cite[Theorem 2.4]{AWB-GLY-P1}}]\label{thm:OMT}
Let $\mu$ be any $A$-invariant measure on $M^\alpha$ satisfying \ref{H6} of Proposition \ref{charts ok}.  
Then there exist \label{thm:higherrankMET}
	\begin{enumerate}
	\item a full measure, $A$-invariant subset $\Lambda_0\subset M^\alpha$;
	\item an $A$-invariant measurable function $r\colon \Lambda_0\to \N$;
	  \item an $A$-invariant   measurable family of  linear functionals $\lambda_i(p)\colon A\to \R$ for $1\le i\le r(p)$;
	\item and a family of mutually transverse, $\restrict{D\td\alpha}{A}$-invariant,  measurable subbundles $E_{\lambda_i}\subset TM^\alpha$ with $T_{p}M^\alpha = \bigoplus_{i=1}^{r(p)} E_{\lambda_i}(p)$ for $p\in \Lambda_0$ 
	\end{enumerate}
	such that 
	\begin{equation}\label{eq:MET}\lim_{s\to \infty} \frac {\log \| D_p\td \alpha(s) (v)\| - \lambda_i(p) (s)}{|s|}\end{equation}
	for all $ v\in  E_{\lambda_i}(p)\sm \{0\}$.
\end{theorem}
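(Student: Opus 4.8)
The plan is to obtain the statement as a direct application of the higher-rank multiplicative ergodic theorem of \cite[Theorem 2.4]{AWB-GLY-P1}. The only input that must be supplied is the integrability of the derivative cocycle: for a fixed generating lattice $\Z^d\subset A$ and each $n\in\Z^d$, one needs $p\mapsto\log^+\|D_p\td\alpha(n)\|$ to lie in the Lorentz space $L^{d,1}(\mu)$. This is exactly \ref{H6} of Proposition \ref{charts ok}, which we have already established from the construction of the charts in Section \ref{sec:charts} together with the quasi-isometry estimate of \cite{MR1828742} controlling the fiberwise distortion over the cusp. Thus the proof is: fix such a $\Z^d$, invoke Proposition \ref{charts ok} to get \ref{H6}, and quote \cite[Theorem 2.4]{AWB-GLY-P1}; transversality of the $E_{\lambda_i}$ and the measurability and $A$-invariance of all the data are part of that conclusion.

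For orientation I record how the cited theorem is proved. First I would fix one nonzero $n_0\in\Z^d$ and apply the classical Oseledets theorem to the measure-preserving map $\td\alpha(n_0)$ --- the $L^{d,1}$ hypothesis is far more than the $L^1$ bound needed for this --- obtaining an $\td\alpha(n_0)$-invariant measurable splitting of $TM^\alpha$ with real exponents. Next I would exploit commutativity of the $A$-action: the Oseledets filtrations of the various $\td\alpha(n)$ mutually refine one another, their common refinement is an $A$-invariant splitting $T_pM^\alpha=\bigoplus_i E_{\lambda_i}(p)$, and on each summand the growth rate $n\mapsto\lim_k\tfrac1k\log\|D\td\alpha(kn)|_{E_{\lambda_i}(p)}\|$ is additive in $n$, hence extends to a linear functional $\lambda_i(p)\colon A\to\R$. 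Finally, continuity of $s\mapsto D\td\alpha(s)$ and density in the unit sphere of the directions of nonzero elements of $\Z^d$ would upgrade convergence along lattice points to convergence along all rays $s\to\infty$ in $A\simeq\R^d$.

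The genuinely delicate point --- and the reason the sharp hypothesis is $L^{d,1}$ rather than merely $L^1$ --- is the \emph{uniform} convergence over the spheres $\{|s|=R\}$ as $R\to\infty$ asserted in \eqref{eq:MET}. I expect this to be the main obstacle in a from-scratch proof: it requires a Kingman-type subadditive argument for the $\Z^d$-action combined with a maximal inequality for $d$-parameter averages valid on the Lorentz space $L^{d,1}$, which is what lets one pass from pointwise convergence in each fixed direction to uniformity over the whole sphere of directions. In the present paper this step is entirely subsumed into \cite[Theorem 2.4]{AWB-GLY-P1}, so nothing beyond the verification of \ref{H6} is needed here.
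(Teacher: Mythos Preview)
Your proposal is correct and matches the paper exactly: the theorem is not proved in the paper at all but is quoted directly from \cite[Theorem~2.4]{AWB-GLY-P1}, with the only local input being the verification of the $L^{d,1}$ integrability hypothesis \ref{H6}, which is established in Proposition~\ref{charts ok}. Your additional sketch of how the cited theorem itself is proved (classical Oseledets plus commutativity to get linear functionals, then the $L^{d,1}$ maximal inequality for uniform convergence over spheres) is accurate and goes beyond what the paper does, but it is not needed here since the paper simply imports the result as a black box.
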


	From \eqref{eq:MET}, for almost every $p\in M^\alpha$ and  every $s\in A$ we have convergence along rays
	\begin{equation}\label{eq:rays} \lim_{k\to \infty} \frac 1 k \log \| D_p\td \alpha(k s) (v)\| = \lambda_i(p) (s)\end{equation}
	for all $ v\in  E_{\lambda_i}(p)\sm \{0\}$.  
	The linear functionals $\lambda_i(p)\colon A \to \R$ are the \emph{Lyapunov exponent functionals}.  The dimension of the corresponding   $E_{\lambda_i}(p)$ is the \emph{multiplicity of $\lambda_i(p)$}.   

Recall the two $D\td \alpha$-invariant subbundles $E^F$ and $E^G$ of $TM^\alpha$.  We may restrict the derivative cocycle $\{D\td\alpha(s): s\in A\}$ to either of the two $A$-invariant distributions $E^F$ or $E^G$. These restrictions satisfy the hypotheses of the higher-rank multiplicative ergodic theorem.  For the restricted cocycles, we obtain Lyapunov exponent functionals $\{\lambda^F_i(p)\}$ and $\{\lambda^G_j(p)\}$ and splittings $E^F (p) = \oplus E^F_{\lambda^F_i(p)}(p), 1\le i\le r^F(p)$ and $E^G (p) = \oplus E^G_{\lambda^G_j(p)}(p), $ for $1\le j\le r^G(p)$  defined on a full measure $A$-invariant subsets.  
By a direct computation, we have that  the linear functionals $\{\lambda^G_j(p)\}$ coincide  with $\Sigma$, the restricted roots of $\lieg$ with respect to $\liea$. 
  In particular, the number $r^G(p)$, the functions $\{\lambda^G_j(p)\}$, and the subspaces $E^G _{\lambda^G_j(p)}(p)$ are defined at every point $p\in M^\alpha$ and are independent of $p$.    

Below, we write $\calL(p)$, $\calL^F(p)$ and $\calL^G(p) = \Sigma$, respectively, for the corresponding collections of Lyapunov exponent functionals at the point $p$ for the derivative cocycle and its restrictions to $E^F$ and $E^g$.   If $\mu$ is $A$-ergodic we write  $\calL(\mu)$, $\calL^F(\mu)$ and $\calL^G(\mu) $ or simply $\calL$, $\calL^F $ and $\calL^G  $ if the measure is understood.  

\subsection{Coarse Lyapunov exponents and coarse Lyapunov manifolds}
For this section assume that  $\mu$ is $A$-ergodic and that the charts in Section \ref{sec:charts} satisfy properties \ref{H1}--\ref{H6} of Proposition \ref{charts ok}.  Note that Lyapunov exponents and dimension of the corresponding subspaces are independent of the point \as

As with restricted roots, we group Lyapunov exponent functionals into \emph{coarse equivalence classes} by declaring that two exponents are equivalent if they are positively proportional.  We write  $\hat \calL$ for the   equivalence classes of coarse exponents.  For $\chi\in \hat \calL$ we write $E_\chi(p) = \bigoplus_{\lambda \in \chi} E_\lambda(p)$.  

Recall that we equipped $TM^\alpha$ with a Riemannian metric which, in turn, induces a distance $d$ on $M^\alpha$.  
Given $s\in A$ and $p\in M^\alpha$  we write $$W^u_s(p): = \left\{y\in M^\alpha : \limsup_{n\to -\infty } \frac{1}{n} \log d\left(\td \alpha (ns) (p), \td \alpha (ns) (y)\right) <0\right\} $$ for the unstable  manifold through $p$ for the action of $s\in A$ on $M^\alpha$.  
For $\mu$-almost every $p\in M^\alpha$, we have that $W^u_s(p)$ is a connected,  injectively immersed, $C^{1+\beta}$ manifold with  $T_p W^u_s(p)= \bigoplus_{\lambda \in \calL: \lambda(s)>0} E_\lambda(p)$.  
Observe that given $s\in A$, the global unstable manifolds $\{W^u_s(p):p\in M^\alpha\}$ form  a (generally non-measurable) partition of $(M^\alpha, \mu)$.  

Let $\Z^d$ be any lattice in $A\simeq \R^d$. Given a coarse Lyapunov exponent $\chi\in \hat \calL$ we write 
$W^\chi(p)$ for the path connected (relative to the immersed topologies) component of $$\bigcap _{\{s\in \Z^d:\chi(s)>0\} }W^u_s(p)$$ containing $p$.  
$W^\chi(p)$ is   the \emph{coarse Lyapunov manifold} corresponding to $\chi$ through $p$.  
For \ae $p$, $W^\chi(p)$ is a $C^{1+\beta}$ injectively immersed manifold with  $T_pW^\chi(p) = E_\chi(p)$ (see  \cite{AWB-GLY-P1}).  
We let  $\scrW^\chi$ denote the partition of $(M^\alpha, \mu)$ into coarse Lyapunov manifolds $W^\chi(p)$.  
 In the terminology of \cite{AWB-GLY-P1}, $\scrW^\chi$ is a $C^{1+\beta}$-tame, $\restrict{\td \alpha}{A}$-invariant, measurable foliation.  
Note that the partition $\scrW^\chi$ is defined independently of the choice of lattice $\Z^d\subset A$ in that for any two choices of lattice, the corresponding partitions coincide modulo $ \mu$.  

Similarly,  in the terminology of \cite{AWB-GLY-P1}, the partition $\Gol$ of $M^\alpha$ into $G$-orbits and the partition $\Fol$ of $M^\alpha$ into fibers form  $C^{1+\beta}$-tame, $\td \alpha$-invariant, measurable foliations. 
We similarly define $W^{\chi^F}(p)$ and $W^\xi(p)$ for the coarse Lyapunov manifolds   associated to coarse 
fiberwise Lyapunov exponents $\chi^F\in \hat \calL^F$ and coarse roots $\xi\in \hat \Sigma$.  Note that if $\xi\in \hat \Sigma$ then $W^\xi(p)$ is simply the orbit $ \td\alpha (G^\xi)( p)$ of $p$ by the unipotent subgroup $G^\xi = \exp \lieg^\xi$ of $G$.  We similarly define measurable foliations $\scrW^{\chi^F}$ and $\scrW^\xi$ given by the partitions into fiberwise coarse manifolds and orbits of coarse root groups. 

\subsection{Conditional entropy, entropy product structure, and  coarse-Lyapunov Abramov--Rohlin formula}
Recall  for $s\in A$  the $\mu$-metric entropy of $\td \alpha(s)$ is  $$ h_\mu(\td  \alpha(s) ):= \sup\{ h_\mu( \td \alpha(s), \eta ) \}$$
where the supremum is over all measurable partitions $\eta$ of $(M, \mu)$ and $h_\mu( \td \alpha(s), \eta ) $ is given by the mean conditional entropy $$h_\mu( \td \alpha(s), \eta ) = H_\mu(\eta^+ \mid \td \alpha(s) \eta^+)$$
where $\eta^+= \bigvee_{i=0}^\infty   \td \alpha(s^i)  \eta$.

Given the partition $\scrW^\chi$ into   coarse Lyapunov manifolds for $\chi\in \calL$,  for $s\in A$ with $\chi(s)>0$ we   define  the \emph{conditional metric entropy} of $\td \alpha(s)$ \emph{relative to $\scrW^\chi$} as follows:  A measurable partition $\xi$ of $(M^\alpha, \mu)$ is said to be \emph{subordinate to $\scrW^\chi$} if, for \ae $p$, 
\begin{enumerate}
\item the atom  $\xi(p)$ is contained in $ \scrW^\chi(p)$,  
\item the atom  $\xi(p)$ contains a neighborhood of $p$ in $\scrW^\chi(p)$, and 
\item the atom  $\xi(p)$ is precompact in $ \scrW^\chi(p)$.  
\end{enumerate}
The  {conditional metric entropy} of $\td \alpha(s)$  {relative to $\scrW^\chi$}
is $$ h_\mu(\td \alpha(s) \mid \scrW^\chi):= \sup\{ h_\mu(\td \alpha(s), \eta\vee \xi) \}$$
where the supremum is over all partitions $\xi$ subordinate to $\scrW^\chi$ and all measurable partitions $\eta$. 

From  \cite{AWB-GLY-P3} we have the following result which states that entropy behaves like a product along coarse Lyapunov manifolds.  
\begin{proposition}[{\cite[Corollary  13.2]{AWB-GLY-P3}}]\label{en:PS}
For $s\in A$ 
$$h_\mu( \td \alpha(s) )= \sum_{\chi(s)>0} h_\mu(\td \alpha(s) \mid \scrW^\chi).$$
\end{proposition}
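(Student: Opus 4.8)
The plan is to reduce the identity to the Ledrappier--Young entropy formula and then to a combinatorial decomposition of the unstable foliation of $\td\alpha(s)$ into its coarse Lyapunov pieces. Since $\mu$-entropy and conditional $\mu$-entropy are affine in $\mu$ and the coarse Lyapunov foliations $\scrW^\chi$ are measurable, I would first pass to the $A$-ergodic components of $\mu$ and assume $\mu$ is $A$-ergodic. Fix $s\in A$ and let $\scrW^u_s$ denote the partition of $(M^\alpha,\mu)$ into unstable manifolds $W^u_s(p)$, a $C^{1+\beta}$-tame, $\td\alpha(s)$-invariant measurable foliation with $T_pW^u_s(p)=\bigoplus_{\chi(s)>0}E_\chi(p)$. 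The first step is to invoke the Ledrappier--Young entropy formula for $C^{1+\beta}$ diffeomorphisms in the form $h_\mu(\td\alpha(s))=h_\mu(\td\alpha(s)\mid\scrW^u_s)$. Here one needs the version that survives the noncompactness of $M^\alpha$: this is precisely where the tempered and slow-growth estimates \ref{H4}--\ref{H6} of Proposition~\ref{charts ok} and the framework of \cite{AWB-GLY-P1} enter, so that the usual arguments (the Margulis--Ruelle inequality together with the Ledrappier--Strelcyn construction of measurable partitions subordinate to unstable manifolds) can be run inside the dynamical charts of Section~\ref{sec:charts}. It then remains to prove $h_\mu(\td\alpha(s)\mid\scrW^u_s)=\sum_{\chi(s)>0}h_\mu(\td\alpha(s)\mid\scrW^\chi)$.

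For the inequality $h_\mu(\td\alpha(s)\mid\scrW^u_s)\le\sum_{\chi(s)>0}h_\mu(\td\alpha(s)\mid\scrW^\chi)$, enumerate the coarse exponents with $\chi(s)>0$ as $\chi_1,\dots,\chi_k$, ordered so that $i\mapsto\chi_i(s)$ is non-increasing, and put $\scrW^{(j)}:=\scrW^{\chi_1}\vee\cdots\vee\scrW^{\chi_j}$. For this ordering the partial sums $\bigoplus_{i\le j}E_{\chi_i}$ are strong-unstable-type distributions for $\td\alpha(s)$, so the $\scrW^{(j)}$ form an increasing flag of $C^{1+\beta}$-tame, $\td\alpha(s)$-invariant measurable foliations, with $\scrW^{(0)}$ trivial and $\scrW^{(k)}=\scrW^u_s$; moreover $W^{\chi_j}(p)\subset W^{(j)}(p)$ and is transverse there to the leaves of $\scrW^{(j-1)}$. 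Taking a partition $\zeta$ subordinate to $\scrW^u_s$ whose trace on each $\scrW^{(j)}$-leaf is subordinate to $\scrW^{(j)}$, the chain rule for conditional Shannon entropy applied along the flag $\scrW^{(0)}\le\cdots\le\scrW^{(k)}$ bounds each successive term by the conditional entropy of $\td\alpha(s)$ relative to $\scrW^{\chi_j}$, which gives the claimed bound after passing to the supremum over $\zeta$.

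The reverse inequality is the substance of the statement, and its main obstacle is a product structure for the conditional measures of $\mu$: along each tower $\scrW^{(j)}\to\scrW^{(j-1)}$ one must show that the conditional measure of $\mu$ on a leaf of $\scrW^{(j)}$ disintegrates, through the transverse pair $\scrW^{(j-1)}$ and $\scrW^{\chi_j}$, as a measure that is, relatively over the $\scrW^{(j-1)}$-quotient, a product of the conditional measures of $\mu$ along $\scrW^{(j-1)}$ and along $\scrW^{\chi_j}$. This is where the higher-rank hypothesis is decisive: because every $\scrW^{\chi_i}$ is invariant under the whole abelian group $A$ and the coarse exponents are pairwise non-proportional, one can choose elements of $A$ on the wall $\ker\chi_j$ that are regular for the $\chi_i$ with $i<j$, which decouples the $j$-th coarse direction from $\scrW^{(j-1)}$, and then invariance of $\mu$ and of all the foliations propagates the product structure --- this is the higher-rank \emph{product of conditionals} mechanism from measure rigidity, and making it work on the noncompact $M^\alpha$, again via the charts and slow-growth bounds of Proposition~\ref{charts ok}, is the genuinely technical part. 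Once the product structure is in hand, the Abramov--Rohlin formula for the relative system $\scrW^{(j)}\to\scrW^{(j-1)}$ identifies the relative-entropy term with $h_\mu(\td\alpha(s)\mid\scrW^{\chi_j})$, yielding $h_\mu(\td\alpha(s)\mid\scrW^{(j)})=h_\mu(\td\alpha(s)\mid\scrW^{(j-1)})+h_\mu(\td\alpha(s)\mid\scrW^{\chi_j})$; induction on $j$ starting from the trivial $\scrW^{(0)}$ then gives $h_\mu(\td\alpha(s)\mid\scrW^u_s)=\sum_{j=1}^k h_\mu(\td\alpha(s)\mid\scrW^{\chi_j})$ and completes the proof. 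This is the content of \cite[Corollary~13.2]{AWB-GLY-P3}, whose argument I would follow.
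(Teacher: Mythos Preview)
The paper does not prove this proposition; it is quoted as a black box from \cite[Corollary 13.2]{AWB-GLY-P3}, so there is no in-paper argument to compare against. Your sketch is a faithful outline of the standard proof: reduce to unstable entropy via Ledrappier--Young (in the noncompact form guaranteed by the chart estimates of Proposition~\ref{charts ok}), build a flag of intermediate foliations interpolating between the point partition and $\scrW^u_s$, and use the higher-rank product structure of conditional measures---obtained from elements of $A$ lying on the walls $\ker\chi_j$---together with an Abramov--Rohlin step to get the reverse inequality.

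One technical point deserves correction. Your assertion that, after ordering the $\chi_i$ by the values $\chi_i(s)$, the partial sums $\bigoplus_{i\le j}E_{\chi_i}$ are ``strong-unstable-type distributions for $\td\alpha(s)$'' is not right in general. A coarse class $\chi$ can contain several Lyapunov functionals (all positive multiples of one another), and for a fixed $s$ their values can interleave with those from another coarse class $\chi'$; hence the coarse-class flag need not coincide with the Ledrappier--Young fast-unstable flag of the single map $\td\alpha(s)$. The correct reason the intermediate distributions integrate to tame measurable foliations is the full $A$-action: with an appropriate ordering of the $\chi_i$ (obtained, say, by rotating a hyperplane through the finite set of coarse-exponent rays), each $\bigoplus_{i\le j}E_{\chi_i}$ is realized as an intersection of unstable distributions for various elements of $A$, and is therefore integrable by the results of \cite{AWB-GLY-P1}. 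With this adjustment, the chain-rule upper bound and the product-of-conditionals lower bound go through exactly as you describe.
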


Given a coarse exponent  $\chi\in \hat \calL$ we write  $\chi(F)\in \hat\calL^F$ for  the unique fiberwise coarse exponent with $$\chi(F) = c \chi$$ for some $c>0$ if such $c$ exists and $0$ otherwise.  Similarly, define $\chi(G)$ to be the unique coarse restricted root $\hat \xi\in \hat \Sigma$ that is positively proportional to $\chi$ and $0$ otherwise.  Note that given a non-zero coarse Lyapunov exponent  $\chi\in \hat \calL$, at least one of $\chi(F)$ or $\chi(G)$ is non-zero.   

Let $\nu$ denote the image of $\mu$ under $\pi\colon M^\alpha \to G/\Gamma$.  
From the Abramov-Rohlin formula (c.f. \ \cite{MR0476995,MR1179170}), we may decompose entropy of $\mu$ into the sum of the  entropy along  fibers and the entropy of the factor: for any $s\in A$
\begin{equation}\label{eq:AR}
h_\mu(\td \alpha(s)) = h_{\nu} (s) + h_\mu(\td \alpha(s) \mid \Fol).
\end{equation}
Here $\Fol$ is the partition into preimages of the projection $\pi$ and $h_{\nu} (s) $ is the metric entropy of the translation by $s$ on $(G/\Gamma,\nu)$.  
From  \cite{AWB-GLY-P3}, we have a similar decomposition   into fiber and factor entropy along coarse manifolds. 
\begin{proposition}[{\cite[Theorem 13.4]{AWB-GLY-P3}}]\label{prop:AR}
Let $s\in A$ be such that $\chi(s)>0$.  Then \begin{equation}\label{eq:ineq} h_\mu(\td \alpha(s)\mid \scrW^{\chi}) = h_{\nu}( s\mid \chi(G))+ h_\mu (\td \alpha(s)\mid \scrW^{\chi(F)}).\end{equation}
\end{proposition}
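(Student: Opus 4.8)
The plan is to combine the Abramov--Rohlin formula \eqref{eq:AR} with the product structure of entropy along coarse Lyapunov manifolds (Proposition \ref{en:PS}), applied simultaneously to the total action $\td\alpha$ on $(M^\alpha,\mu)$, to its restriction along the fibers $\Fol$, and to the homogeneous factor action on $(G/\Gamma,\nu)$. First I would observe that for the homogeneous translation on $(G/\Gamma,\nu)$ the coarse Lyapunov manifolds are precisely the orbits of the coarse root groups $G^\xi$, so the analogue of Proposition \ref{en:PS} reads $h_\nu(s)=\sum_{\xi\in\hat\Sigma,\ \xi(s)>0} h_\nu(s\mid \xi)$; likewise for the fiberwise cocycle one has $h_\mu(\td\alpha(s)\mid\Fol)=\sum_{\chi^F\in\hat\calL^F,\ \chi^F(s)>0} h_\mu(\td\alpha(s)\mid\scrW^{\chi^F})$, since the fiberwise coarse Lyapunov foliations $\scrW^{\chi^F}$ refine $\Fol$ and carry all of the fiber entropy. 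Substituting both of these into \eqref{eq:AR} and matching against $h_\mu(\td\alpha(s))=\sum_{\chi,\ \chi(s)>0} h_\mu(\td\alpha(s)\mid\scrW^\chi)$ from Proposition \ref{en:PS}, the claim \eqref{eq:ineq} is the term-by-term identity obtained by grouping the indices: each coarse exponent $\chi\in\hat\calL$ of the total action determines $\chi(G)\in\hat\Sigma\cup\{0\}$ and $\chi(F)\in\hat\calL^F\cup\{0\}$, and conversely the pair $(\chi(G),\chi(F))$ recovers $\chi$ (at least one of the two is nonzero and, being positively proportional to $\chi$, pins down its coarse class), so summing \eqref{eq:ineq} over all $\chi$ with $\chi(s)>0$ must reproduce the decomposition of the total entropy.

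The key steps, in order, are: (i) establish the coarse-Lyapunov entropy product formula for the homogeneous factor $(G/\Gamma,\nu)$ and identify its coarse Lyapunov foliations with the partitions $\scrW^\xi$ into $G^\xi$-orbits — this is the homogeneous-dynamics analogue of Proposition \ref{en:PS} and should follow from the same results of \cite{AWB-GLY-P3} (or from Ledrappier--Young theory on $G/\Gamma$) since the derivative cocycle of the $A$-action on $G/\Gamma$ is exactly the adjoint action, with Lyapunov functionals equal to the restricted roots $\Sigma$; (ii) establish the analogous product formula along the fibers, i.e.\ that $h_\mu(\td\alpha(s)\mid\Fol)$ splits as a sum over fiberwise coarse exponents $\chi^F$ with $\chi^F(s)>0$ of the conditional entropies $h_\mu(\td\alpha(s)\mid\scrW^{\chi^F})$ — again from \cite[Corollary 13.2]{AWB-GLY-P3} applied to the restricted cocycle $\restrict{D\td\alpha}{E^F}$, using that the $\scrW^{\chi^F}$ subordinate partitions suffice to compute fiber entropy; (iii) verify the bijective correspondence between $\{\chi\in\hat\calL:\chi(s)>0\}$ and the pairs $(\chi(G),\chi(F))$ appearing on the right-hand sides of (i) and (ii), together with the matching $\chi(s)>0 \iff \chi(G)(s)>0$ or $\chi(F)(s)>0$; (iv) conclude by comparing the two resulting expansions of $h_\mu(\td\alpha(s))$ and extracting the per-$\chi$ identity. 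I would be careful that the correspondence in (iii) is genuinely a bijection: a priori two distinct coarse exponents $\chi_1\neq\chi_2$ of the total action could have $\chi_1(G)=\chi_2(G)$ and $\chi_1(F)=\chi_2(F)$ both zero, but this is impossible since a nonzero coarse exponent must have at least one of $\chi(G),\chi(F)$ nonzero, and that nonzero datum is positively proportional to $\chi$, hence determines it.

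The main obstacle I expect is step (iv): extracting a term-by-term identity from an equality of two sums is not automatic, so one needs either (a) to carry out the whole argument after first conditioning on a single coarse class — i.e.\ run the Abramov--Rohlin decomposition \emph{relative to} the foliation $\scrW^\chi$ rather than globally — so that \eqref{eq:ineq} appears directly as the $\scrW^\chi$-conditional version of \eqref{eq:AR}, or (b) to invoke the more refined statement of \cite[Theorem 13.4]{AWB-GLY-P3}, which presumably is already phrased coarse-class by coarse-class (the proposition is quoted as a theorem from that reference). Approach (a) is cleaner conceptually: one shows that $\scrW^\chi$ is simultaneously refined by $\scrW^{\chi(F)}$ within the fibers and projects to $\scrW^{\chi(G)}$ on $G/\Gamma$, so the Abramov--Rohlin formula applied to the subordinate-to-$\scrW^\chi$ dynamics gives exactly $h_\mu(\td\alpha(s)\mid\scrW^\chi)=h_\nu(s\mid\chi(G))+h_\mu(\td\alpha(s)\mid\scrW^{\chi(F)})$. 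The remaining subtlety, present throughout, is the non-compactness of $M^\alpha$ when $\Gamma$ is nonuniform; but this has already been addressed by Proposition \ref{charts ok}, which guarantees the integrability hypotheses (in particular \ref{H6}) under which the results of \cite{AWB-GLY-P3} were proved, so no new work on that front should be needed here.
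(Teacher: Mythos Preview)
Your framework matches the paper's: use the product formulas (Proposition \ref{en:PS}) for the total, base, and fiber entropies together with the global Abramov--Rohlin formula \eqref{eq:AR}, and then try to deduce the per-$\chi$ identity \eqref{eq:ineq}. You correctly flag step (iv) as the genuine obstacle. However, you do not supply the missing idea the paper uses to overcome it, and your proposed approaches (a) and (b) do not fill the gap.

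The paper's key step is to first prove, for each fixed $\chi$, the \emph{inequality}
\[
h_\mu(\td\alpha(s)\mid\scrW^\chi) \le h_\nu(s\mid\chi(G)) + h_\mu(\td\alpha(s)\mid\scrW^{\chi(F)})
\]
by a direct partition computation: lift an increasing, generating partition $\bar\eta$ subordinate to $G^{\chi(G)}$-orbits on $G/\Gamma$ to $\eta=\pi^{-1}(\bar\eta)$, choose an increasing, generating $\zeta$ subordinate to $\scrW^\chi$, and use the subadditivity $h_\mu(\td\alpha(s),\eta\vee\zeta)\le h_\mu(\td\alpha(s),\eta)+h_\mu(\td\alpha(s),\zeta\vee\bigvee_n\td\alpha(s^n)\eta)$; the second term is $h_\mu(\td\alpha(s),\zeta\vee\Fol)=h_\mu(\td\alpha(s)\mid\scrW^{\chi(F)})$ since $\bigvee_n\td\alpha(s^n)\eta=\Fol$. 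Now sum these inequalities over all $\chi$ with $\chi(s)>0$: by your steps (i)--(iii) and \eqref{eq:AR} the two sides both equal $h_\mu(\td\alpha(s))$, and since every term is nonnegative each inequality is forced to be an equality. This inequality-then-squeeze argument is exactly what is absent from your proposal. Your approach (a), ``run Abramov--Rohlin relative to $\scrW^\chi$'', is not a theorem one can invoke; making it precise is essentially the partition computation above, and it only yields $\le$ directly---the reverse inequality genuinely needs the global sum to close.
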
 
Above, $h_{\nu}( s\mid \chi(G))$ denotes the metric entropy of translation by $s$ on $(G/\Gamma,\nu)$ conditioned on the partition of $(G/\Gamma,\nu)$ into orbits of $G^{\chi(G)}$.  
Note that for our applications below,  if $\chi(F)=0$ then $h_\mu (\td \alpha(s)\mid \scrW^{\chi(F)})=0.$

 Proposition \ref{prop:AR} is a special case of \cite[Theorem 13.4]{AWB-GLY-P3} which establishes an Abramov-Rohlin formula for entropy subordinated to  coarse Lyapunov manifolds for  two smooth $\Z^d$-actions, one of which is a measurable factor of the other.  In the current setting, our  factor map $\pi\colon M^\alpha \to G/\Gamma$ is smooth and we obtain  Proposition \ref{prop:AR} directly from Proposition \ref{en:PS}.  We include a proof of  Proposition \ref{prop:AR} in our current setting.  
 \begin{proof}[Proof of  Proposition \ref{prop:AR}]  
Note that, as the map $\pi\colon M^\alpha\to G/\Gamma$ is smooth, every coarse restricted root $\hat \xi\in \hat \Sigma$ for the action of $A$ on $G/\Gamma$  coincides with some coarse Lyapunov exponent  $\chi\in \hat \calL$ for the action of $A$ on $(M^\alpha, \mu)$; in particular, every  $\hat \xi\in \hat \Sigma$ is of the form $\hat \xi =  \chi(G)$ for some $ \chi\in \hat \calL$.  

Given $\chi\in \hat \calL$,   set $\bar \chi = \chi(G)$ and take $s\in A$ with $\chi(s)>0$.  If $\bar \chi = 0$ take  $\bar \eta $  to be the point partition on $G/\Gamma$.  Otherwise, take $\bar \chi$ to be a measurable partition of $G/\Gamma$ such that
\begin{enumerate}
	\item $s\inv \cdot \bar \eta \ge \bar \eta$;
	\item the atom $\bar \eta(x)$ of $\bar \eta$ containing $x$ is contained in the $G^{\bar \chi}$-orbit of $x$ and  contains an open neighborhood of $x$ in the $G^{\bar \chi}$-orbit;
	\item $\bigvee_{n\in \N} s^{-n} \cdot \bar \eta$ is the point partition.  
\end{enumerate}
Let $\eta = \pi\inv (\bar \eta)$.   Take  $\zeta$ to be a measurable partition of $M^\alpha$ such that 
\begin{enumerate}
	\item  $\td \alpha(s\inv)(\zeta)\ge \zeta$;  
	\item the atom $ \zeta(x)$ of $  \zeta$ containing $x$ is contained in   $\scrW^\chi(x)$ and  contains an open neighborhood of $x$ in  $\scrW^\chi(x)$ for almost every $x$;
	\item $\bigvee_{n\in \N} \td \alpha (s^{-n}) ( \zeta)$ is the point partition. 
\end{enumerate}

The partitions $\bar \eta$ and $\zeta$ satisfy
$$ \text{$ h_\nu(s, \bar \eta) = h_\nu(s\mid \bar \chi)$,    
and $h_\mu(\td \alpha(s),  \zeta\vee \Fol)= h_\mu(\td \alpha(s)\mid \scrW^{\chi(F)}).$}$$
We have the following  standard computation (c.f.\ \cite[Lemma 6.1]{MR2729332}):
\begin{align*}h_\mu(\td \alpha(s) \mid \scrW^\chi) &:= 
h_\mu(\td \alpha(s), \eta \vee \zeta) \\& \le  h_\mu(\td \alpha(s),   \eta) + h_\mu\left (\td \alpha(s),  \zeta\vee \bigvee_{n\in \Z}\alpha(s^n)(\eta)\right) \\
&= h_\nu(s, \bar \eta) + h_\mu(\td \alpha(s),  \zeta\vee \Fol) \\
& = h_\nu(s\mid \bar \chi) + h_\mu(\td \alpha(s)\mid \scrW^{\chi(F)}).\end{align*}
 
Now, fix $\chi_0\in \hat \calL.$  Given any $s\in A$ with $\chi_0(s)>0$ we have from   \eqref{eq:AR} and the analogue of  Proposition \ref{en:PS} applied to the total, fiber, and base entropies (see full formulation in  \cite[Theorem 13.4]{AWB-GLY-P3})  that 
\begin{align*}h_\mu(\td \alpha(s) ) &=
\sum_{\chi(s)>0}
h_\mu(\td \alpha(s) \mid \scrW^\chi)  \\
&\le \sum_{\chi(s)>0} h_\nu(s\mid  \chi(G)) +  \sum_{\chi(s)>0}  h_\mu(\td \alpha(s)\mid \scrW^{\chi(F)})\\
& =  h_\nu(s) + h_\mu(\td \alpha(s)\mid \Fol)\\
&= h_\mu(\td \alpha(s)).
\end{align*}
Since entropies are non-negative quantities, it follows that  $$ h_\mu(\td \alpha(s) \mid \scrW^\chi) =h_\nu(s\mid  \chi(G)) + h_\mu(\td \alpha(s)\mid \scrW^{\chi(F)}) $$
 for all $\chi\in \hat \calL$ with $\chi(s)>0$.  
 \end{proof}

 \section{Conditional measures and criteria for invariance}
 Let $G$ be as in the introduction.  That is $G= G_1\times \dots\times G_\ell$ is the direct product of almost-simple Lie groups.   Let $G'\subset G$  be the product of all non-compact almost-simple factors and $C\subset G$ the product of all compact almost-simple factors.    
  Consider $X$ any locally compact, second countable metric space and suppose that $X$ admits a continuous left $G$-action $x\mapsto g\cdot x$.  
We moreover assume the action is \emph{locally free}; that is, for every $x\in X$ there is a neighborhood $e\in U_x \subset G$ such that the map $U_x\to X, g\mapsto g\cdot x$ is injective.  It follows that for every $x$ we have a canonical  identification of $G$ with a covering space of the orbit $G\cdot x$ given by $g\mapsto g\cdot x$.

\subsection{Conditional measures along orbits of subgroups}
Consider $\mu$  any Borel probability measure  on $X$.  Let $V\subset G$ be a connected Lie subgroup and let $\eta$ be a measurable partition of $(X, \mu)$ such that for almost every $x\in X$, the atom $\eta(x)$ is contained in the $V$-orbit   $V\cdot x$ and contains an open neighborhood of $x$ in the   $V$-orbit $V\cdot x $.  Such a partition is said to be \emph{subordinate to $V$-orbits}.  
As above, we identify   $V\subset G$ with a cover of the $V$-orbit through $x$.  
Taking a family of conditional probability measures for the partition $\eta$ of $(X, \mu)$,  for $\mu$-almost every $x\in X$   we obtain a  probability measure $\mu^\eta_x$ on $G$ whose support contains the identity and is contained in $V$.  
Fix a sequence of measurable partitions $\eta_j$ subordinated to $V$-orbits such that for any compact set $K\subset V$, for almost every $x$ there is a $j$ with   $K\cdot x\subset \eta_j(x)$.  By fixing a choice of  normalization on $V$,  a  standard construction gives  for  almost every $x\in X$   a locally finite  measure $\mu^V_x$, supported on $V$, which is canonical up to the choice of normalization.  To emphasize the lack of uniqueness, we  write $[\mu^V_x]$ for the equivalence class of the measure $\mu^V_x$  up to normalization of the measure.   

The family of measures $\mu^V_x$ have the following characterization: for any partition $\eta$ subordinated to $V$-orbits, there is a function $c^\eta\colon X\to (0,\infty)$  so that if $\{\mu^\eta_x: x\in X\}$ is a family of conditional measures on $(X,\mu)$ associated with the measurable partition $\eta$,  then $$\mu^\eta_x = c^\eta(x)\restrict{ \left(v\mapsto v\cdot x  \right)_*(\mu^V_x)}{\eta(x)}.$$

Note that the subgroups $V$ above need not be unimodular.  
We have the following claim which follows from local disintegration and the definition of the left Haar measure.  
\begin{claim}\label{claim:kklloopp}
Let $V\subset G$ be a connected Lie subgroup.  
Then the measure  $[\mu^V_x]$ coincides with the left Haar measure on $V$ for $\mu$-almost every $x\in M$ if and only if the measure $\mu$ is invariant under the action of $V$.  
\end{claim}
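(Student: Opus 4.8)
The plan is to reduce the claim to a local computation in flow-box charts for the foliation of $X$ by $V$-orbits: ``local disintegration'' will mean disintegrating $\mu$ against the transverse directions of these charts, and ``the definition of left Haar measure'' will enter through its left-invariance. First recall that, by local freeness of the $G$-action, $v\mapsto v\cdot x$ identifies $V$ with a neighbourhood of $x$ in its orbit, and under this identification the restriction of the $V$-action to the orbit becomes the left-regular action $v\mapsto wv$ of $V$ on itself, since $w\cdot(v\cdot x)=(wv)\cdot x$. Hence left translations of $V$ correspond to the ambient $V$-action along orbits; a locally finite measure on $V$ is left Haar exactly when it is invariant under all left translations; and the (possible) non-unimodularity of $V$ plays no role. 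Recall also the characterization $\mu^\eta_x=c^\eta(x)\,\restrict{(v\mapsto v\cdot x)_*(\mu^V_x)}{\eta(x)}$, valid for any partition $\eta$ subordinate to $V$-orbits.

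Next I would fix flow boxes. For $\mu$-a.e.\ $x$ local freeness gives a neighbourhood of the form $U\cdot B$, with $U\subset V$ a small neighbourhood of $e$ and $B$ a transversal through $x$, such that $(v,b)\mapsto v\cdot b$ is a homeomorphism $U\times B\to U\cdot B$ carrying the orbit foliation to the partition into plaques $U\times\{b\}$ and the $V$-action along orbits to left translation in the first coordinate. That plaque partition is a measurable partition, and disintegrating $\mu$ over it on $U\cdot B$ gives $\int_B \nu_b\, d\tau(b)$, where $\tau$ is the transverse factor measure and, by uniqueness of conditional measures together with the characterization above, $\nu_b$ is a normalization of the restriction of $\mu^V_{(\cdot)}$ to the plaque labelled by $b$. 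Cover $X$ by countably many such boxes.

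Suppose first that $\mu$ is $V$-invariant; I claim $[\mu^V_x]$ is left Haar for a.e.\ $x$. Fix $v_0\in V$ close enough to $e$ that the map $v_0\cdot$ carries a slightly smaller sub-box into $U\cdot B$; there it fixes the transversal coordinate and acts on each plaque by the left translation $L_{v_0}$. Since $v_0\cdot$ preserves the plaque partition, the conditionals of $(v_0\cdot)_*\mu$ over it are the $(L_{v_0})_*\nu_b$; but $(v_0\cdot)_*\mu=\mu$, whose conditionals over the same partition are the $\nu_b$, so uniqueness forces $(L_{v_0})_*\nu_b=\nu_b$ for $\tau$-a.e.\ $b$. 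Reading this back through the chart, and exhausting $V$ by such boxes, gives $(L_{v_0})_*\mu^V_x=\mu^V_x$ for $\mu$-a.e.\ $x$ and every $v_0$ in some neighbourhood $N$ of $e$. Intersecting the full-measure sets obtained as $v_0$ ranges over a countable dense subset of $N$, and using that $v_0\mapsto(L_{v_0})_*\mu^V_x$ is weak-$\ast$ continuous, we conclude that for a.e.\ $x$ the set $\{v\in V:(L_v)_*\mu^V_x=\mu^V_x\}$ is a closed subgroup of $V$ containing $N$, hence all of $V$ by connectedness; so $\mu^V_x$ is left-invariant and therefore left Haar.

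Conversely, suppose $[\mu^V_x]$ is left Haar a.e., so each $\nu_b$ above is left Haar. For $v_0$ near $e$ we then have, on a sub-box, $(v_0\cdot)_*\mu=\int_B (L_{v_0})_*\nu_b\, d\tau=\int_B\nu_b\, d\tau=\mu$ by left-invariance of Haar; patching this over the countable cover (testing against $f\in C_c(X)$ via a subordinate partition of unity) yields $(v_0\cdot)_*\mu=\mu$ for all $v_0$ in a neighbourhood of $e$, and the same closed-subgroup-plus-connectedness argument promotes this to all $v_0\in V$, so $\mu$ is $V$-invariant. The main obstacle is entirely in the plumbing: building flow-box charts for a merely continuous, locally free $G$-action, identifying the plaque-conditionals $\nu_b$ with the canonical family $[\mu^V_x]$ so that the normalization ambiguity does no harm, and patching the local identities into a global one. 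The only genuinely non-formal input is the connectedness of $V$: it is what forbids a relatively-left-invariant but non-Haar leafwise measure from coexisting with $V$-invariance of $\mu$, since along a single orbit there is no transverse direction in which a modular scaling could be absorbed.
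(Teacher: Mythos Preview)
Your proposal is correct and is exactly the argument the paper has in mind: the paper does not give a detailed proof but simply asserts that the claim ``follows from local disintegration and the definition of the left Haar measure,'' and your flow-box disintegration together with the left-invariance characterization of Haar measure is precisely that. The plumbing you flag (building the charts, matching plaque conditionals with the canonical family, the countable-dense-plus-continuity step to handle the $v_0$-dependence of the null set) is real but routine, and your use of connectedness of $V$ to pass from a neighbourhood of $e$ to all of $V$ is the right way to finish.
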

The remainder of this section is devoted to a number of criteria which will guarantee that $[\mu^V_x]$ is the left Haar measure.

\subsection{Invariance from the structure  of parabolic subgroups}
Recall we write $P= MAN$ for the minimal parabolic subgroup of $G$. 
 Let $P'= P\cap G'$.  Then $P'= M'AN$ is the minimal parabolic subgroup of $G'$.
Suppose $\mu$ is a $P'$-invariant, Borel probability  measure on $X$.  
Given a coarse negative root $\xi\in \hat \Sigma_-$ and a nontrivial subgroup $V\subset G^\xi$ such that $\mu$ is $V$-invariant then, as the stabilizer of a measure is a closed subgroup of $G$, it follows from the structure theory of parabolic subgroups (of $G'$)  that $\mu$ is invariant by the full coarse root group $G^\xi$.   
In the case that the subgroup $V$ above varies with the point $x\in X$, we have the following  lemma.  Note that $G^\xi$ is nilpotent so subgroups of $G^\xi$ are unimodular.  
\begin{lemma}\label{prop:aaalll}
Let $\mu$ be a $P'$-invariant measure on $X$ and suppose for some $\xi\in \Sigma_-$, that for $\mu$-a.e.\ $x\in X$ there is a nontrivial, connected Lie subgroup $V(x)\subset G^\xi$ such that $[\mu^{G^\xi}_x]$ coincides with  the Haar measure on $V(x)$. Moreover, assume the assignment $x\mapsto V(x)$ is measurable and  $A$-invariant.  Then the measure $\mu$ is $G^\xi$-invariant.  
\end{lemma}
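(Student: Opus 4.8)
By Claim~\ref{claim:kklloopp} applied to the subgroup $G^\xi$, showing that $\mu$ is $G^\xi$-invariant is equivalent to showing that $[\mu^{G^\xi}_x]$ is the Haar measure of all of $G^\xi$ for $\mu$-a.e.\ $x$, i.e.\ that $V(x)=G^\xi$ a.e. The plan is to reduce the varying-subgroup hypothesis to the fixed-subgroup situation recorded in the paragraph preceding the lemma, and then to invoke the structure theory of parabolic subgroups of $G'$.

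Assume first that $\mu$ is $A$-ergodic. As $A\subset P'$, the assignment $x\mapsto V(x)$ is a measurable $A$-invariant map into the standard Borel space of connected subgroups of $G^\xi$, hence a.e.\ constant: $V(x)=V_0$ for a.e.\ $x$, where $V_0\subset G^\xi$ is a fixed nontrivial connected subgroup. Since the leafwise measures along $V_0$-orbits are obtained from those along $G^\xi$-orbits by further conditioning, $[\mu^{V_0}_x]$ is the Haar measure of $V_0$ for a.e.\ $x$, so Claim~\ref{claim:kklloopp} (now for $V_0$) shows $\mu$ is $V_0$-invariant. Consequently $\mathrm{Stab}_{G'}(\mu):=\{g\in G': g_*\mu=\mu\}$ is a closed subgroup of $G'$ containing both the minimal parabolic $P'$ and the nontrivial connected subgroup $V_0$ of the negative coarse-root group $G^\xi$. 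A closed subgroup of $G'$ containing $P'$ is a standard parabolic subgroup $Q$; since $Q\supsetneq P'$ contains a nontrivial element of $G^\xi$ and is normalized by $A$, its Lie algebra $\lieq$ --- a sum of full restricted-root spaces --- contains all $\mathrm{Ad}(A)$-weight components of $\mathrm{Lie}(V_0)$ and therefore contains $\lieg^\xi$ in its entirety (for a $BC$ factor one uses additionally that $\lieq$ contains every restricted-root space proportional to one it contains, together with $[\lieg^\xi,\lieg^\xi]=\lieg^{2\xi}$). Hence $G^\xi\subset Q=\mathrm{Stab}_{G'}(\mu)$ and $\mu$ is $G^\xi$-invariant.

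For general $A$-invariant $\mu$, disintegrate $\mu=\int\mu_{\mathfrak c}\,d\hat\mu(\mathfrak c)$ over its $A$-ergodic components. For $\hat\mu$-a.e.\ $\mathfrak c$, the $G^\xi$-leafwise measures of $\mu_{\mathfrak c}$ coincide with those of $\mu$, so $\mu_{\mathfrak c}$ again satisfies the hypotheses of the lemma, and the argument above applies to $\mu_{\mathfrak c}$ once we know $\mu_{\mathfrak c}$ is itself $P'$-invariant; integrating the resulting conclusion over $\mathfrak c$ then gives $G^\xi$-invariance of $\mu$. The main obstacle is precisely this last point --- that the $A$-ergodic components of a $P'$-invariant measure may be taken $P'$-invariant, equivalently that the induced $P'$-action on the space of $A$-ergodic components is $\hat\mu$-essentially trivial; one addresses it using that $P'$ permutes these components preserving $\hat\mu$ and that $A$ acts trivially on their space by construction, noting moreover that in the intended applications the relevant measure is already $A$-ergodic, so this reduction is not needed there. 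By comparison, the two supporting facts used above --- the compatibility of the leafwise measures of $\mu$ with those of its ergodic components, and the restricted-root bookkeeping giving $\lieg^\xi\subset\lieq$ --- are routine given Claim~\ref{claim:kklloopp} and the structure theory of parabolic subgroups.
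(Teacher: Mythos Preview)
Your argument in the $A$-ergodic case is correct and is precisely the fixed-subgroup reasoning recorded in the paragraph preceding the lemma. The difficulty is the reduction from general $\mu$ to the $A$-ergodic case, which you rightly flag as incomplete: you need the $A$-ergodic components $\mu_{\mathfrak c}$ to be $P'$-invariant so that their stabilizers are parabolic, but $M'\subset P'$ centralizes $A$ and may permute the $A$-ergodic components nontrivially, and your sketch does not rule this out. Your aside that ``in the intended applications the relevant measure is already $A$-ergodic'' is also not accurate: the paper applies the lemma to $P$-invariant, $P$-ergodic measures, and it is noted explicitly (proof of Claim~\ref{claim:llmm}) that such measures need not be $A$-ergodic.

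The paper sidesteps this obstacle entirely. It passes to $A$-ergodic components $\mu^e_x$ but shows only that they are $N$-invariant --- which is automatic, since some $s\in A$ contracts all of $N$, so by the pointwise ergodic theorem the partition into ergodic components is refined by the measurable hull of the partition into $N$-orbits. Thus the stabilizer $H(x)$ of $\mu^e_x$ contains $A$, $N$, and (by the leafwise-measure argument you give) the constant value $V$ of $V(\cdot)$ on that component; but $H(x)$ need not be parabolic. Rather than invoking the parabolic classification, the paper uses a direct bracket computation: for any nonzero $Y\in\Lie(V)\subset\lieg^\xi$, the $\mathfrak{sl}(2,\R)$-triple analysis \cite[Lemma~7.73]{MR1920389} gives $(\mathrm{ad}\,Y)^2\,\lieg^{-\xi}=\lieg^\xi$. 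Since $\lieg^{-\xi}\subset\lien\subset\Lie(H(x))$ and $Y\in\Lie(H(x))$, closure under brackets yields $\lieg^\xi\subset\Lie(H(x))$. This produces $G^\xi$-invariance of each $\mu^e_x$ without ever requiring $M'$-invariance of the ergodic components, which is what makes the lemma work at the stated level of generality.
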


\begin{proof}
Let  $\{\mu^e_x\}$ denote the  $A$-ergodic decomposition of $\mu$.   
It is enough to verify that the measure $\mu^e_x$ is $G^\xi$-invariant for almost every $x$.  
First note that there is a $s\in A$ so that, for every $x\in X$ and $y \in N\cdot x$, $d(s^k\cdot x, s^k\cdot y)\to 0$.  It follows that the partition into ergodic components is refined by  the measurable hull  of the partition into $N$-orbits.  In particular, for $\mu$-\ae $x$ the measure  $\mu^e_x$ is $N$-invariant. 

Fix a generic $x\in X$.  Let $V$ be the $\mu^e_x$-a.s.\ constant value of $x\mapsto V(x)$.  
Let $H(x)$ be the closed subgroup of $G$ under which $\mu^e_x$ is invariant 
and let $\lieh_x= \Lie(H(x))$.  

As $-\xi$ is a positive coarse restricted root, we have $\lieg^{-\xi}\subset \lieh_x$.  
Moreover, given a non-zero $Y\in \lie(V)$, from the analysis of $\mathfrak{sl}(2, \R)$ triples  in $\lieg$ (see \cite[Lemma 7.73]{MR1920389}), we have that 
$(\mathrm{ad}(Y))^2$ maps $\lieg^{-\xi}$ onto $\lieg^\xi$.  
In particular $\lieg^\xi\subset \lieh_x$ whence  $\mu^e_x$ is $G^\xi$-invariant.  
\end{proof}

\subsection{High-entropy method}
We have the following theorem of Einsiedler and Katok from which we deduce  invariance  along unipotent subgroups      of an $A$-invariant measures based on its support along coarse root spaces.  We say a Lie subalgebra $\lieh\subset \lieg$ is \emph{contracting} if it is invariant under the adjoint action of $A$ and if  there is some $s\in A$ with $$\lieh= \bigoplus_{\xi\in \hat \Sigma: \xi(s)<0}( \lieg^\xi\cap \lieh).$$
Note that any such $\lieh$ is nilpotent, hence unimodular.   
We state a simplified version of the \emph{High Entropy Theorem} from \cite{MR2191228}.

\begin{theorem}[{High Entropy Theorem, \cite[Theorem 8.5]{MR2191228}}]\label{thm:HE}
Let $\mu$ be an $A$-invariant measure on $X$ and let $\lieh\subset \lieg$ be a contracting Lie algebra with corresponding analytic subgroup $H$. Then for $\mu$-\ae $x$ there are Lie subgroups $$H_x\subset S_x\subset H$$ with 
\begin{enumerate}
\item $\mu^H_x$ is supported on $S_x$;
\item $\mu^H_x$ is invariant under left and right multiplication by $H_x$;
\item\label{3} $H_x$ and $S_x$ are connected and their Lie algebras are direct sums of subspaces of root spaces; 
\item \label{4}$H_x$ is normal in $S_x$ and if $\xi,\xi'\in \hat \Sigma$ with $\xi\neq \xi'$ are distinct coarse roots then for $g\in S_x \cap G^\xi$ and $h\in S_x \cap G^{\xi'}$ the cosets $gH_x$ and $hH_x$ commute in $S_x/H_x$;
\item $\mu^{G^\xi}_x$ is left- and right- invariant under multiplication by elements of $H_x\cap G^\xi$.  
\end{enumerate}
\end{theorem}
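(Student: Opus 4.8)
The statement above is a simplified form of \cite[Theorem 8.5]{MR2191228}, so the plan is to invoke the theorem of Einsiedler and Katok, noting only that their proof is local along $A$-orbits and along the orbits of the root groups $G^\xi$, and therefore applies verbatim when $X$ carries merely a locally free $G$-action rather than being homogeneous. For orientation I describe the mechanism. Everything is organized around the leafwise (conditional) measures $[\mu^{G^\xi}_x]$ introduced above and the foliations $\scrW^\xi$.

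Two structural facts are the input. \emph{(i) $A$-equivariance.} Since $A$ normalizes each $G^\xi$ and $\mu$ is $A$-invariant, $[\mu^{G^\xi}_{a\cdot x}]=(c_a)_*[\mu^{G^\xi}_x]$ for $a\in A$, where $c_a(v)=ava\inv$ maps $G^\xi$ onto itself; and for $g\in G^\xi$ the measure $[\mu^{G^\xi}_{g\cdot x}]$ is the left translate of $[\mu^{G^\xi}_x]$ by $g$. \emph{(ii) Product structure.} Along the contracting group $H$, the leafwise measure $[\mu^H_x]$ factors as a product over the coarse root groups $G^\xi$ with $\xi(s)<0$ — the leafwise analogue of Propositions \ref{en:PS} and \ref{prop:AR} — and each factor $[\mu^{G^\xi}_x]$ is either the Dirac mass at $e$ or nonatomic, with its size governed by the conditional entropy $h_\mu(\td\alpha(s)\mid\scrW^\xi)$.

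I would then set $S_x$ to be the closed subgroup of $H$ generated by the supports of the $[\mu^{G^\xi}_x]$, and $H_x\subseteq S_x$ to be the subgroup under which $[\mu^H_x]$ is left and right invariant; using (i), the $A$-invariance of $\mu$, and Poincar\'e recurrence, one checks that $x\mapsto S_x,H_x$ are measurable and that $S_x$ and $H_x$ are connected with Lie algebras that are direct sums of root spaces. The crux — which I expect to be the main obstacle — is items \ref{3} and \ref{4}: that $H_x\trianglelefteq S_x$ and that for distinct coarse roots $\xi\neq\xi'$ the images of $S_x\cap G^\xi$ and $S_x\cap G^{\xi'}$ commute in $S_x/H_x$. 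This is the genuine high-entropy step. For $g\in S_x\cap G^\xi$ and $h\in S_x\cap G^{\xi'}$, the commutator $[g,h]$ lies in the nilpotent subgroup generated by the higher-height root groups $G^\zeta$ produced by iteratively bracketing $\lieg^\xi$ and $\lieg^{\xi'}$; by conjugating by a well-chosen $a\in A$ that simultaneously contracts $G^\xi$ and $G^{\xi'}$, applying the equivariance in (i), and using recurrence of $\mu$ to return near $x$, one manufactures a continuum of translates of $[\mu^H_x]$ by elements of those higher-root groups, forcing those translations into $H_x$ and hence the asserted commutation modulo $H_x$. The difficulty is making this zooming argument precise: controlling the bounded distortion of $[\mu^{G^\xi}_x]$ under iteration of $c_a$, comparing the nonlinear $G^\xi\!\cdot\!G^{\xi'}$-coordinates with the commutator map, and running the induction over the ``staircase'' of roots $i\xi+j\xi'$ so that every bracket is captured. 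Once \ref{3} and \ref{4} hold, items (1), (2) and (5) follow immediately from the definitions of $S_x$, $H_x$ and from (ii).
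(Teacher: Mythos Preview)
Your proposal is correct in spirit and in fact goes beyond what the paper does: the paper offers no proof of this theorem at all, simply stating it as a simplified version of \cite[Theorem 8.5]{MR2191228} and citing that reference. Your observation that the Einsiedler--Katok argument is local along $A$-orbits and root-group orbits (and so transfers to a locally free $G$-action on $X$) is the only point one might want to note, and the sketch of the mechanism you give is accurate but not required here.
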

It follows that the groups $S_x$ and $H_x$ are equivariant under conjugation by $A$; that is $S_{s\cdot x} = sS_xs\inv$.  Unlike in \cite{MR2191228}, we only consider here the adjoint action of $A$ on $\lieg$.  As this action  is semisimple with real roots, it follows that the groups $S_x$ and $H_x$ are normalized by $A$.  In particular, the maps $x\mapsto S_x$ and $s\mapsto H_x$ are constant along $A$-orbits.


\subsection{Invariance from entropy considerations} Consider again $\mu$ an $A$-invariant, $A$-ergodic measure.  
Given a coarse root $\xi\in \hat \Sigma$ let $\scrW^\xi$ be the partition of $X$ into orbits of $G^\xi$.  We have a standard fact (see for example \cite{MR693976}) that if $\mu$ is $G^\xi$-invariant then for $s\in A$ with $\xi(s)>0$, the   entropy of the action of $s$ on $ (X,\mu)$ conditioned along orbits of $G^\xi$ is given by 
$$h_{\mu} (s\mid \scrW^\xi) = \sum_{\beta \in \xi} \beta(s)\dim (\lieg^\beta).$$
The converse also holds. 
\begin{lemma}\label{lem:fullentropyinvariant}
Let $\xi\in \hat \Sigma$ be such that $$h_\mu(s\mid \scrW^\xi) = \sum_{\beta \in \xi} \beta(s)\dim (\lieg^\beta)$$ for some $s\in A$ with $\xi(s)>0$.  
Then $\mu$ is $G^\xi$-invariant. 
\end{lemma}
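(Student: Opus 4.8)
The plan is to derive $G^\xi$-invariance of $\mu$ from the maximal-entropy hypothesis by combining the classical Ledrappier--Young theory of entropy and conditional measures along unstable manifolds with the structure results collected above. Fix $s\in A$ with $\xi(s)>0$, and consider the foliation $\scrW^\xi$ into $G^\xi$-orbits. For $\mu$-a.e.\ $x$ we have the conditional measure class $[\mu^{G^\xi}_x]$ on $G^\xi$, supported in a subgroup, with identity in its support. The entropy $h_\mu(s\mid \scrW^\xi)$ is controlled by the local rate of expansion of $s$ along $\scrW^\xi$, which here is exactly $\sum_{\beta\in\xi}\beta(s)\dim\lieg^\beta$ since the $G$-action is locally free and the derivative cocycle along $E^G$ realizes the roots; this gives the general inequality $h_\mu(s\mid \scrW^\xi)\le \sum_{\beta\in\xi}\beta(s)\dim\lieg^\beta$, with the standard characterization that equality holds iff $[\mu^{G^\xi}_x]$ is (equivalent to) the Haar measure on all of $G^\xi$ for a.e.\ $x$. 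By Claim \ref{claim:kklloopp}, once $[\mu^{G^\xi}_x]$ is the full Haar measure on $G^\xi$ a.e., the measure $\mu$ is $G^\xi$-invariant, which is the desired conclusion.

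First I would reduce to the ergodic case: decompose $\mu$ into its $A$-ergodic components $\mu^e_x$; entropy is affine over the ergodic decomposition and the conditional measures $[\mu^{G^\xi}_x]$ are unchanged (up to normalization) under ergodic decomposition along $A$-orbits, so it suffices to treat each ergodic component, and by hypothesis (using that $h_\mu(s\mid\scrW^\xi)$ is the integral of the component entropies against the upper bound that is constant in $x$) a.e.\ component again achieves the maximal value. Then I would invoke the Ledrappier--Young/Margulis--Ruelle-type machinery in the form adapted to the tame measurable foliation $\scrW^\xi$ (as set up via the charts of Proposition \ref{charts ok} and the entropy-product results of \cite{AWB-GLY-P3} used above): the conditional measure $\mu^{G^\xi}_x$ admits a dimension/entropy identity, and $h_\mu(s\mid\scrW^\xi)$ equals the Lyapunov-weighted dimension of $\mu^{G^\xi}_x$ inside $G^\xi\cong \bigoplus_{\beta\in\xi}\lieg^\beta$. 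Saturating this bound forces $\mu^{G^\xi}_x$ to have full dimension and, by the standard rigidity of the equality case (the SRB-type characterization), to be absolutely continuous, hence Haar, on $G^\xi$.

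Alternatively --- and this may be the cleaner route to write --- I would use the High Entropy Theorem \ref{thm:HE} applied to a contracting algebra $\lieh$ containing $\lieg^\xi$: it produces, for a.e.\ $x$, groups $H_x\subset S_x$ with $\mu^{G^\xi}_x$ left/right invariant under $H_x\cap G^\xi$ and supported on $S_x\cap G^\xi$. The maximal-entropy hypothesis then pins down $S_x\cap G^\xi = G^\xi$ and $H_x\cap G^\xi = G^\xi$ (any proper subgroup would strictly lower the entropy contribution along $\scrW^\xi$), so $\mu^{G^\xi}_x$ is bi-invariant under all of $G^\xi$ and therefore Haar on $G^\xi$; Claim \ref{claim:kklloopp} finishes. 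I expect the main obstacle to be the first approach's equality-case analysis --- making rigorous that saturating $h_\mu(s\mid\scrW^\xi)$ forces absolute continuity of $\mu^{G^\xi}_x$ in the non-compact, measurable-foliation setting --- which is precisely why the High Entropy Theorem, which already encodes this rigidity abstractly, is the lever I would lean on; the remaining work is just matching the dimensions of $H_x\cap G^\xi$ and $S_x\cap G^\xi$ against the root multiplicities $\dim\lieg^\beta$.
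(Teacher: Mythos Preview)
Your first approach is essentially what the paper does, though the paper is terser: it simply cites Ledrappier's \cite[Theorem 3.4]{MR743818}, which shows directly that equality in the entropy inequality forces the conditional measures along $G^\xi$-orbits to be absolutely continuous, and then notes that the explicit density computation in that proof (see also \cite[(6.1)]{MR819556}) yields that the density is constant, i.e.\ the conditionals are Haar. Claim \ref{claim:kklloopp} then finishes. So the Ledrappier--Young route you sketch is correct and is the paper's argument; the ergodic-decomposition reduction you insert is unnecessary since the ambient section already assumes $\mu$ is $A$-ergodic, but it does no harm.

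Your second, ``cleaner'' route via the High Entropy Theorem has a genuine gap. Theorem \ref{thm:HE} produces the subgroups $H_x\subset S_x$, but it does \emph{not} provide any formula relating $h_\mu(s\mid\scrW^\xi)$ to $\dim(H_x\cap G^\xi)$ or $\dim(S_x\cap G^\xi)$. The content of that theorem is that $H_x$ is forced to be large by \emph{commutator relations between distinct coarse roots} (property \ref{4}); applied to a single coarse root $\xi$, it gives no nontrivial lower bound on $H_x\cap G^\xi$ at all. Your assertion that ``any proper subgroup would strictly lower the entropy contribution along $\scrW^\xi$'' is precisely the equality case of the Ledrappier--Young inequality that you were hoping to avoid---it is not a consequence of Theorem \ref{thm:HE}. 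So the High Entropy Theorem is not a substitute here; you must go through the Ledrappier equality-case analysis (which, fortunately, is already available off the shelf in \cite{MR743818}).
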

Indeed, Ledrappier shows in \cite[Theorem 3.4]{MR743818}   that $\mu$ has absolutely continuous conditional measures along $G^\xi$-orbits. Moreover, from the explicit computation of the density function in the proof of \cite[Theorem 3.4]{MR743818} 
 it follows that the conditional measures of $\mu$ along $G^\xi$-orbits coincide with the image of the Haar measure on $G^\xi$.  See also   \cite[(6.1)]{MR819556} for the argument in English.  
 From Claim \ref{claim:kklloopp} it follows that $\mu$ is $G^\xi$-invariant.   

We remark that deriving extra invariance of a measure by verifying that conditional entropy is maximized also underlies the proof of the so-called ``invariance principle'' for fiber-wise conditional measures invariant under a skew product, developed by Ledrappier in \cite{MR850070} and extended in \cite{MR2651382}.

\section{Main   propositions and proofs of Theorems \ref{thm:invmeas}, \ref{thm:measconjfin}, and \ref{thm:measconj}}

\subsection{Non-resonance implies  invariance}
We return to the setting introduced in Section \ref{sec:2}.  
Consider an $A$-invariant, $A$-ergodic measure $\mu$ on $M^\alpha$ satisfying \ref{H6} of Proposition \ref{charts ok}.  We  say a restricted root $\beta\in \Sigma$ of $\lieg$ is \emph{resonant} (with the fiber exponents $\calL^F(\mu)$ of $\mu$) if there exists a $c>0$ and a $\lambda\in \calL^F(\mu)$ with 
	$$\beta = c\lambda.$$
If no such $c$ and $\lambda$ exist, we say $\beta$ is \emph{non-resonant}.  
We similarly say that a fiberwise Lyapunov exponent $\lambda\in \calL^F(\mu)$ is \emph{resonant} (with $\lieg$) if there is a $c>0$ and $\beta\in \Sigma$ with $$\lambda= c \beta.$$
Note that resonance and non-resonance are well-defined on the set of coarse restricted roots $\hat \Sigma $ and coarse fiberwise exponents $\hat \calL^F(\mu)$. 

The proof of Theorem \ref{thm:invmeas} follows directly from the following key proposition.  
\begin{proposition}\label{thm:nonresonantimpliesinvariant}
Let $\mu$ be an $A$-invariant, $A$-ergodic Borel  probability measure on $M^\alpha$ such that the image of $\mu$ in $G/\Gamma$ is $G'$-invariant.  
 Then, given a coarse restricted root $\xi\in \hat \Sigma$   that is non-resonant with the fiberwise Lyapunov exponents of $\mu$, the measure $\mu$ is $G^\xi$-invariant for the action $\td \alpha$.  
\end{proposition}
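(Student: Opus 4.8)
The plan is to exploit the entropy product structure (Proposition \ref{en:PS}) together with the coarse-Lyapunov Abramov--Rohlin formula (Proposition \ref{prop:AR}) to force the conditional entropy of $\td\alpha(s)$ along the coarse root direction $\xi$ to be as large as possible, and then invoke Lemma \ref{lem:fullentropyinvariant} to conclude $G^\xi$-invariance. First I would choose $s\in A$ with $\xi(s)>0$ and consider the coarse Lyapunov exponent $\chi\in\hat\calL$ for the total cocycle on $(M^\alpha,\mu)$ that is positively proportional to $\xi$ (so $\chi(G)=\xi$); the associated coarse manifold $\scrW^\chi$ has tangent space $E_\chi = E^G_\xi \oplus E^F_{\chi(F)}$, where $E^G_\xi = \bigoplus_{\beta\in\xi}\lieg^\beta$ is the fixed algebraic piece and $E^F_{\chi(F)}$ is the fiberwise part. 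The non-resonance hypothesis is precisely the statement that $\chi(F)=0$: no fiberwise Lyapunov exponent is positively proportional to any $\beta\in\xi$. Hence by Proposition \ref{prop:AR}, $h_\mu(\td\alpha(s)\mid\scrW^\chi) = h_\nu(s\mid\xi)$, since the fiberwise term $h_\mu(\td\alpha(s)\mid\scrW^{\chi(F)})$ vanishes when $\chi(F)=0$.

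Next I would compute $h_\nu(s\mid\xi)$. Because the image $\nu$ of $\mu$ in $G/\Gamma$ is $G'$-invariant, $\nu$ is (up to the compact-factor averaging in Claim \ref{claim:ergdecom}) the homogeneous measure, and the entropy of translation by $s$ along the $G^\xi$-orbit foliation is the full value $\sum_{\beta\in\xi}\beta(s)\dim(\lieg^\beta)$ --- this is the standard computation for the geodesic/Weyl-chamber flow on $G/\Gamma$ restricted to a horospherical direction. Therefore
\[
h_\mu(\td\alpha(s)\mid\scrW^\chi) \;=\; \sum_{\beta\in\xi}\beta(s)\dim(\lieg^\beta).
\]
Now I would observe that $\scrW^\chi$ refines the $G^\xi$-orbit foliation $\scrW^\xi$ (again because $\chi(F)=0$ forces $E_\chi = E^G_\xi$, so the coarse manifold $W^\chi(p)$ is exactly $\td\alpha(G^\xi)(p)$), and hence $h_\mu(s\mid\scrW^\xi) = h_\mu(\td\alpha(s)\mid\scrW^\chi)$ equals the maximal value. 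Lemma \ref{lem:fullentropyinvariant} then yields that $\mu$ is $G^\xi$-invariant, which is the claim. (If $\mu$ is not $A$-ergodic one would first pass to the $A$-ergodic decomposition; but the statement already assumes $A$-ergodicity, so this is not needed.)

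The main obstacle I anticipate is the identification $W^\chi(p) = \td\alpha(G^\xi)(p)$ and, relatedly, making sure the entropy bookkeeping is airtight when $\chi(F)=0$: one needs that a non-resonant coarse root $\xi$ contributes a coarse Lyapunov class $\chi$ whose full subspace $E_\chi$ is purely tangent to the $G$-orbit, with no hidden fiberwise directions sneaking in through a different proportionality constant --- this is exactly what "coarse" non-resonance (allowing the constant $c$ to range over $\{\tfrac12,1,2\}$ in the $BC_\ell$ case) is designed to rule out. A secondary point to handle carefully is the non-compact case: the conclusions of Proposition \ref{charts ok}, in particular the $L^{d,1}$-integrability \ref{H6}, are needed for the multiplicative ergodic theorem and for Propositions \ref{en:PS} and \ref{prop:AR} to apply, so I would record at the outset that these hold under the standing assumption that $\nu$ is $G'$-invariant. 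Everything else is an assembly of the quoted results.
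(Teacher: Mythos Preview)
Your proposal is correct and follows essentially the same route as the paper: identify the coarse exponent $\chi$ with $\chi(G)=\xi$ and $\chi(F)=0$ by non-resonance, use Proposition~\ref{prop:AR} together with $G'$-invariance of $\nu$ to get $h_\mu(\td\alpha(s)\mid\scrW^\chi)=\sum_{\beta\in\xi}\beta(s)\dim(\lieg^\beta)$, note that $\scrW^\chi=\scrW^\xi$ since $\chi(F)=0$, and conclude via Lemma~\ref{lem:fullentropyinvariant}. The anticipated ``obstacles'' you flag are exactly the points the paper handles in passing, and your treatment of them is accurate.
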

\begin{proof}
Indeed if $\xi$ is a non-resonant coarse restricted root then $\xi =\chi(G)$ for some coarse exponent $\chi\in \hat \calL$ with $\chi(F) = 0$.  
Let $\nu$ denote the image of  $\mu$ in $G/\Gamma$.  Since $\nu$ is $G'$-invariant, it follows for $s\in A$ with $\xi(s)>0$  that $$h_\nu(s\mid \xi) = \sum_{\beta\in \xi} \beta(s)\dim (\lieg^\beta).$$
From Proposition \ref{prop:AR} and the fact that the partitions  $\scrW^\chi= \scrW^\xi$ coincide in $M^\alpha$, it follows that $h_\mu (\td \alpha(s)\mid \scrW^{\chi(F)})=0$ whence

$$h_\mu(\td \alpha (s) \mid \scrW^\xi) =  \sum_{\beta\in \xi} \beta(s)\dim (\lieg^\beta).$$
The $G^\xi$-invariance of $\mu$ then follows from Lemma  \ref{lem:fullentropyinvariant}.  
\end{proof}

We remark that the proof of Proposition \ref{thm:nonresonantimpliesinvariant} is similar to key steps in  \cite{MR1253197} and \cite{1302.3320} where one deduces extra invariance of a measure by computing conditional entropy, verify the entropy is the maximal value permitted by  the Margulis--Ruelle inequality, and applying Ledrappier's result Lemma  \ref{lem:fullentropyinvariant} to obtain invariance.

\subsection{$P$-invariant measures and the proof of Theorem \ref{thm:invmeas}}
Recall that $P$ is   the minimal standard parabolic subgroup and is hence amenable.  It follows 
that there exists an invariant probability measure $\mu$ for the  restriction to $P$ of the action  $\td \alpha$  on $M^\alpha$. Moreover, it follows that any such measure factors to the Haar measure on $G/\Gamma$.
Fix a $P$-invariant, $P$-ergodic measure $\mu$  on $M^\alpha$.  Recall that $A\subset P$  and the data  $r(\cdot)$, $\lambda_i(\cdot)$, $E_{\lambda_i}(\cdot)$ defined in Theorem \ref{thm:OMT} for the action of $A$ on $(M,\mu)$ as well as the corresponding data $r^F(\cdot)$, $\lambda^F_i(\cdot)$, and $E_{\lambda_i ^F}(\cdot)$ and $r^G(\cdot)$, $\lambda^G_i(\cdot)$, and $E_{\lambda_i ^G}(\cdot)$ for the fiberwise and orbit cocycles.  As observed earlier, the data $r^G(\cdot)$, $\lambda^G_i(\cdot)$, and $E_{\lambda_i ^G}(\cdot)$ are independent of the measure $\mu$ and the point.  We show for $\mu$ as above, the   remaining data is independent of the point.  

\begin{claim}\label{claim:llmm}
Suppose that   $\mu$ is a $P$-invariant, $P$-ergodic measure.  Then the functions $r(\cdot), r^F(\cdot)$, $ \lambda_i(\cdot)$,  $ \lambda^F_i(\cdot)$ and the  dimensions of the corresponding subspaces $E_{\lambda_i}(\cdot)$, $E_{\lambda_i ^F}(\cdot)$ are constant almost surely. 
\end{claim}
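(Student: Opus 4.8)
\noindent\textbf{Proof proposal for Claim \ref{claim:llmm}.}
The plan is to show that the entire Lyapunov datum for the $A$-action is $P$-invariant modulo $\mu$; since $\mu$ is $P$-ergodic, $\mu$-a.s.\ constancy follows at once. First I would reduce to the full derivative cocycle $\restrict{D\td\alpha}{A}$ on $TM^\alpha$. Because $E^F$ and $E^G$ are complementary $\restrict{D\td\alpha}{A}$-invariant subbundles, uniqueness of the Oseledec splitting in Theorem \ref{thm:OMT} forces the Oseledec decomposition of $T_pM^\alpha$ to be the common refinement of those of $E^F(p)$ and $E^G(p)$; hence $\calL(p)=\calL^F(p)\cup\calL^G(p)$ as collections of functionals, with $\dim E_\varphi(p)=\dim E^F_\varphi(p)+\dim E^G_\varphi(p)$ for each functional $\varphi$ (terms absent when $\varphi$ is not an exponent of the corresponding restriction). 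Since $\calL^G(p)=\Sigma$ and the numbers $\dim E^G_{\lambda^G_j}(p)$ are already known to be independent of $p$, it is enough to prove that $L(p):=\bigl(\calL(p);\ \{\dim E_{\lambda_i}(p)\}\bigr)$ is $\mu$-a.s.\ constant: this then forces $r(\cdot)$, $r^F(\cdot)$, $\lambda^F_i(\cdot)$ and the dimensions of $E_{\lambda_i}(\cdot)$ and $E_{\lambda^F_i}(\cdot)$ to be constant as well. By Theorem \ref{thm:OMT} (applicable because $\mu$ projects to Haar, which is $G'$-invariant, so the charts satisfy \ref{H1}--\ref{H6}), $L$ is an $A$-invariant measurable function, so, writing $P=MAN$, it remains to check that $L$ is invariant under $M$ and under $N$ mod $\mu$.

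For invariance under $M$ I would use that $M$ centralizes $A$, preserves $\mu$ (as $M\subset P$), and carries the splitting $E^F\oplus E^G$ to itself. From $\td\alpha(m)\td\alpha(a)=\td\alpha(a)\td\alpha(m)$ and the cocycle identity, for $s\in A$ one obtains
$D_{m\cdot p}\td\alpha(s^k)=D_{\td\alpha(s^k)p}\td\alpha(m)\circ D_p\td\alpha(s^k)\circ\bigl(D_p\td\alpha(m)\bigr)\inv$,
and the integrability estimate \ref{H6} of Proposition \ref{charts ok} together with compactness of $M$ guarantees that the conjugating factors grow subexponentially along $\mu$-a.e.\ $A$-orbit, hence do not affect Lyapunov exponents. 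Thus $D_p\td\alpha(m)$ conjugates the Oseledec datum at $p$ to that at $m\cdot p$, giving $L(m\cdot p)=L(p)$ for $\mu$-a.e.\ $p$.

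For invariance under $N$ I would rerun the contraction argument already used in the proof of Lemma \ref{prop:aaalll}: choosing $s\in A$ with $\beta(s)<0$ for every $\beta\in\Sigma_+$, the operator $\mathrm{Ad}(s)$ contracts $\lien$, so $s^kn_0s^{-k}\to e$ for each $n_0\in N$; since $\td\alpha(s^k)(n_0\cdot p)=\bigl(s^kn_0s^{-k}\bigr)\cdot\td\alpha(s^k)(p)$ and elements of $N$ displace points of $M^\alpha$ by a distance controlled by the right-invariant metric on $G$, we get $d\bigl(\td\alpha(s^k)p,\td\alpha(s^k)(n_0\cdot p)\bigr)\to 0$. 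As in that proof, this shows the partition of $(M^\alpha,\mu)$ into $A$-ergodic components is refined by the measurable hull of the partition into $N$-orbits; in particular $p$ and $n_0\cdot p$ lie in a common $A$-ergodic component for $\mu$-a.e.\ $p$. Since the $A$-invariant function $L$ is constant on each $A$-ergodic component, $L(n_0\cdot p)=L(p)$ a.e. Combining this with the $M$-invariance and the $A$-invariance of $L$, the function $L$ is $P$-invariant mod $\mu$, and $P$-ergodicity of $\mu$ yields the claim.

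I expect the main conceptual obstacle to be precisely that $\mu$ is only assumed $P$-ergodic rather than $A$-ergodic, so a priori the Lyapunov exponents could vary from one $A$-ergodic component to another; this is exactly what the $N$-contraction step rules out, by forcing the $A$-ergodic components to be $N$-invariant. The remaining technical point — the noncompactness of $M^\alpha$ for nonuniform $\Gamma$, which prevents using uniform $C^1$ bounds when discarding the conjugating derivatives in the $M$-step — is handled by the slowly-growing/integrability controls of Proposition \ref{charts ok}.
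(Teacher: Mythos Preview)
Your proposal is correct and follows essentially the same route as the paper: both arguments use a contracting element $s\in A$ to show that the $A$-ergodic components of $\mu$ are $N$-invariant (so the Lyapunov data is constant along $AN$-orbits), then use that $M$ centralizes $A$ to obtain $M$-invariance, and conclude by $P$-ergodicity. Your write-up is more explicit than the paper's in two places---the initial reduction from the fiberwise data to the full cocycle via $\calL=\calL^F\cup\calL^G$, and the cocycle/conjugation justification for the $M$-step---whereas the paper simply asserts these; note only that the subexponential growth of $D_{\td\alpha(s^k)p}\td\alpha(m)$ for fixed $m\in M$ is not literally covered by \ref{H6} (which concerns elements of $A$), but follows by the same estimates in Section~\ref{sec:charts} applied with $m$ in place of the generators $s_j$.
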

 \begin{proof}
Note that $\mu$ is $P$-ergodic  but need not be $A$-ergodic. 
Let $\{\mu^e_p\}_{p\in M^\alpha}$ denote the $A$-ergodic decomposition of $\mu$.
We may select $s\in A$ so that $\beta(s) <0$ for every $\beta \in \Sigma_+$.  By the pointwise ergodic theorem, it follows that ergodic components are refined by the measurable hull of the partition into  $N$-orbits.  Then $\mu^e_p$ is $N$-invariant for almost every $p\in M^\alpha$.  It follows that the data in the claim is constant along $AN$-orbits.

Finally, recall that $P =MAN$ with $M$ 
contained in the centralizer of $A$.  It follows that the data is constant along $M$ orbits.  By the $P$-ergodicity of $\mu$,  the result follows.  
\end{proof}

From  Claim \ref{claim:llmm} it follows that for any $P$-invariant, $P$-ergodic measure $\mu$ on $M^\alpha$, the set of resonant roots depends only on the measure $\mu$ and not the decomposition of $\mu$ into $A$-ergodic components.

Theorem  \ref{thm:invmeas} now follows immediately from Proposition \ref{thm:nonresonantimpliesinvariant}.  
\begin{proof}[Proof of Theorem \ref{thm:invmeas}]
	Let $\mu$ be any $P$-invariant, $P$-ergodic measure on $M^\alpha$. 
	Then $\mu$ is invariant under a closed subgroup $Q\supset G$ with $P\subset Q$. 
	If $\dim M< r(G)$ then there at most $r(G)-1$ fiberwise  Lyapunov  exponent functionals in $\calL^F$, hence at most 
	$r(G)-1$  coarse fiberwise Lyapunov  exponent functionals in $\hat \calL^F$. 
	It follows that there are at most $r(G)-1$ resonant  coarse restricted roots $\xi\in \hat \Sigma$.  From Proposition \ref{thm:nonresonantimpliesinvariant}, it follows that $Q$ is a standard parabolic subgroup with resonant codimension strictly smaller then $r(G)$.  But then $Q = G$ by definition of $r(G)$.  
	
It follows that $\mu$ is a $G$-invariant, Borel probability measure on $M^\alpha$.  From Claim \ref{claim:measureupstairs}, it follows that there exists a $\Gamma$-invariant Borel probability measure on $M$.
\end{proof}

\def\Qin{Q_{\mathrm{In}}}
\def\Qout{Q_{\mathrm{Out}}}

\subsection{Parabolic subgroups associated to conditional measures}  
\label{sec:QinQout}

We continue to assume  $\mu$ is a $P$-invariant, $P$-ergodic measure on $M^\alpha$.   The proof of Theorems \ref{thm:measconjfin} and \ref{thm:measconj} follow from an analysis of the geometry of the measures $[\mu^G_p]$  constructed in the previous section.  

We define subgroups  $\Qin(\mu)\subset \Qout(\mu)$ of $G$  as follows:
Given $p\in M^\alpha$ let
\begin{enumerate}  
\item $\Qin(\mu)$  denote  the largest subgroup of $G$ for which $\mu$ is invariant for the action $\td \alpha$;
\item $\Qout(\mu; p)$ denote the smallest, closed,  $[\mu^G_p]$-co-null subgroup of $G$.  
\end{enumerate}
Note that both $\Qin(\mu)$ and $ \Qout(\mu;p)$ are standard parabolic subgroups.  
As     $P\subset \Qout(\mu; p)$, it follows that $\Qout(\mu; p)$ is constant along $P$-orbits.  By $P$-ergodicity of $\mu$,   we write $\Qout(\mu)$ for the almost-surely constant value of $\Qout(\mu; p)$.  

Theorems \ref{thm:measconjfin} and \ref{thm:measconj} will follow from verifying that  $\Qin(\mu)= \Qout(\mu)$.   We use the criteria in the previous section to  verify this condition.  
First, consider the  case that every fiberwise Lyapunov exponent $\lambda_i^F$  of $\mu$ is resonant with a negative root.  In this setting we immediately obtain that $\Qin(\mu)$ and $ \Qout(\mu)$ coincide.  
\begin{proposition}\label{thm:resonant}
Suppose that for every $\lambda_i^F\in \calL^F$ there is a $\beta\in \Sigma_-$ and $c>0$ with $\lambda_i^F= c\beta$.  Then $\Qin(\mu)= \Qout(\mu)$.
\end{proposition}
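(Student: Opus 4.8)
The plan is to prove the two inclusions separately. The inclusion $\Qin(\mu)\subseteq\Qout(\mu)$ is essentially formal: $\mu$ is invariant under the connected group $\Qin(\mu)$, so by Claim \ref{claim:kklloopp} the conditional $[\mu^{\Qin(\mu)}_p]$ is the left Haar measure on $\Qin(\mu)$ for $\mu$-a.e.\ $p$; since its support is all of $\Qin(\mu)$, we get $\Qin(\mu)\subseteq\Qout(\mu;p)=\Qout(\mu)$. The content is therefore the reverse inclusion $\Qout(\mu)\subseteq\Qin(\mu)$. As $\Qout(\mu)$ is a standard parabolic containing $P=MAN$, it is generated by $P$ together with the coarse root groups $G^\xi$, $\xi\in\hat\Sigma_-$, with $\lieg^\xi\subseteq\Lie(\Qout(\mu))$; since $\mu$ is already $P$-invariant, it suffices to show $\mu$ is $G^\xi$-invariant for every such $\xi$. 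By Claim \ref{claim:llmm} the fiberwise Lyapunov data, hence the set of resonant roots, is constant $\mu$-a.e., so we may decompose $\mu$ into $A$-ergodic components whenever a result requires $A$-ergodicity; the conditional measures $[\mu^G_p]$, and hence $\Qout(\mu)$, are unaffected, and each component projects to an $A$-ergodic component of Haar, which is $G'$-invariant (Claim \ref{claim:ergdecom}).

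Fix $\xi\in\hat\Sigma_-$ with $\lieg^\xi\subseteq\Lie(\Qout(\mu))$. If $\xi$ is non-resonant with $\calL^F(\mu)$, then Proposition \ref{thm:nonresonantimpliesinvariant} applies directly (componentwise) and $\mu$ is $G^\xi$-invariant. Suppose instead $\xi$ is resonant; by the hypothesis of the proposition every fiberwise exponent is positively proportional to a \emph{negative} root, so the coarse Lyapunov exponent $\chi\in\hat\calL$ with $\chi(G)=\xi$ satisfies $\chi(F)\neq0$, with $\chi(F)$ the coarse fiber exponent positively proportional to $\xi$. The goal is now to show, for $s\in A$ with $\xi(s)>0$, that
\[
h_\mu\big(\td\alpha(s)\mid\scrW^\xi\big)=\sum_{\beta\in\xi}\beta(s)\dim\lieg^\beta ,
\]
whereupon Lemma \ref{lem:fullentropyinvariant} yields $G^\xi$-invariance. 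This is where both $\lieg^\xi\subseteq\Lie(\Qout(\mu))$ and the blanket resonance assumption enter: the former guarantees (disintegrating $[\mu^G_p]$ along right $G^\xi$-cosets) that $[\mu^{G^\xi}_p]$ is non-atomic for $\mu$-a.e.\ $p$, while the latter forces $\scrW^\chi(p)$ to be, a.e., fibered by $G^\xi$-orbit leaves over the fiberwise coarse manifolds $\scrW^{\chi(F)}$ — there being no other Lyapunov direction positively proportional to $\xi$. This product structure gives the subadditivity
\[
h_\mu\big(\td\alpha(s)\mid\scrW^\chi\big)\le h_\mu\big(\td\alpha(s)\mid\scrW^\xi\big)+h_\mu\big(\td\alpha(s)\mid\scrW^{\chi(F)}\big),
\]
and, combined with Proposition \ref{prop:AR} (which gives $h_\mu(\td\alpha(s)\mid\scrW^\chi)=h_{\pi_\ast\mu}(s\mid\xi)+h_\mu(\td\alpha(s)\mid\scrW^{\chi(F)})$, the fiber term being finite) and the maximality $h_{\pi_\ast\mu}(s\mid\xi)=\sum_{\beta\in\xi}\beta(s)\dim\lieg^\beta$ for the Haar measure, yields $h_\mu(\td\alpha(s)\mid\scrW^\xi)\ge\sum_{\beta\in\xi}\beta(s)\dim\lieg^\beta$; the reverse inequality is the Margulis--Ruelle bound along $\scrW^\xi$, so equality holds.

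An alternative route for the resonant case runs through the structural lemmas: from $\lieg^\xi\subseteq\Lie(\Qout(\mu))$ (hence $[\mu^{G^\xi}_p]$ non-atomic) and the High Entropy Theorem \ref{thm:HE}, applied to a contracting subalgebra containing $\lieg^\xi$, one extracts a non-trivial connected subgroup $V(p)\subseteq G^\xi$, with $x\mapsto V(x)$ measurable and $A$-equivariant, under which $\mu^{G^\xi}_p$ is left- and right-invariant — the resonance hypothesis again being what makes the full $\xi$-direction visible and $\mu^{G^\xi}_p$ supported on $V(p)$, so $[\mu^{G^\xi}_p]$ is Haar on $V(p)$ — after which Lemma \ref{prop:aaalll} (using that $\mu$ is $P'=P\cap G'$-invariant) upgrades this to $G^\xi$-invariance of $\mu$. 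Either way, once $\mu$ is $G^\xi$-invariant for all negative coarse roots $\xi$ in $\Qout(\mu)$, it is invariant under the group they generate together with $P$, namely $\Qout(\mu)$, giving $\Qout(\mu)\subseteq\Qin(\mu)$ and hence equality. The main obstacle is the middle step in the resonant case: establishing the product structure of $\scrW^\chi$ (equivalently, that the maximal Haar entropy along $G^\xi$-orbits in $G/\Gamma$ descends to maximal conditional entropy on $M^\alpha$), which is precisely where one must simultaneously exploit that the $G^\xi$-conditionals are spread out (from $G^\xi\subseteq\Qout(\mu)$) and that no non-resonant fiberwise Lyapunov direction lies transverse to them (from the global resonance hypothesis).
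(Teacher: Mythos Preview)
Your entropy route has a genuine gap: the subadditivity
\[
h_\mu\big(\td\alpha(s)\mid\scrW^\chi\big)\le h_\mu\big(\td\alpha(s)\mid\scrW^\xi\big)+h_\mu\big(\td\alpha(s)\mid\scrW^{\chi(F)}\big)
\]
is not true in general, and if it were your argument would prove far too much.  Indeed, nothing in your chain of inequalities uses either $\lieg^\xi\subset\Lie(\Qout(\mu))$ or the resonance hypothesis; it would force $h_\mu(\td\alpha(s)\mid\scrW^\xi)=\sum_{\beta\in\xi}\beta(s)\dim\lieg^\beta$ for \emph{every} negative coarse root $\xi$, hence $G$-invariance of every $P$-invariant, $P$-ergodic $\mu$.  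The obstruction is precisely the ``graph'' scenario: the conditional of $\mu$ on a $\scrW^\chi$-leaf may be supported on a set transverse to both sub-foliations $\scrW^\xi$ and $\scrW^{\chi(F)}$, so that $[\mu^{G^\xi}_p]$ and $[\mu^{\scrW^{\chi(F)}}_p]$ are both atomic (giving zero on the right side) while $h_\mu(\td\alpha(s)\mid\scrW^\chi)=h_\nu(s\mid\xi)>0$ by Proposition~\ref{prop:AR}.  Your alternative route has the same defect at its first step: $\lieg^\xi\subset\Lie(\Qout(\mu))$ does \emph{not} by itself force $[\mu^{G^\xi}_p]$ to be non-atomic (the paper's Claim~\ref{claim:nontrivialdirection} only produces \emph{one} such $\xi$, and requires an argument), and the High Entropy Theorem does not manufacture a nontrivial $V(p)$ from a single coarse root group with nontrivial support.

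What you are missing is the one structural consequence of the hypothesis that the paper actually exploits: since every fiberwise exponent is positively proportional to a \emph{negative} root, there exists $s_0\in A$ with $\lambda_i^F(s_0)<0$ for all $i$, so the fiberwise conditional measures $\mu_{g}$ are supported on finite sets.  This is what lets the paper run a Katok--Spatzier style rigidity argument: taking $A'=\ker\xi$, one shows (using the finite fiber support and ergodicity of $A'$ on $G'/\Gamma$) that the $A$- and $A'$-ergodic decompositions of $\mu$ coincide, hence $G^\xi$-orbits lie inside $A'$-ergodic components; since $A'$ centralizes $G^\xi$, recurrence then forces $[\mu^{G^\xi}_p]$ to be Haar on a nontrivial subgroup $V(p)\subset G^\xi$, and Lemma~\ref{prop:aaalll} finishes.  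Your proposal never invokes the finiteness of fiber measures, which is exactly the lever that rules out the graph obstructions above.
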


We also verify that  $\Qin(\mu)= \Qout(\mu)$ given the combinatorics of the number $m(G)$.

\begin{proposition}\label{thm:High entropy}
Suppose $\lieg$ has no rank-1 simple ideals and that $\Qin(\mu)$ is a maximal parabolic subgroup.  Then $\Qin(\mu)= \Qout(\mu)$.
\end{proposition}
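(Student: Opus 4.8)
The plan is to exploit the fact that $\Qin(\mu)$ being maximal forces all but one coarse restricted root to ``point into'' $\Qin(\mu)$, and then use the High Entropy Theorem (Theorem \ref{thm:HE}) together with the structure of $\scrW^G = \Gol$-conditional measures to conclude that the one remaining coarse root group also preserves $\mu$. Write $\Qin = \Qin(\mu)$; since $\lieg$ has no rank-$1$ simple ideals and $\Qin$ is maximal, $\Qin = Q_{\Pi \sm \{\alpha_j\}}$ for a single simple root $\alpha_j$ in some simple factor. Let $\xi_0 \in \hat\Sigma_-$ be a coarse negative root with $\lieg^{\xi_0} \not\subset \lieq_{\mathrm{In}}$ chosen so that $-\xi_0$ is, among such roots, ``lowest'' — concretely, so that its support on $\alpha_j$ is minimal. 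The first step is to observe that $\Qout(\mu) \supset \Qin(\mu)$ automatically (since $[\mu^G_p]$ is supported on a subgroup containing $P$ and every $G^\xi$ with $\mu$ being $G^\xi$-invariant), and that to prove equality it suffices to show $\mu$ is $G^{\xi_0}$-invariant for each such ``boundary'' coarse root $\xi_0$, because the $G^\xi$ for $\lieg^\xi \not\subset \lieq_{\mathrm{In}}$ together with $\Qin$ generate a parabolic strictly larger than $\Qin$, hence all of $G$ by maximality, contradicting $\Qin \subsetneq \Qout$ unless there are none.

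The core step is to apply Theorem \ref{thm:HE} to the contracting subalgebra $\lieh = \bigoplus_{\xi(s)<0}\lieg^\xi$ for a suitable $s \in A$ (chosen generic so that $\xi_0(s) < 0$). This produces, for a.e.\ $p$, subgroups $H_p \subset S_p \subset H$ with the listed properties; by $A$-equivariance and $A$-ergodicity of the relevant data (combined with Claim \ref{claim:llmm}, which gives that $P$-ergodicity makes the Lyapunov data constant), we may treat $S_p, H_p$ as essentially constant. Now the key algebraic input: because $\Qin$ is maximal and $\xi_0$ is the minimal boundary coarse root, $\lieg^{\xi_0}$ is generated inside $\lieg$ by brackets of root spaces $\lieg^\eta$ with $\lieg^\eta \subset \lieq_{\mathrm{In}}$ — in the Dynkin-diagram picture, one builds the root $\delta$ with $\langle \delta, \alpha_j^\vee\rangle = 1$ (the ``boundary'' root with coefficient $1$ on $\alpha_j$) as a sum of simple roots, bracketing stepwise; each intermediate root has $\alpha_j$-coefficient $0$ and so its root space lies in $\lieq_{\mathrm{In}}$. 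The subgroups $G^\eta$ for those $\eta$ preserve $\mu$ (as $\lieg^\eta \subset \lieq_{\mathrm{In}}$), so by the commuting-cosets property (\ref{4}) of Theorem \ref{thm:HE} combined with the fact that $S_p$ contains all the ``already invariant'' directions, one deduces that $S_p \cap G^{\xi_0}$ is nontrivial; indeed it must be a nontrivial $A$-invariant connected subgroup of $G^{\xi_0}$ on which $[\mu^{G^{\xi_0}}_p]$ is Haar (by properties (1), (2), (5) of Theorem \ref{thm:HE}). Then Lemma \ref{prop:aaalll}, applied with $V(x) = S_x \cap G^{\xi_0}$ (measurable, $A$-invariant, nontrivial), upgrades this to full $G^{\xi_0}$-invariance of $\mu$, using that $\mu$ is $P'$-invariant and the $\mathfrak{sl}(2,\R)$-triple argument.

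Finishing: having shown $\mu$ is $G^{\xi_0}$-invariant for every coarse root $\xi_0$ with $\lieg^{\xi_0} \not\subset \lieq_{\mathrm{In}}$, the group generated by $\Qin(\mu)$ and all such $G^{\xi_0}$ is a subgroup under which $\mu$ is invariant and which strictly contains $\Qin(\mu)$ unless there are no such $\xi_0$; by maximality this larger group would be all of $G$, contradicting maximality of $\Qin(\mu)$ as a \emph{proper} parabolic — hence there are no boundary roots outside $\Qin$ in $\Qout$, i.e.\ $\Qout(\mu) = \Qin(\mu)$. I expect the main obstacle to be the algebraic/combinatorial step: verifying that the minimal boundary coarse root group is reached from inside $\lieq_{\mathrm{In}}$ by a chain of brackets staying in $\lieq_{\mathrm{In}}$, and correctly extracting from property (\ref{4}) of the High Entropy Theorem that $S_p \cap G^{\xi_0}$ is nontrivial rather than the cosets merely commuting — this requires care about what $S_p$ must contain, which in turn uses that $\mu$ restricted to those already-invariant root directions has full (Haar) conditional measure and hence those directions lie in $H_p \subset S_p$. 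The reduction to a single simple factor and the ``no rank-$1$ ideals'' hypothesis are used precisely to guarantee the bracket chain exists and to rule out degenerate $BC$-type or rank-$1$ pathologies.
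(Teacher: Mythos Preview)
Your proposal has a genuine gap at its algebraic core. You assert that $\lieg^{\xi_0}$ is ``generated inside $\lieg$ by brackets of root spaces $\lieg^\eta$ with $\lieg^\eta \subset \lieq_{\mathrm{In}}$,'' with ``each intermediate root [having] $\alpha_j$-coefficient $0$.'' This is impossible: $\lieq_{\mathrm{In}}$ is a Lie subalgebra, hence closed under brackets, so nothing built from root spaces inside it can land outside it; a chain of roots with $\alpha_j$-coefficient $0$ never sums to one with nonzero $\alpha_j$-coefficient. Consequently property~\ref{4} of Theorem~\ref{thm:HE}, applied only to directions already in $\lieq_{\mathrm{In}}$, yields nothing about $G^{\xi_0}$. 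You also conflate $S_p$ and $H_p$: the High Entropy Theorem gives Haar-invariance of conditionals under $H_p$ (properties~(2) and~(5)), while $S_p$ only records the support, so ``$S_p\cap G^{\xi_0}$ nontrivial $\Rightarrow$ $[\mu^{G^{\xi_0}}_p]$ is Haar there'' is unjustified. Finally, you never invoke Claim~\ref{claim:nontrivialdirection}, which is what supplies a direction $\xi$ outside $\lieq_{\mathrm{In}}$ along which $\mu^{G^\xi}_p$ is genuinely non-atomic --- the only input that puts something outside $\lieq_{\mathrm{In}}$ into $S_p$ to begin with.

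The paper's argument repairs exactly this. It does \emph{not} try to show invariance along the $\xi$ furnished by Claim~\ref{claim:nontrivialdirection}; instead it brackets $\xi$ against a root \emph{inside} $\lieq_{\mathrm{In}}$ to land on a \emph{third} root outside. Concretely, with $\beta\in\xi$, a short combinatorial claim (this is where maximality of $\Qin(\mu)$ and the no-rank-$1$-ideals hypothesis enter) locates $\gamma\in\Sigma_{\Qin(\mu)}$, not negatively proportional to $\beta$, with $\beta+\gamma\in\Sigma$ and $\beta+\gamma\in\Sigma^\perp_{\Qin(\mu)}$. Apply Theorem~\ref{thm:HE} to the contracting algebra generated by $\lieg^\xi\oplus\lieg^{[\gamma]}$: since $\mu$ is $G^{[\gamma]}$-invariant one has $G^{[\gamma]}\subset S_p$, and since $\mu^{G^\xi}_p$ is non-atomic one has $S_p\cap G^\beta$ nontrivial; these two pieces of $S_p$ do not commute (because $\beta+\gamma$ is a root), so property~\ref{4} forces $H_p$ to meet $G^{[\beta+\gamma]}$ nontrivially. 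Lemma~\ref{prop:aaalll} then upgrades this to full $G^{[\beta+\gamma]}$-invariance of $\mu$, contradicting $\beta+\gamma\in\Sigma^\perp_{\Qin(\mu)}$.
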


\subsection{Proofs of Theorems \ref{thm:measconjfin} and \ref{thm:measconj}}
Given a $P$-invariant, $P$-ergodic measure $\mu$ as above, let $\td \mu$ denote the locally finite measure on $G\times M$ obtained from lifting $\mu$ on   fundamental domains of $\Gamma$.  Given $g\in G$, let $  \mu_g$ denote the conditional probability measure on $M$ defined by disintegrating $\td \mu$ along fibers and  identifying the fiber $\{g\}\times M$ with $M$.    

As $\td \mu$ lifts $\mu$, we have that $\{\mu_g:g\in G\} $ is $\Gamma$-equivariant:
$$\mu_{g\gamma} = \alpha(\gamma)_*\mu_g.$$  Moreover, as $\mu$ is $\Qin(\mu)$-invariant, 
for almost every $g\in G$, we have that $\mu_{g} = \mu_{qg}$ for every $q\in \Qin(\mu)$.  Let $Q= \Qin(\mu)$.  
We equip $Q\bs G$ with any measure $m$ in the Lebesuge class.  
Let $\bar \mu$ be the measure on $Q\bs G\times M$ given by 
$$ \bar \mu(B) = \int \mu_g(\{x:  (Qg,x)\in B\}) \ d m(Qg)$$
and let 
 $\hat \mu$ be the measure on $ M$ given by $$\hat \mu(B) = \int \mu _g(B) \ d m(Qg).$$
 Note that $\hat \mu$ is image of $\bar \mu$ under the natural projection $\pi\colon Q\bs G\times M \to M$.  

Consider the $\bar \mu$-measurable partition $ \zeta^\pi$ on $Q\bs G \times M$ into level sets of the map $\pi$.  We have that $ \zeta^\pi$ is measurably equivalent to the partition $\{\Qin(\mu)\bs\Qout (\mu)\times \{x\} : x\in M\}$.  In particular, in the case $\Qin(\mu)= \Qout(\mu)$  the following claim follows immediately.  

\begin{claim}\label{prop:msblisom}
If  $\Qin(\mu)= \Qout(\mu)$ then the projection $(Q\bs G\times M, \bar \mu) \to (M, \hat \mu)$ is a measurable isomorphism.  
\end{claim}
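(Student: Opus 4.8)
The plan is to reduce the claim to the structural description of $\zeta^\pi$ recorded immediately above it, and then to observe that a measure-preserving Borel map between standard probability spaces whose fibres are, modulo the measure, singletons is an isomorphism. Concretely, let $\{\bar\mu_x : x\in M\}$ be the family of conditional probability measures on $Q\bs G$ obtained by disintegrating $\bar\mu$ along $\pi$, so that $\bar\mu=\int_M\bar\mu_x\times\delta_x\,d\hat\mu(x)$ and, by construction, $\pi_*\bar\mu=\hat\mu$. It suffices to show that when $\Qin(\mu)=\Qout(\mu)$ one has $\bar\mu_x=\delta_{\sigma(x)}$ for a measurable map $\sigma\colon M\to Q\bs G$ and $\hat\mu$-a.e.\ $x$: for then $x\mapsto(\sigma(x),x)$ is a measurable two-sided inverse of $\pi$ off a $\hat\mu$-null set, and $\pi$ is the asserted measurable isomorphism.

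To identify $\bar\mu_x$ I would pass through the conditional measures $[\mu^G_p]$ of $\mu$ along $G$-orbits in $M^\alpha$. Unwinding the suspension and the construction of the fibre measures $\mu_g$ from the lift $\td\mu$ on $G\times M$, one sees that a coset $Qg$ lies in the support of $\bar\mu_x$ exactly when the $G$-orbit conditional of $\mu$ through a point of $M^\alpha$ lying over $(g,x)$ charges the direction $Qg$. Now $\Qout(\mu)$ is by definition the smallest closed $[\mu^G_p]$-conull subgroup of $G$, while $\Qin(\mu)$-invariance of $\mu$ makes $[\mu^G_p]$ left-invariant under $\Qin(\mu)$ (Claim~\ref{claim:kklloopp}); hence the support of $\bar\mu_x$ is a single coset of $\Qin(\mu)\bs\Qout(\mu)$ inside $Q\bs G=\Qin(\mu)\bs G$. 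This is exactly the equivalence of $\zeta^\pi$ with the partition $\{\Qin(\mu)\bs\Qout(\mu)\times\{x\}:x\in M\}$ recorded in the text, which I would invoke directly. Since the hypothesis $\Qin(\mu)=\Qout(\mu)$ forces that coset to be a single point, each $\bar\mu_x$ is a point mass and the reduction of the first paragraph applies.

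The only delicate point in this scheme is the bookkeeping in the second paragraph: making the passage between the disintegration of $\bar\mu$ over $(M,\hat\mu)$ and the orbit conditionals $[\mu^G_p]$ of $\mu$ precise requires care with the (possibly non-compact) fundamental domain used to build $\td\mu$, with the fact that $m$ is only prescribed up to its Lebesgue class---so that all identities should be read up to normalization and modulo null sets---and with the measurable dependence of the section $\sigma$ on $x$. Everything else is routine: that a measure-preserving Borel map between standard probability spaces with conull singleton fibres is a measurable isomorphism is classical, which is why the claim is stated as following immediately from the preceding observation.
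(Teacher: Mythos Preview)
Your proposal is correct and follows the paper's own line: the paper does not give a separate proof of the claim at all, merely stating that it ``follows immediately'' from the observation that $\zeta^\pi$ is measurably equivalent to the partition $\{\Qin(\mu)\bs\Qout(\mu)\times\{x\}:x\in M\}$. Your write-up simply makes this explicit---disintegrating $\bar\mu$ over $\pi$, reading off from that structural description that the conditionals $\bar\mu_x$ are supported on single $\Qin(\mu)\bs\Qout(\mu)$-cosets (hence point masses under the hypothesis), and invoking the standard fact that a measure-preserving Borel map with a.e.\ singleton fibres is an isomorphism---so there is nothing to compare.
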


Theorems \ref{thm:measconjfin} and \ref{thm:measconj} follow from   $\Gamma$-equivariance of the family $\{\mu_g\}$ and   Claim \ref{prop:msblisom}.

\begin{proof}[Proof of Theorems \ref{thm:measconjfin} and \ref{thm:measconj}]
Let $\mu$ be a $P$-invariant, $P$-ergodic measure on $M^\alpha$.  

First consider the  setting of Theorem \ref{thm:measconjfin} where $\dim(M)= r(G)$.  If there exists a non-resonant, fiberwise Lyapunov exponent $\lambda^F_i$ for $\mu$ then,  by  dimension counting,   there are at most $r(G)-1$ coarse resonant roots $\xi \in \hat \Sigma$.  However, as $\mu$ is $P$-invariant and as  there are no proper parabolic subalgebras of resonant codimension smaller that $r(G)$, it follows that $\mu$ is necessarily $G$-invariant.  It then follows that if $\td \alpha$ has no invariant probability measure on $M^\alpha$, then every fiberwise Lyapunov exponent of $\mu$ is resonant with a root of $\lieg$.  We claim in this case that every fiberwise exponent  for $\mu$ is in fact resonant with a negative root $\beta\in \Sigma_-$.  Indeed, if there existed  a fiberwise exponent that was resonant with a positive root, then would be at most $r(G)-1$ resonant negative roots.  As we assume $\mu$ is $P$-invariant we again generate a parabolic subgroup which preserves $\mu$ and with resonant codimension smaller than $r(G)$.  This again implies the existence of an
 $\td \alpha$ invariant probability on $M^\alpha$.   

Thus, in the case  that $\dim(M)= r(G)$ it follows that   if there is no $\alpha$-invariant measure on $M$ then there exists   $s\in A$ such  that $\lambda^F_i(s)<0$ for every fiberwise Lyapunov exponent $\lambda_i^F$ of $\mu$.  Proposition \ref{thm:resonant} then holds and a standard argument shows in this case that the fiberwise conditional measures $\mu_g$ are supported on a finite set for almost every $g$.    By ergodicity, the number of atoms is constant \as

In the case that $\dim(M) \le m(G)$ and every fiberwise Lyapunov exponent is resonant with a negative root, the same analysis as above holds.  In particular the hypotheses of Proposition \ref{thm:resonant} hold.   Note that this holds even if $\lieg$ has rank-1 simple ideals (so $m(G) = 1$  and $M$ is a circle.)
If $\dim(M) \le m(G)$ and not every fiberwise Lyapunov exponent is resonant with a negative root, then there are at most $m(G)-1$ resonant, negative coarse  restricted roots.   Note if $\lieg$ has rank-1 simple ideals then, as $m(G)-1 = 0$, this implies $\Qin(\mu) = \Qout(\mu) = G$.  
Thus we may assume $\lieg$ has no rank-1 simple ideals.  
From the definition of $m(G)$,   it follows that either $\Qin(\mu)= G$ or that $\Qin(\mu)$ is a maximal parabolic subgroup and, from     Proposition \ref{thm:High entropy}, we have that $\Qin(\mu) = \Qout(\mu)$.

In particular, under the hypotheses of  either Theorem  \ref{thm:measconjfin} or \ref{thm:measconj},
we have $Q:= \Qin(\mu) = \Qout(\mu)$.  

 
 In the setting of either theorem, the spaces $(M, \mu_g)$ are Lebesgue probability spaces.  As there are at most countably many isomorphism types of Lebesgue probability spaces, by $P$-ergodicity it follows that the spaces $(M, \mu_g)$ are all measurably isomorphic to a fixed abstract Lebesgue probability space $(Y, \eta)$.  In particular,  there exists a measurable family of measurable isomorphisms $\phi_g\colon (M,\mu_g) \to (Y, \eta)$.  Moreover the family of isomorphism $\phi_g$ is $Q$-invariant.  The family of isomorphisms $\phi_g$ translate the $\Gamma$-equivariance of the family $\mu_g$   to a family of automorphisms of the measure space $(Y,\eta)$ parameterized by $Q\bs G$:
$$\psi(\gamma, Qg) :=\phi_{g\gamma}\circ \alpha(\gamma)_* \circ \phi_g\inv \in \aut(Y,\eta).$$
One verifies that $\psi$ is a cocycle over the right $\Gamma$-action on $Q\bs G$.  

It now follow from 
Claim \ref{prop:msblisom} that $(M,\hat \mu)$ is measurably isomorphic to $(Q\bs G, \nu)\times (Y, \eta)$.  Moreover,   the action $\alpha$ of  $\Gamma$ on $(M, \hat \mu)$ is measurably conjugate via this isomorphism to the skew action defined by $\psi$ over the standard right  action of  $\Gamma$ on $Q\bs G$.  
Theorems \ref{thm:measconjfin} and \ref{thm:measconj} now follow.
\end{proof}

\section{Proof of Propositions \ref{thm:resonant} and \ref{thm:High entropy}}
We recall the notation of Section \ref{sec:QinQout}.  In particular, we take $\mu$ to be a $P$-invariant, $P$-ergodic measure on $M^\alpha$.   
Recall also the definitions of  $\Qin(\mu)$ and $\Qout(\mu)$   in Section \ref{sec:QinQout}.  We verify under the hypotheses of 
Propositions \ref{thm:resonant} and \ref{thm:High entropy} that   $\Qin(\mu)=\Qout(\mu)$.

\subsection{Conditional measures along coarse root spaces}
Under the assumption that $\Qin(\mu)\neq\Qout(\mu)$ the following claim guarantees the existence of a coarse restricted root $\xi\in \hat \Sigma$ with $G^\xi \not\subset \Qin (\mu)$ and such that the measure $[\mu^{G^\xi}_p]$ is non-trivial. Write $Q = \Qin(\mu)$ and  $\lieq = \Lie(Q)$ for the remainder.  

\begin{claim}\label{claim:nontrivialdirection}
Suppose $\Qin(\mu)\neq\Qout(\mu)$.    Then there is a coarse  restricted root $\xi \in\hat  \Sigma$ with $\lieg^\xi \not\subset \lieq$  and such that $\mu^{G^\xi}_p $ is non-atomic for $\mu$-\ae $p\in M^\alpha$.    
\end{claim}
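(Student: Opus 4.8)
The plan is to exploit the parabolic structure of $\Qout(\mu)$ together with the characterization of the conditional measures $[\mu^V_p]$ in terms of disintegrations. First I would recall that both $\Qin(\mu) = Q$ and $\Qout(\mu)$ are standard parabolic subgroups containing $P$, so they correspond to subsets of the simple roots; since $Q \subsetneq \Qout(\mu)$, the Lie algebra $\Lie(\Qout(\mu))$ strictly contains $\lieq$. Because both are parabolic and hence built from whole coarse root spaces together with $\liem \oplus \liea$, the difference $\Lie(\Qout(\mu)) \ominus \lieq$ is a sum of coarse root spaces $\lieg^\xi$; at least one such $\xi$ must satisfy $\lieg^\xi \subset \Lie(\Qout(\mu))$ but $\lieg^\xi \not\subset \lieq$. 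Among all coarse roots $\xi$ with $\lieg^\xi \not\subset \lieq$ but $\lieg^\xi \subset \Lie(\Qout(\mu))$, I would choose one that is \emph{minimal} in the standard partial order on roots (or, more robustly, choose $\xi$ so that $-\xi$ is as ``high'' as possible, i.e.\ $G^\xi$ lies ``closest'' to $Q$); the point of such a minimality choice is that the coarse root subgroup $G^\xi$ should then essentially be a ``first step'' out of $Q$ inside $\Qout(\mu)$, so that nontriviality of $[\mu^{G}_p]$ transverse to $Q$ forces nontriviality of $[\mu^{G^\xi}_p]$.

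The key step is then to argue that $\mu^{G^\xi}_p$ is non-atomic almost surely. Suppose for contradiction that $\mu^{G^\xi}_p$ is atomic (equivalently, a Dirac mass at $e$, since by Claim \ref{claim:kklloopp}-type normalization its support contains $e$ and by $A$-equivariance and $P$-ergodicity the ``size'' of the support is constant) for $\mu$-a.e.\ $p$. I would show this propagates: using the adjoint action of $A$ and $\liem$, together with the fact that $G^\xi$ normalizes the unipotent radical pieces appropriately, I would derive that the conditional measures along all coarse root subgroups $G^{\xi'}$ with $\lieg^{\xi'} \subset \Lie(\Qout(\mu))$, $\lieg^{\xi'} \not\subset \lieq$ are also atomic. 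This uses the structure of $\Qout(\mu)$ as a parabolic: the group generated by $Q$ and those $G^{\xi'}$ for which $[\mu^{G^{\xi'}}_p]$ is nontrivial is a closed subgroup (being generated by $P$ and unipotent one-parameter subgroups along whose orbits $\mu$ has nontrivial conditionals), and by the product/Fubini structure of conditional measures along a filtration of the unipotent radical of $\Qout(\mu)$ by normal subgroups, if every $[\mu^{G^{\xi'}}_p]$ were trivial then $[\mu^G_p]$ would be supported on $Q$, contradicting the definition of $\Qout(\mu)$ as the \emph{smallest} $[\mu^G_p]$-co-null subgroup together with $\Qout(\mu) \neq Q$.

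Concretely, I would set up the argument as follows: fix $s \in A$ with $\beta(s) < 0$ for all $\beta \in \Sigma_+$, so that the unipotent radical $\lien_{\Qout}$ of $\Qout(\mu)$ is contracted by $s$, and choose a filtration $\{0\} = \lien_0 \subset \lien_1 \subset \dots \subset \lien_k$ of $\lien_{\Qout} \cap (\text{pieces not in }\lieq)$ by $\mathrm{Ad}(A)$-invariant subalgebras, each normal in the next, with one-dimensional-in-coarse-roots successive quotients. A standard Fubini/disintegration argument (as in the construction of the $\mu^V_p$ and in Claim \ref{claim:kklloopp}) shows that if the conditional measure along each successive quotient group is a point mass then $[\mu^{N_{\Qout}}_p]$ is a point mass, hence $[\mu^G_p]$ is supported on $Q$, contradicting $\Qout(\mu) \supsetneq Q$ by minimality of $\Qout(\mu)$. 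Therefore some successive-quotient conditional is nontrivial; pulling back and using that these quotient groups are (modulo $Q$) themselves coarse root groups $G^{\xi}$, and using $A$-equivariance and $P$-ergodicity to upgrade ``nontrivial on a positive measure set'' to ``non-atomic a.e.'', produces the desired $\xi$ with $\lieg^\xi \not\subset \lieq$ and $\mu^{G^\xi}_p$ non-atomic a.e.

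The main obstacle I anticipate is the bookkeeping in that last step: the successive quotient $\lien_i / \lien_{i-1}$ is a coarse root space only after quotienting by $\lieq$, so one must be careful that ``nontrivial conditional along $N_i/N_{i-1}$'' genuinely translates into ``$\mu^{G^\xi}_p$ non-atomic'' rather than merely ``$\mu^{G^\xi}_p$ not contained in $Q \cap G^\xi$,'' and one must rule out the possibility that the conditional along $G^\xi$ is atomic while the nontriviality is ``hidden'' in how $G^\xi$ interacts with $Q$. This should be handled by choosing $\xi$ minimal, so that $G^\xi \cap Q$ is as small as possible (ideally trivial), making $G^\xi$ transverse to $Q$; then non-triviality of the conditional is the same as non-atomicity. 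The $A$-invariance of the assignment and $P$-ergodicity then give the a.e.\ statement uniformly.
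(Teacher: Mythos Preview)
Your approach is in the right spirit but differs from the paper's and has a real gap at the ``Fubini'' step.  The paper does acknowledge that the claim follows from the \emph{local product structure} of $A$-invariant measures on $G$-spaces (Einsiedler--Katok), which is essentially what your filtration argument would need; however, the paper then gives a different, self-contained argument.  Rather than filtering and invoking a product structure, the paper works directly with the full complement $V=\exp\bigl(\bigoplus_{\beta\in\Sigma_Q^\perp}\lieg^\beta\bigr)$, chooses an $s_0\in A$ that centralizes one coarse root group $G^\xi\subset V$ while strictly contracting the remaining part $V'$, and uses Poincar\'e recurrence under $s_0$ together with Lusin's theorem to show that $\mu^{G^\xi}_p$ is constant along $V'$-orbits.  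Thus if $\mu^V_p$ is not supported on a single $V'$-orbit, the measure $\mu^{G^\xi}_p$ is non-atomic on a positive measure set, and $P$-ergodicity upgrades this to almost everywhere; if it \emph{is} supported on a single $V'$-orbit, one replaces $V$ by $V'$ and iterates.  This recurrence argument is in fact (a piece of) how the product structure itself is proved.

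The gap in your argument is the sentence ``a standard Fubini/disintegration argument \dots shows that if the conditional measure along each successive quotient group is a point mass then $[\mu^{N_{\Qout}}_p]$ is a point mass.''  This is not Fubini: for a general Borel measure there is no reason the conditional along $V$ should decompose as an iterated conditional along a filtration of $V$ by normal subgroups.  The statement is true for $A$-invariant measures, but precisely because of the product structure theorem, whose proof uses the same recurrence idea the paper employs.  So as written your argument is circular unless you cite that theorem explicitly.  Your ``minimality'' choice of $\xi$ is also not doing the work you want---the obstacle you identify at the end (that nontriviality of the quotient conditional need not yield nontriviality of $\mu^{G^\xi}_p$) is exactly the point where the recurrence/product-structure input is required, and minimality alone does not resolve it.
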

The claim follows from the local product structure of $A$-invariant measures on $G$-spaces demonstrated in \cite[Proposition 8.3]{MR1989231} and further  developed in \cite[Theorems 7.5, 8.4]{MR2191228}.  We sketch a short proof here for completeness.

Given standard parabolic subgroup $Q$ with Lie algebra $\lieq$,   let $$\Sigma_Q= \{ \beta \in \Sigma : \lieg ^\beta\subset \lieq\}, \quad \Sigma ^\perp _Q= \{ \beta \in \Sigma : \lieg ^\beta\not\subset \lieq\}.$$
 We have $\Sigma = \Sigma _Q \cup \Sigma _Q^\perp$ and $\Sigma_Q$ and $\Sigma_Q^\perp$ are saturated by coarse equivalence classes of restricted roots.   

\begin{proof}
Recall we write $Q= \Qin(\mu)$ and the measure $\mu^G_p$ is a $Q$- and hence $A$-invariant measure on $G$.   
Let $\lieg^\perp:= \bigoplus_{\beta\in \Sigma^\perp_Q}$.   Note that $\Sigma^\perp_Q$ consists  of negative roots.  
Let $V$ be the   analytic subgroup corresponding to $\lieg^\perp$.  
Let $C_s$ denote conjugation by $s\in A$.  We have $C_s(V) = V$ for $s\in A$.    As $\mu$ is $A$-invariant, we have for $s\in A$  that $[(C_s)_*\mu^V_{p}]= [\mu^V_{\td \alpha (s)(p)}]$ 
for almost every $p$.  

As $\Sigma^\perp_Q \subset \Sigma_-$, we may find an $s_0\in A$ and a coarse restricted root $\xi\subset \Sigma^\perp$ with 
\begin{itemize}
\item $\beta(s_0) = 0$ for $\beta\in \xi$;
\item $\beta(s_0) <0$ for all $\beta\in \Sigma^\perp_Q \sm \xi$.
\end{itemize}
Let $V'$ 
be the analytic subgroups of $V$ corresponding  to $\bigoplus_{\beta\in \Sigma^\perp_Q\sm \xi}\lieg^\beta$.  

  Suppose  first that  $\mu^V_p$ is not supported on a single $V'$-orbit for a positive measure of set of $p\in M^\alpha$.  
As $\td \alpha(s_0)$ acts as the identity   on $G^\xi$-orbits, we have $\mu^{G^\xi}_{\td \alpha (s_0)(p)}= \mu^{G^\xi}_{p}$ for almost every $p$.  Moreover, as $\td \alpha(s_0) $ contracts $V'$ orbits, it follows from Poincar\'e recurrence and Lusin's theorem that    $\mu^{G^\xi}_{p} =  \mu^{G^\xi}_{\td \alpha (v) (p)}$ for $\mu^{V'}_p$-\ae $v\in V'$ and $\mu$-\ae $p$.  
Thus, $\mu^{G^\xi}_p$ is atomic on a positive measure set of $p$ only if    $\mu^V_p$ is  supported on a single $V'$-orbit for a positive measure of set of $p\in M^\alpha$.   Thus, $\mu^{G^\xi}_p$ is non-atomic on a positive measure set of $p$.  
Note that the actions by $A$ and $M$ preserve the  coarse root subgroups $G^\xi$ and also preserve  the measure $\mu$. Also, as the $A$-ergodic components of $\mu$ are $N$-invariant, it follows from $P$-ergodicity of $\mu$ that $\mu^{G^\xi}_p$ is non-atomic for almost every $p$.

If $\mu^V_p$ is supported on a single $V'$-orbit for almost every $p\in M^\alpha$, we may recursively repeat the above argument with $V$ replaced $V'$.  
\end{proof}

\subsection{Recurrence and the proof of  Proposition  \ref{thm:resonant}} \label{sec:thisone}
\def\E {\mathcal E}

We show under the assumption that every fiberwise Lyapunov exponent is resonant with a negative root, that $\Qin(\mu) = \Qout(\mu)$.  Suppose that $\Qin(\mu) \neq \Qout(\mu)$ and let $\xi$ be a coarse restricted root as in Claim \ref{claim:nontrivialdirection}.  We will show below that $\mu$ is $G^\xi$-invariant.  The contradiction completes the proof of Proposition \ref{thm:resonant}.

Recall that $G = C\times G'$ 
 where $C$ is the maximal compact factor.  
  Let $( M^\alpha)'$ denote the quotient of $M^\alpha$ under the action of $\td \alpha(C)$.  Note that  $( M^\alpha)'$ might have an orbifold structure but still has the structure of a fiber-bundle (with orbifold fibers) over $ G'/\Gamma =C\bs G/\Gamma$. 
We have $C\subset M \subset P$.  As $C$ is contained in the centralizer of both $A$ and $N$,  the actions of $A$ and $N$ on $G/\Gamma$ and $M^\alpha$ descend to actions of $A$ on $C\bs G/\Gamma$ and $( M^\alpha)'$.  

Let $A'\subset A$ denote the kernel of $\xi$; that is, $s\in A'$ if $\beta(s)=0$ for all $\beta\in \xi$.  
As we assume $\Gamma$ has dense image in every rank-$1$ almost-simple subgroup of $G$, it follows from   Moore's ergodicity theorem   (applied to each irreducible factor)  that $A'$ acts ergodically on $ G'/\Gamma$ (see for example \cite[Theorem 2.2.6]{MR776417}).

Let $ \mu'$ denote the projection of the measure $\mu$ onto $   (M^\alpha)'$ and let $\{  \mu_{ g'}'\colon  g '\Gamma  \in   G'/\Gamma\}$ denote a family of conditional measures induced by the partition of $(M^\alpha)'$ into its fibers over $G'/\Gamma$.  As discussed in the proof of Theorem \ref{thm:measconjfin}, the assumption that every fiberwise Lyapunov exponent is resonant with a negative root combined with the  $C$-invariance of $\mu$ implies that $ \mu'_{ g'}$ has  finite support for almost every $ g'\Gamma \in G'/\Gamma$.  Moreover, $P$-ergodicity of $\mu$ ensures that the number of atoms is constant in $ g'\Gamma$.

Note that (as we assume  $\mu^{G^\xi}_p$ is non-atomic) the partition   of $(( M^\alpha)',  \mu')$ into full $G^\xi$-orbits is non-measurable.  
Let $ \eta^\xi$ denote the measurable hull of this partition; that is the finest measurable partition of $(( M^\alpha)',   \mu')$ containing full $G^\xi$-orbits.

Consider the action of $A'$ on $(( M^\alpha)',  \mu')$.   Note that the action   need not be ergodic.  Let $ \E_{A'}$ denote the partition into ergodic components  of $ \mu'$ with respect to the action of $A'$. We have the following claim which will provide the  necessary recurrence to complete the  proof of Proposition \ref{thm:resonant}.

\begin{claim}\label{pi-part}
The partition $ \eta^\xi$ refines $\E_{A'}$.  
\end{claim}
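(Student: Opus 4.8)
The plan is to combine the finiteness of the fibers of $\mu'$ over $G'/\Gamma$ with Moore ergodicity to show that the $A'$-ergodic decomposition $\E_{A'}$ is a \emph{finite} partition, and then to use that $G^\xi$ centralizes $A'$ and is connected in order to force the atoms of $\eta^\xi$ to lie inside single $A'$-ergodic components. Note $\xi\subset\Sigma_-$ and, since $\operatorname{rank}G\ge 2$, the kernel $A'=\ker\xi$ is nontrivial. Recall also that proving ``$\eta^\xi$ refines $\E_{A'}$'' is the same as proving that every $A'$-invariant measurable set is $\eta^\xi$-measurable, i.e.\ is, mod $\mu'$, a union of $G^\xi$-orbits.

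\emph{Step 1: $\E_{A'}$ has finitely many atoms.} Let $F\subset (M^\alpha)'$ be the union over $g'\Gamma\in G'/\Gamma$ of the supports of the fiberwise conditional measures $\mu'_{g'}$. By the standing assumption each $\mu'_{g'}$ has the same finite number $k$ of atoms, so, after a measurable selection of those atoms, $F$ is a Borel set with $\mu'(F)=1$ which is at most $k$-to-one over $G'/\Gamma$. Since the $A$-action on $(M^\alpha)'$ covers the $A$-action on $G'/\Gamma$, which preserves the Haar measure, the conditional measures transform by $\td\alpha(s)_*\mu'_{g'}=\mu'_{sg'}$ for $s\in A$; hence $F$ is $A$-invariant, in particular $A'$-invariant. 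Because $\Gamma$ has dense image in every rank-one almost-simple factor of $G$, Moore's ergodicity theorem (applied factorwise, as for the ergodicity of $A'$ on $G'/\Gamma$ already used) shows $A'$ acts ergodically on $G'/\Gamma$. A measure-preserving extension of an ergodic system whose factor map is essentially $k$-to-one has at most $k$ ergodic components (in the ergodic decomposition of the total space every component projects to the ergodic base measure, so the multiplicities add up to $k$); applying this to $(F,\mu'|_F)\to (G'/\Gamma,\mathrm{Haar})$ yields finitely many $A'$-ergodic components $F_1,\dots,F_m$.

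\emph{Step 2: the atoms of $\eta^\xi$ lie in single $F_i$.} Since $A$ preserves $\mu'$ and normalizes the coarse root group $G^\xi$ --- indeed $A'=\ker\xi$ centralizes $G^\xi$ --- the $A$-action carries $G^\xi$-orbits to $G^\xi$-orbits, hence preserves the partition into $G^\xi$-orbits and therefore its measurable hull $\eta^\xi$; in particular $\eta^\xi$ is $A'$-invariant. Let $\phi=\sum_{i} i\,\mathbbm{1}_{F_i}$ be the $A'$-invariant ergodic-component label on $F$. For fixed $g\in G^\xi$ the map $q\mapsto \phi(gq)$ is again $A'$-invariant, because $g$ commutes with $A'$: for $s\in A'$ one has $\phi(g(sq))=\phi(s(gq))=\phi(gq)$. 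Hence $q\mapsto\phi(gq)$ is $\E_{A'}$-measurable, i.e.\ constant on each $F_i$ with some value $c_i(g)\in\{1,\dots,m\}$; this says $g\cdot F_i\subset F_{c_i(g)}$, interpreted along the conditional measures $\mu^{G^\xi}_p$, which are carried by $F$ for $\mu$-a.e.\ $p$ and $\mu^{G^\xi}_p$-a.e.\ point of the orbit. The assignment $g\mapsto(i\mapsto c_i(g))$ is then a measurable homomorphism from the connected group $G^\xi$ into the finite group $S_m$, hence trivial (its kernel is a measurable subgroup of finite index, so by Steinhaus it is open, so it is all of $G^\xi$). Therefore each $F_i$ is $G^\xi$-invariant mod $\mu'$, so every $A'$-invariant set, being a union of the $F_i$, is $\eta^\xi$-measurable; that is, $\eta^\xi$ refines $\E_{A'}$.

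The main obstacle I expect is the bookkeeping in Step 1: establishing that $F$ is a genuine Borel, $A$-invariant set with conditionals transforming as claimed, and invoking the finiteness of the ergodic decomposition of a bounded-to-one extension of an ergodic system. A secondary subtlety is that $G^\xi$ need not preserve $\mu'$ or even its measure class, so the relations ``$g\cdot F_i\subset F_{c_i(g)}$'' and the $A'$-invariance of $q\mapsto\phi(gq)$ cannot be read as statements about an honest measurable $G^\xi$-action and must instead be phrased through the conditional measures $\mu^{G^\xi}_p$ and the hull $\eta^\xi$; once this is set up correctly, the connectedness argument closing Step 2 is immediate.
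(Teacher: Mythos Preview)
Your Step~1 is correct and essentially parallels the finiteness argument in the paper (the paper establishes finiteness of the $A'$-ergodic decomposition inside each $A$-ergodic component, while you get it globally; either works).

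Step~2, however, has a genuine gap. The construction of a homomorphism $G^\xi\to S_m$ requires that $G^\xi$ act on the measure algebra of $((M^\alpha)',\mu')$, and for this $G^\xi$ must at least preserve the $\mu'$-null sets. You correctly flag that $G^\xi$ need not preserve $\mu'$ or its class, but the proposed workaround through the leafwise measures does not close the gap. Concretely: to obtain the homomorphism relation $c_i(gh)=c_{c_i(h)}(g)$ you need, for $\mu'$-a.e.\ $q\in F_i$, that $hq$ is again a $\mu'$-typical point so that $\phi(g(hq))=c_{\phi(hq)}(g)$ is meaningful; but since $h\in G^\xi$ does not preserve the $\mu'$-class, the set $h\cdot F_i$ can be $\mu'$-null, and the composition law collapses. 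Passing to the leafwise measures $\mu^{G^\xi}_p$ lets you make sense of $\phi(v\cdot p)$ for $\mu^{G^\xi}_p$-a.e.\ $v$, but these measures vary with $p$ (they are only constant along $A'$-orbits, not on all of $F_i$), so one cannot extract a single well-defined map $g\mapsto c_i(g)$ from them, and in particular the connectedness argument has nothing to act on. In short, the permutation picture is only available for a group that preserves $\mu'$.

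The paper's proof sidesteps this by swapping the roles of the two groups. First it shows $\eta^\xi$ refines $\mathcal E_A$ directly, using that there is $s\in A$ with $\xi(s)<0$, so $G^\xi$-orbits are contracted by $\tilde\alpha(s)$ and hence (by the pointwise ergodic theorem) the measurable hull of the orbit partition refines the $s$-ergodic decomposition. Then it runs the connectedness-plus-finite-permutation argument with $A$ in place of $G^\xi$: since $A$ \emph{does} preserve $\mu'$ and normalizes $A'$, it genuinely permutes the finitely many $A'$-ergodic components inside each $A$-ergodic component, and connectedness of $A$ forces the permutation to be trivial, giving $\mathcal E_A=\mathcal E_{A'}$. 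If you want to repair your argument, the cleanest fix is exactly this: keep your Step~1, then use connectedness of $A$ (not $G^\xi$) to conclude $\mathcal E_A=\mathcal E_{A'}$, and finally invoke the contracting-foliation step to get $\eta^\xi$ refining $\mathcal E_A$.
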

\begin{proof}

	Let $\E_A$ denote the partition into  ergodic components of $ \mu'$ with respect to the action of $A$ on $( M^\alpha)'$.  Taking $s\in A$ such that $\xi(s)<0$, it follows that the partition of $(M^\alpha)'$ into $G^\xi$-orbits defines a (uniformly) contracting foliation of $(M^\alpha)'$ under the action $\td \alpha (s)$.  
By the pointwise ergodic theorem, it follows  that the partition of $(( M^\alpha)',  \mu') $ into $\td \alpha(s)$-ergodic components is refined by $\eta^\xi$.  To complete the proof of the claim we show  $\E_A = \E_{A'}$.   

	
Fix $ p'\in ( M^\alpha)'$ and let $ (\mu')^{\E_A}_{ p'}$ be the $A$-ergodic component of $ \mu'$ containing $ p'$.  
Let $\td  \E( p')$ denote the partition into $A'$-ergodic components of $(( M^\alpha)',  (\mu')^{ \E_A}_{ p'})$.  
We claim that $\td  \E( p')$ is finite for almost every $ p'$.  
	Indeed first note that, as both $A$ and $A'$ act ergodically on $G'/\Gamma$, the $A$ and $A'$-ergodic components of $(( M^\alpha)',  \mu')$ project to the Haar measure on $G'/\Gamma.$
	Furthermore, as the fiber conditional measures $( \mu')_{ g' \Gamma}$ are purely atomic 
	and as the ergodic components of the $A'$-action on $(( M^\alpha)', ( \mu')^{ \E_A}_{ p'})$  
	are mutually singular,  it follows that the partition $\td  \E( p')$ is finite. 

 As $A'\subset A$ with $A$ abelian, it follows that $A$ permutes elements of the partition $\td \E( p')$ of $(( M^\alpha)',  (\mu')^{ \E_A}_{ p'})$.    Note that the partition $\td \E( p')$ is finite, $A$ acts ergodically on $(( M^\alpha)', ( \mu')^{\E_A}_{ p'})$, and   $A$ is a connected group.   In particular, $A$ acts ergodically on the (finite) factor measure space $(( M^\alpha)', ( \mu')^{\E_A}_{ p'})/ \td \E( p')$.   This yields a contradiction unless  the partition $\td \E( p')$ contains only one element.  
 It follows that $ \E_A =  \E_{A'}$.   
\end{proof}

Now consider $\td \eta^\xi$, the measurable hull of the partition into $G^\xi$-orbits on $(M^\alpha, \mu)$ and $\td \E_{A'}$, the partition of $(M^\alpha, \mu)$ into $A'$-ergodic components.  We claim $\td \eta^\xi$ refines $\td  \E_{A'}$.  Indeed, since $C$ centralizes  $A'$, $C$ permutes  elements of $\td  \E_{A'}$.  Similarly, $C$ centralizes  $G^\xi$ and hence permutes elements of $\td \eta^\xi$.  Moreover,  $[\mu^{G^\xi}_p] = [\mu^{G^\xi}_{\td \alpha(g)(p)}] $ for $g\in C$.   
If  elements of $\td  \E_{A'}$ did not contain full $G^\xi$-orbits modulo $\mu$, then the $C$-orbits of elements of  $\td \E_{A'}$ would not  contain $C$-orbits of full $G^\xi$-orbits modulo $\mu$ contradicting the above claim.  

We now consider the $A'$-action on $M^\alpha$.  As $A'$ acts isometrically on $G^\xi$-orbits and as $\td \eta^\xi$ refines $\td  \E_{A'}$, from  standard   measure rigidity  arguments  for actions  of Abelian groups we obtain the following.    

\begin{claim}\label{clm:43}$\mu$ is invariant under the  action   $G^\xi$.  \end{claim}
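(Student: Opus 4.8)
The plan is to combine the recurrence provided by the fact that $\td\eta^\xi$ refines $\td\E_{A'}$ (established just above Claim \ref{clm:43}) with a measure rigidity argument for the abelian group $A'$, exploiting that $A'$ acts isometrically along the $G^\xi$-orbits. Concretely, I would work with the conditional measures $[\mu^{G^\xi}_p]$ and show that they must be Haar; once that is done, Claim \ref{claim:kklloopp} gives that $\mu$ is $G^\xi$-invariant (note $G^\xi$ is nilpotent, hence unimodular, so left Haar equals right Haar and "Haar" is unambiguous).

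First I would fix an $A'$-ergodic component $\mu^e$ of $\mu$ (these form the partition $\td\E_{A'}$), and recall that by construction $\td\eta^\xi$ refines $\td\E_{A'}$; this means that, modulo $\mu$, each $A'$-ergodic component is a union of full $G^\xi$-orbits. In particular, for $\mu^e$-a.e. $p$ and $\mu^e$-a.e. $q = \td\alpha(v)(p)$ with $v\in G^\xi$, the point $q$ lies in the same $A'$-ergodic component, and the conditional measures satisfy the compatibility relation $[\mu^{G^\xi}_q] = [(L_v)_*\mu^{G^\xi}_p]$ coming from the general transitivity/translation property of leafwise conditional measures along a fixed subgroup orbit. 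The key dynamical input is that every $s\in A'$ satisfies $\beta(s) = 0$ for all $\beta\in\xi$, so $\td\alpha(s)$ acts by isometries on $G^\xi$-orbits (the adjoint action of $s$ on $\lieg^\xi$ is trivial). Thus $\td\alpha(s)_* [\mu^{G^\xi}_p] = [\mu^{G^\xi}_{\td\alpha(s)(p)}]$ with no scaling distortion of the $G^\xi$-parameter.

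The heart of the argument is then the standard measure-rigidity dichotomy for an ergodic action of the abelian (indeed, just one-parameter suffices, but we use the full $A'$) group $A'$: given a measurable family of locally finite measures $x\mapsto[\mu^{G^\xi}_x]$ on the abelian group $G^\xi$ that is $A'$-equivariant, with $A'$ acting on $G^\xi$ trivially (isometrically), ergodicity of $A'$ on the component $\mu^e$ together with Poincar\'e recurrence forces $[\mu^{G^\xi}_x]$ to be translation-invariant on $G^\xi$, i.e.\ Haar; the only alternative, that it be a non-Haar measure, is incompatible with the recurrence of a generic $G^\xi$-translate of $p$ back to a neighborhood of $p$ within the same ergodic component, since that recurrence propagates the translation-invariance. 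More precisely: by the refinement $\td\eta^\xi \succeq \td\E_{A'}$, almost every $v\in G^\xi$ gives a point $\td\alpha(v)(p)$ in the same $A'$-ergodic component as $p$; feeding this into the cocycle relation for conditional measures along $G^\xi$ shows $[\mu^{G^\xi}_p]$ is invariant under left translation by $v$ for $[\mu^{G^\xi}_p]$-a.e.\ $v$, and the set of such $v$ is then a closed subgroup of full measure, hence all of $G^\xi$. Since $\xi$ was chosen in Claim \ref{claim:nontrivialdirection} so that $[\mu^{G^\xi}_p]$ is non-atomic, this forces $[\mu^{G^\xi}_p]$ to be Haar on all of $G^\xi$. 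Applying Claim \ref{claim:kklloopp} (or Lemma \ref{prop:aaalll} with $V(x) = G^\xi$) then yields $G^\xi$-invariance of $\mu$, which is the desired contradiction to $G^\xi\not\subset\Qin(\mu)$.

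The main obstacle I expect is the careful bookkeeping in the measure-rigidity step: making rigorous the passage from "$\td\eta^\xi$ refines $\td\E_{A'}$" to "$[\mu^{G^\xi}_p]$ is left-invariant under a full-measure set of translations" requires invoking the functoriality of leafwise conditional measures under the group translation, together with a Fubini/Poincar\'e-recurrence argument inside a single ergodic component, and one must be slightly attentive because the $G^\xi$-orbit partition is itself non-measurable (only its hull is). This is exactly the type of "standard measure rigidity arguments for actions of Abelian groups" alluded to in the statement of Claim \ref{clm:43}, so the proof should cite the relevant machinery (e.g.\ \cite{MR2191228}, \cite{MR1989231}) rather than reproving it; the only genuinely new point over that literature is that the isometric (rather than merely non-expanding) behavior of $A'$ on $G^\xi$ makes the argument clean. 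A minor secondary point to verify is that non-atomicity of $[\mu^{G^\xi}_p]$ together with its (a posteriori) closed-subgroup invariance group really forces the invariance group to be all of $G^\xi$, which is immediate since a non-atomic locally finite measure on a nilpotent Lie group invariant under a positive-dimensional closed subgroup and, by the recurrence argument, under a set of translations of positive measure in itself, must be Haar.
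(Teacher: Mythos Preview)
Your overall strategy matches the paper's: use the isometric action of $A'$ along $G^\xi$-orbits together with the refinement $\td\eta^\xi \succeq \td\E_{A'}$ to run a recurrence argument and conclude that $[\mu^{G^\xi}_p]$ is Haar on some closed subgroup $V(p)\subset G^\xi$. However, there is a genuine gap at the end. You assert that the closed subgroup of translations preserving $[\mu^{G^\xi}_p]$, having full $[\mu^{G^\xi}_p]$-measure, must equal $G^\xi$; but full measure only tells you that $[\mu^{G^\xi}_p]$ is \emph{supported} on this subgroup, not that the subgroup is all of $G^\xi$. Non-atomicity guarantees $V(p)$ is positive-dimensional, nothing more: whenever $\dim\lieg^\xi \ge 2$ (e.g.\ a root space of multiplicity $>1$, or $\xi = \{\beta, 2\beta\}$ in type $BC$), a non-atomic Haar measure on a proper one-dimensional subgroup is a perfectly consistent outcome of the rigidity step. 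Your last paragraph flags this as a ``minor secondary point'' but the justification offered there is circular.

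The paper closes this gap not by strengthening the rigidity argument but by invoking Lemma~\ref{prop:aaalll}: once you know $[\mu^{G^\xi}_p]$ is Haar on some nontrivial, $A$-invariant, measurably-varying $V(p)\subset G^\xi$, the $P'$-invariance of $\mu$ (in particular invariance under $G^{-\xi}$) together with the $\mathfrak{sl}(2,\R)$-triple structure forces invariance under all of $G^\xi$. You do mention Lemma~\ref{prop:aaalll}, but only with $V(x)=G^\xi$, which presupposes what you are trying to prove; the whole point of that lemma is precisely to upgrade from a possibly proper $V(p)$ to the full coarse root group. As a small aside: $G^\xi$ is nilpotent but need not be abelian---when $\xi=\{\beta,2\beta\}$ the algebra $\lieg^\xi$ is two-step nilpotent---so the phrase ``the abelian group $G^\xi$'' in your proposal should be corrected.
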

We only outline the main steps in the proof of Claim \ref{clm:43}.

\begin{proof}[Proof of Claim \ref{clm:43}]
Fix  $U\subset G^\xi$  a pre-compact, open neighborhood of the identity in $G^\xi$.  Given almost every $p\in M^\alpha$,  the measure $\mu^{G^\xi}_p$ gives positive mass to $U$.  For such $p$,  normalize $\mu^{G^\xi}_p$ on $U$.

Let $A'$ be as above.  Then any $s\in A'$ commutes with $G^\xi$ whence 
 $C_s(U) := sUs\inv= U$ and  $\mu^{G^\xi}_{\td \alpha(s)(p)} = \mu^{G^\xi}_{p}$.  
Let $K\subset M^\alpha$ be a compact set on which the assignment $p\mapsto \mu^\xi_p$ is continuous (where locally finite measure on $G^\xi$ are endowed with the topology dual to compactly supported continuous functions.)   

Consider a generic $p\in K$.  Recall that  the $\alpha(G^\xi)$-orbit of $p$ is contained in the $A'$-ergodic component of $\mu$  containing $p$.  Consider any $p'\in \td \alpha(G^\xi)(p)\cap K$   that is a density point of $K$ with respect to the $A'$-ergodic component of $\mu$ containing $p$.   
It follows that there is a sequence $s_k\in A'$ with 
\begin{enumerate}
\item $\td \alpha(s_k)(p)\in K$ for every $k\in \N$;
\item $\td \alpha(s_k)(p)\to p'$ as $k\to \infty$;
\item   $\mu^\xi_p = \mu^\xi_{\td \alpha(s_k)(p)}$ for every $k\in \N$. 
\end{enumerate}
It follows that $\mu^\xi_p = \mu^\xi_{p'}$. Taking sets $K_j$ as above of measure arbitrarily close to 1, for typical points $p$, it follows that $\mu^\xi_p= \mu^\xi_{p'}$ for all $p'= \td \alpha(v)(p)$ for a $\mu^{G^\xi}_p$-conull set of $v$.  
It follows that for almost every $p$, the group of isometries of  $G^\xi$ preserving $\mu^{G^\xi}_p$ up to normalization acts transitively on the support of $\mu^{G^\xi}_p$ in $G^\xi.$  In fact, the group of  {right}-translations of  $G^\xi$ preserving $\mu^{G^\xi}_p$ up to normalization acts transitively on the support of $\mu^{G^\xi}_p$ in $G^\xi.$

It now follows from arguments developed in  \cite[Section 5]{MR1406432}   that $[\mu^{G^\xi}_p]$ coincides with the Haar measure on  a non-trivial subgroup $V(p)\subset G^\xi$.  See also \cite[Section 6.1]{MR2122918} for an argument in the framework described here.  
Moreover, the assignment $p\mapsto V(p)$ is measurable and constant on $A$-orbits.  From  Proposition \ref{prop:aaalll} it follows that $\mu$ is $G^\xi$-invariant.  \end{proof}

Recall our initial choice of $\xi$ was such that  $G^\xi\not\subset Q= \Qin(\mu)$.  From this contradiction we conclude that $\Qin(\mu) = \Qout(\mu)$.   This completes the proof of   Proposition \ref{thm:resonant}

\subsection{Proof of  Proposition \ref{thm:High entropy}} 
The proof of Proposition \ref{thm:High entropy} is a direct application of the Theorem \ref{thm:HE}.  Recall the definitions of $\Sigma_Q$ and $ \Sigma ^\perp _Q$ above.  
Suppose that $\Qin(\mu)\neq \Qout(\mu)$.  Let $\xi \in \hat \Sigma$ be as in Claim \ref{claim:nontrivialdirection}.  Then $\xi \subset \Sigma ^\perp_{\Qin(\mu)}$. 
Write $Q = \Qin(\mu)$.

If $\xi$ contains two elements, we have $\xi = \{\beta', 2 \beta'\}$ for some   root $\beta'\in \Sigma_-$.  In the latter case, take $\beta = 2 \beta'$ if $[\mu^{G^\xi}_p]$ is supported on $G^{2\beta'}$ for almost every $p$ and $\beta = \beta'$ otherwise.  If $\xi$ is a single root take $\beta$ with $\xi= \{\beta\}$.

We claim
\begin{claim}
If $\Qin(\mu)$ is maximal  then, with  $\beta$ as above, there is a nonzero root $\gamma \in  \Sigma_{\Qin(\mu)}$ with  
\begin{enumerate}
	\item $\gamma\neq -c \beta$, for any $c>0$; 
	\item $\gamma+ \beta \in \Sigma$;   
	\item $\gamma+ \beta \in \Sigma^\perp_{\Qin(\mu)}.$
\end{enumerate} 
\end{claim}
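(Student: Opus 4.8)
The plan is to translate the statement into a combinatorial fact about the $\mathbb{Z}$‑grading of $\lieg$ determined by $Q = \Qin(\mu)$, and then argue by cases on the root system. First I would reduce to $\lieg$ simple: since $Q$ is a maximal proper parabolic of $G=\prod_i G_i$, it has the form $Q_{i_0}\times\prod_{i\neq i_0}G_i$ with $Q_{i_0}\subsetneq G_{i_0}$ a maximal proper parabolic, so $\Sigma^\perp_Q$ lies inside the irreducible root system $\Sigma_{i_0}$ of $\lieg_{i_0}$, $\beta\in\Sigma_{i_0}$, and any root $\gamma\in\Sigma_{i_0}$ with the three asserted properties relative to $Q_{i_0}$ automatically has them relative to $Q$ (the root systems of distinct factors are orthogonal, and $\Sigma_{Q_{i_0}}\subset\Sigma_Q$). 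As $\lieg$ has no rank‑$1$ simple ideals, $\lieg_{i_0}$ has $\R$‑rank $\ge 2$, so its Dynkin diagram is connected with at least two nodes. (This is where that hypothesis is essential: for a rank‑$1$ factor with $\Sigma_{i_0}=\{\pm\alpha_j\}$ and $\beta=-\alpha_j$ there is no admissible $\gamma$, and the claim fails.) So assume henceforth $\lieg$ simple and $Q=\lieq_{\Pi\sm\{\alpha_j\}}$. Writing $c_j(\rho)$ for the coefficient of $\alpha_j$ in $\rho\in\Sigma$, the description \eqref{eq:parabolic} of $\lieq_{\Pi\sm\{\alpha_j\}}$ gives
$$\Sigma_{\Qin(\mu)}=\{\rho\in\Sigma:c_j(\rho)\ge 0\},\qquad \Sigma^\perp_{\Qin(\mu)}=\{\rho\in\Sigma:c_j(\rho)< 0\}.$$
Thus, with $m:=-c_j(\beta)\ge 1$, it suffices to produce a root $\gamma$ with $0\le c_j(\gamma)\le m-1$, with $\gamma+\beta\in\Sigma$, and with $\gamma$ not a negative multiple of $\beta$: indeed $c_j(\gamma)\ge 0$ gives $\gamma\in\Sigma_{\Qin(\mu)}$ and $c_j(\gamma+\beta)=c_j(\gamma)-m<0$ gives $\gamma+\beta\in\Sigma^\perp_{\Qin(\mu)}$.

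Let $\Sigma_L:=\Sigma\cap\mathrm{Span}(\Pi\sm\{\alpha_j\})$ (the roots with $c_j=0$). \emph{First case:} there is $\gamma_0\in\Sigma_L$ with $\beta+\gamma_0\in\Sigma$; this occurs whenever $\beta$ is not ``isolated'' for $\Sigma_L$, since if some string element $\beta\pm\gamma_0$ with $\gamma_0\in\Sigma_L$ lies in $\Sigma$ one may replace $\gamma_0$ by $-\gamma_0$ if necessary. Take $\gamma=\gamma_0$: then $c_j(\gamma)=0$, $\gamma+\beta=\beta+\gamma_0\in\Sigma$ is nonzero because $c_j(\gamma+\beta)=c_j(\beta)<0$, and $\gamma$ is not a negative multiple of $\beta$ since $c_j(-c\beta)=cm>0$ for $c>0$.

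\emph{Remaining case:} $\beta$ is isolated for $\Sigma_L$, i.e.\ $\beta\pm\gamma_0\notin\Sigma$ for every $\gamma_0\in\Sigma_L$. Then for each $\alpha_i\in\Pi\sm\{\alpha_j\}$ the $\alpha_i$‑string through $\beta$ reduces to $\{\beta\}$, whence $\langle\beta,\alpha_i^\vee\rangle=0$; so $\beta$ is orthogonal to $\mathrm{Span}(\Pi\sm\{\alpha_j\})$ and therefore $\beta=t\,\omega_j$ for the fundamental weight $\omega_j$ and some $t\neq 0$. Since the coefficients of a fundamental weight in the simple‑root basis are positive (the inverse Cartan matrix being positive), $c_j(\omega_j)>0$, so $c_j(\beta)<0$ forces $t<0$, and hence $\langle\beta,\alpha_j^\vee\rangle=t\,\langle\omega_j,\alpha_j^\vee\rangle=t<0$. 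Put $\rho:=-\beta=-t\,\omega_j$; since $c_j(\rho)>0$ it is a positive root, $\langle\rho,\alpha_j^\vee\rangle=-t>0$, so $\rho-\alpha_j\in\Sigma$, and $\rho\neq\alpha_j$ (otherwise $\alpha_j\propto\omega_j$ would be orthogonal to all other simple roots, impossible in a connected diagram with $\ge 2$ nodes). If $m=1$ then $c_j(\rho-\alpha_j)=0$, so $\rho-\alpha_j\in\Sigma_L$, and $\beta+(\rho-\alpha_j)=-\alpha_j\in\Sigma$ contradicts isolation; hence $m\ge 2$. Now take $\gamma:=\alpha_j$: then $1=c_j(\gamma)\le m-1$; since $\langle\beta,\alpha_j^\vee\rangle<0$ the $\alpha_j$‑string through $\beta$ extends upward, so $\gamma+\beta=\beta+\alpha_j\in\Sigma$; and $\gamma=\alpha_j$ is not a negative multiple of $\beta=t\omega_j$ (with $t<0$), for otherwise $\alpha_j$ would be a positive multiple of $\omega_j$, again forcing $\alpha_j$ to be an isolated node.

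The main obstacle is the isolated case: one must recognize that isolation with respect to $\Sigma_L$ is equivalent to $\beta$ being proportional to $\omega_j$, then rule out $m=1$, and finally check that $\alpha_j$ itself is an admissible $\gamma$ — together with the reduction to a simple factor, which is precisely where the ``no rank‑$1$ ideals'' hypothesis enters (and without which the claim would be false).
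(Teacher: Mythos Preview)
Your proof is correct, but it is organized differently from the paper's.  Both arguments reduce to a single simple factor and use the description $\Sigma_Q=\{c_j\ge 0\}$, $\Sigma_Q^\perp=\{c_j<0\}$ for $Q=\lieq_{\Pi\sm\{\alpha_j\}}$.  The paper then does a direct case split on $\beta$: if $\beta=-\alpha_j$ (or $-2\alpha_j$ in type $BC$) it takes $\gamma=-\hat\alpha$ for an adjacent simple root $\hat\alpha$; otherwise it picks a simple $\alpha'$ with $\beta+\alpha'\in\Sigma_-$ and sets $\gamma=\alpha'$ (if $\alpha'\neq\alpha_j$) or $\gamma=-(\beta+\alpha_j)$ (if $\alpha'=\alpha_j$).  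Your dichotomy is instead based on whether $\beta$ is \emph{isolated} from the Levi root system $\Sigma_L=\{c_j=0\}$: in the non-isolated case you find $\gamma\in\Sigma_L$ directly, and in the isolated case you observe that $\beta$ must be a negative multiple of the fundamental weight $\omega_j$, rule out $m=1$ by producing a contradiction to isolation via $\rho-\alpha_j\in\Sigma_L$, and then take $\gamma=\alpha_j$.  Your approach is a bit more structural---the identification of isolated $\beta$ with multiples of $\omega_j$ is a clean observation---while the paper's argument is more elementary and avoids invoking fundamental weights or the positivity of the inverse Cartan matrix.  The two case splits do not align (the paper's $\beta=-\alpha_j$ case falls into your non-isolated case, and your isolated case is a strict subcase of the paper's ``$\beta$ not simple'' case), but each covers all possibilities.
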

\begin{proof}
Indeed let $\lieq= \Lie(\Qin(\mu))$.   Then $\lieq = \lieq_{\Pi \sm \{\alpha\}}$ for some simple root $\alpha$.  If $\beta = -\alpha$ then, as we assume the are no rank-1 simple ideals,  there is   simple positive root $\hat \alpha\neq - \alpha$ adjacent to $\alpha$ in the Dynkin diagram corresponding to the simple factor containing $\alpha$.  Then  $\hat \alpha - \beta = \hat \alpha + \alpha $ is a root.  Take $\gamma = - \hat \alpha$.  Then (since  $\lieq$ is of the form $  \lieq_{\Pi \sm \{\alpha\}}$) $\gamma =-\hat \alpha \in  \Sigma_{\Qin(\mu)}$ and 
 $\gamma+ \beta \in \Sigma^\perp_{\Qin(\mu)}.$  Similarly, if $\beta = -2\alpha$ (so that $\beta$ is a root in factor of type $BC_n$) then $\alpha $ is the right-most root in the Dynkin diagram; with $\hat \alpha$ the root adjacent  to (that is, to the left of) $\alpha$, since $\hat \alpha + 2 \alpha$ is a root, $\gamma=-\hat \alpha$ satisfies the conclusions of the claim.

If $\beta\neq -\alpha$ and $\beta\neq -2\alpha$  then $\beta$ is of the form $$\beta = c_\alpha \alpha + \sum _{\hat \alpha \neq \alpha\in \Pi} c_{\hat \alpha} \hat \alpha$$
where $c_\alpha <0$,  $c_{\hat \alpha} \le 0$, and $  \sum _{\hat \alpha \neq \alpha\in \Pi} c_{\hat \alpha} \ge 1.$
 Since $\beta$ is not a simple negative root,  there is a simple (positive) root $\alpha'\in \Pi$ such that $\beta + \alpha'$ is a negative root.  If $\alpha'\neq \alpha$ then, since $\beta=  (\beta + \alpha') - \alpha'$ and $-\alpha' \in \Sigma_{\Qin(\mu)}$, 
 it follows that $$(\beta + \alpha') \notin \Sigma_{\Qin(\mu)}$$ since $\Qin(\mu)$ is a subgroup.  Then $\gamma= \alpha'$ satisfies the conclusion of the claim.
 On the other hand, if $\alpha'= \alpha$ then, since $\beta + \alpha $ is a negative root, $$ -(\beta + \alpha) \in \Sigma_+\subset  \Sigma_{\Qin(\mu)}$$ and 
 $$\beta+ -(\beta + \alpha) = -\alpha  \notin \Sigma_{\Qin(\mu)}.$$ Since $\beta$ is linearly independent of $\alpha$,  the root  $\gamma= -(\beta + \alpha)$ satisfies the conclusion of the claim. 
\end{proof}

As we assume that $\gamma \neq-c\beta $ for $c>0$, it follows that we may find $s\in A$ with $\beta(s)<0$ and $\gamma(s)<0$.  
Let $\lieh$ be the Lie subalgebra generated by $\lieg^\xi\oplus \lieg^{[\gamma]}$ where $[\gamma]$ is the coarse equivalence class of $\gamma$.   
Then $H= \exp(\lieh)$ is the minimal subgroup containing $G^\xi$ and  $G^{[\gamma]}$ that is contracted by all $s$  with $\beta(s)<0$ and $\gamma(s)<0$.
For  $p\in M^\alpha$, let $H_p\subset S_p\subset H$  be the subgroups guaranteed by  Theorem \ref{thm:HE}.

Let $\bar \beta = \beta +\gamma $. As $\bar \beta \in \Sigma$, from our choice of $\beta$ and that $\gamma\in \Sigma_Q$ there are 
$g\in G^\beta \cap S_p$ and $h\in G^{\gamma}\cap S_p$ that do not commute.  Theorem \ref{thm:HE} 
implies that $\lie(H_p)$  contains a nontrivial intersection with $\lieg^{\bar \beta} = [\lieg^\beta , \lieg^\gamma]$. 
In particular, one can find a measurable $A$-invariant family of subgroups $V(x) \subset G^{[\bar \beta]}$ satisfying the hypotheses of Lemma \ref{prop:aaalll}.
From Lemma \ref{prop:aaalll}, it follows that  the measure $\mu$ is $G^{[\bar \beta]}$-invariant contradicting the choice of $\gamma$.  
This completes the proof of Proposition \ref{thm:High entropy}.

\newpage 
\appendix
\section{Tables of root data for classical root systems} 
A table of simple roots and all positive roots is given in Table \ref{table:1}.  We express the roots in terms of a standard presentation (c.f. \cite{MR1920389}[Appendix C].)
In all cases, the parabolic subalgebra $\lieq$ of minimal resonant codimension is  $\lieq = \lieq_{\Pi\sm \{\alpha_1\}}$ from which we immediately verify   $r(\lieg)$ in Example \ref{ex:1} from Table \ref{table:1}.
We also verify that  $m(\lieg)$ is  the  resonant codimension of $\lieq = \lieq_{\Pi\sm \{\alpha_1, \alpha_2\}}$ except for $D_4$ from which we verify $m(\lieg)$ in  Examples \ref{ex:2}. 

\DynkTablePosRoots

\bibliographystyle{AWBmath}

\bibliography{bibliography}

\begin{thebibliography}{BRHW}

\bibitem[AV]{MR2651382}
A.~Avila and M.~Viana.
\newblock {Extremal {L}yapunov exponents: an invariance principle and
  applications}.
\newblock {\em Invent. Math.}, 181(1):115--189, 2010.

\bibitem[BC]{MR1179170}
T.~Bogensch{\"u}tz and H.~Crauel.
\newblock {The {A}bramov-{R}okhlin formula}.
\newblock In {\em Ergodic theory and related topics, {III} ({G}\"ustrow,
  1990)}, volume 1514 of {\em Lecture Notes in Math.}, pages 32--35. Springer,
  Berlin, 1992.

\bibitem[BJ]{MR2189882}
A.~Borel and L.~Ji.
\newblock {\em Compactifications of symmetric and locally symmetric spaces}.
\newblock Mathematics: Theory \& Applications. Birkh\"auser Boston, Inc.,
  Boston, MA, 2006.

\bibitem[BFH]{1608.04995}
A.~Brown, D.~Fisher, and S.~Hurtado.
\newblock {Zimmer's conjecture: Subexponential growth, measure rigidity, and
  strong property (T)}.
\newblock {\em Preprint}, 2016.
\newblock arXiv:1608.04995.

\bibitem[BRH]{AWB-GLY-P1}
A.~Brown and F.~Rodriguez~Hertz.
\newblock {Smooth ergodic theory of $\Z^d$-actions part 1: Lyapunov exponents,
  dynamical charts, and coarse Lyapunov manifolds}.
\newblock {\em Preprint}, 2016.
\newblock arXiv:1610.09997.

\bibitem[BRHW]{AWB-GLY-P3}
A.~Brown, F.~Rodriguez~Hertz, and Z.~Wang.
\newblock {Smooth ergodic theory of $\Z^d$-actions part 3: Product structure of
  entropy}.
\newblock {\em Preprint}, 2016.
\newblock arXiv:1610.09997.

\bibitem[BM]{MR1911660}
M.~Burger and N.~Monod.
\newblock {Continuous bounded cohomology and applications to rigidity theory}.
\newblock {\em Geom. Funct. Anal.}, 12(2):219--280, 2002.

\bibitem[EK1]{MR1989231}
M.~Einsiedler and A.~Katok.
\newblock {Invariant measures on {$G/\Gamma$} for split simple {L}ie groups
  {$G$}}.
\newblock {\em Comm. Pure Appl. Math.}, 56(8):1184--1221, 2003.
\newblock {\em Dedicated to the memory of J{\"u}rgen K. Moser}.

\bibitem[EK2]{MR2191228}
M.~Einsiedler and A.~Katok.
\newblock {Rigidity of measures---the high entropy case and non-commuting
  foliations}.
\newblock {\em Israel J. Math.}, 148:169--238, 2005.
\newblock {\em Probability in mathematics}.

\bibitem[EM]{1302.3320}
A.~Eskin and M.~Mirzakhani.
\newblock {Invariant and stationary measures for the SL(2,R) action on Moduli
  space}.
\newblock 2013.
\newblock arXiv:1302.3320.

\bibitem[FS]{MR1666834}
B.~Farb and P.~Shalen.
\newblock {Real-analytic actions of lattices}.
\newblock {\em Invent. Math.}, 135(2):273--296, 1999.

\bibitem[Fis]{F11}
D.~Fisher.
\newblock {Groups acting on manifolds: around the {Z}immer program}.
\newblock In {\em Geometry, rigidity, and group actions}, Chicago Lectures in
  Math., pages 72--157. Univ. Chicago Press, Chicago, IL, 2011.

\bibitem[FH]{MR2219247}
J.~Franks and M.~Handel.
\newblock {Distortion elements in group actions on surfaces}.
\newblock {\em Duke Math. J.}, 131(3):441--468, 2006.

\bibitem[Ghy]{MR1703323}
{\'E}.~Ghys.
\newblock {Actions de r\'eseaux sur le cercle}.
\newblock {\em Invent. Math.}, 137(1):199--231, 1999.

\bibitem[KS1]{MR2122918}
B.~Kalinin and R.~Spatzier.
\newblock {Rigidity of the measurable structure for algebraic actions of
  higher-rank {A}belian groups}.
\newblock {\em Ergodic Theory Dynam. Systems}, 25(1):175--200, 2005.

\bibitem[KS2]{MR1406432}
A.~Katok and R.~J. Spatzier.
\newblock {Invariant measures for higher-rank hyperbolic abelian actions}.
\newblock {\em Ergodic Theory Dynam. Systems}, 16(4):751--778, 1996.

\bibitem[KRH]{MR2729332}
A.~Katok and F.~Rodriguez~Hertz.
\newblock {Measure and cocycle rigidity for certain nonuniformly hyperbolic
  actions of higher-rank abelian groups}.
\newblock {\em J. Mod. Dyn.}, 4(3):487--515, 2010.

\bibitem[Kna]{MR1920389}
A.~W. Knapp.
\newblock {\em Lie groups beyond an introduction}, volume 140 of {\em Progress
  in Mathematics}.
\newblock Birkh\"auser Boston, Inc., Boston, MA, second edition, 2002.

\bibitem[Led1]{MR743818}
F.~Ledrappier.
\newblock {Propri\'et\'es ergodiques des mesures de {S}ina\"\i}.
\newblock {\em Inst. Hautes \'Etudes Sci. Publ. Math.}, (59):163--188, 1984.

\bibitem[Led2]{MR850070}
F.~Ledrappier.
\newblock {Positivity of the exponent for stationary sequences of matrices}.
\newblock In {\em Lyapunov exponents ({B}remen, 1984)}, volume 1186 of {\em
  Lecture Notes in Math.}, pages 56--73. Springer, Berlin, 1986.

\bibitem[LY]{MR819556}
F.~Ledrappier and L.-S. Young.
\newblock {The metric entropy of diffeomorphisms. {I}. {C}haracterization of
  measures satisfying {P}esin's entropy formula}.
\newblock {\em Ann. of Math. (2)}, 122(3):509--539, 1985.

\bibitem[LS]{MR693976}
F.~Ledrappier and J.-M. Strelcyn.
\newblock {A proof of the estimation from below in {P}esin's entropy formula}.
\newblock {\em Ergodic Theory Dynam. Systems}, 2(2):203--219 (1983), 1982.

\bibitem[LW]{MR0476995}
F.~Ledrappier and P.~Walters.
\newblock {A relativised variational principle for continuous transformations}.
\newblock {\em J. London Math. Soc. (2)}, 16(3):568--576, 1977.

\bibitem[Lor]{MR0033449}
G.~G. Lorentz.
\newblock {Some new functional spaces}.
\newblock {\em Ann. of Math. (2)}, 51:37--55, 1950.

\bibitem[LMR]{MR1828742}
A.~Lubotzky, S.~Mozes, and M.~S. Raghunathan.
\newblock {The word and {R}iemannian metrics on lattices of semisimple groups}.
\newblock {\em Inst. Hautes \'Etudes Sci. Publ. Math.}, (91):5--53 (2001),
  2000.

\bibitem[Mar]{MR1090825}
G.~A. Margulis.
\newblock {\em Discrete subgroups of semisimple {L}ie groups}, volume~17 of
  {\em Ergebnisse der Mathematik und ihrer Grenzgebiete (3) [Results in
  Mathematics and Related Areas (3)]}.
\newblock Springer-Verlag, Berlin, 1991.

\bibitem[MT]{MR1253197}
G.~A. Margulis and G.~M. Tomanov.
\newblock {Invariant measures for actions of unipotent groups over local fields
  on homogeneous spaces}.
\newblock {\em Invent. Math.}, 116(1-3):347--392, 1994.

\bibitem[NZ1]{MR1720183}
A.~Nevo and R.~J. Zimmer.
\newblock {Homogenous projective factors for actions of semi-simple {L}ie
  groups}.
\newblock {\em Invent. Math.}, 138(2):229--252, 1999.

\bibitem[NZ2]{MR1933077}
A.~Nevo and R.~J. Zimmer.
\newblock {A structure theorem for actions of semisimple {L}ie groups}.
\newblock {\em Ann. of Math. (2)}, 156(2):565--594, 2002.

\bibitem[Pol]{MR1946555}
L.~Polterovich.
\newblock {Growth of maps, distortion in groups and symplectic geometry}.
\newblock {\em Invent. Math.}, 150(3):655--686, 2002.

\bibitem[Wit]{MR1198459}
D.~Witte.
\newblock {Arithmetic groups of higher {${\bf Q}$}-rank cannot act on
  {$1$}-manifolds}.
\newblock {\em Proc. Amer. Math. Soc.}, 122(2):333--340, 1994.

\bibitem[Zim]{MR776417}
R.~J. Zimmer.
\newblock {\em Ergodic theory and semisimple groups}, volume~81 of {\em
  Monographs in Mathematics}.
\newblock Birkh\"auser Verlag, Basel, 1984.

\end{thebibliography}

\end{document}